\pgfplotsset{compat=1.18}
\newcommand{\Lip}{{\mathrm{Lip}}}
\newcommand{\LL}{{\mathrm{LL}}}
\newcommand{\be}{\begin{eqnarray}}
\newcommand{\ee}{\end{eqnarray}}
\newcommand{\beq}{\begin{equation}}
\newcommand{\eeq}{\end{equation}}
\newcommand{\beqn}{\begin{equation*}}
\newcommand{\eeqn}{\end{equation*}}
\newcommand{\round}[1]{\lfloor#1\rfloor}
\newtheorem{thm}{Theorem}[section]
\newtheorem{prop}[thm]{Proposition}
\newtheorem{cor}[thm]{Corollary}
\newtheorem{lem}[thm]{Lemma}
\newtheorem{defn}[thm]{Definition}
\theoremstyle{remark}
\newtheorem{remark}[thm]{Remark}
\newtheorem{example}[thm]{Example}
\newcommand\sL{{\mathscr{L}}}
\newcommand\sM{{\mathscr{M}}}
\newcommand\cA{{\mathcal A}}
\newcommand\cB{{\mathcal B}}
\newcommand\cC{{\mathcal C}}
\newcommand\cE{{\mathcal E}}
\newcommand\cF{{\mathcal F}}
\newcommand\cH{{\mathcal H}}
\newcommand\cI{{\mathcal I}}
\newcommand\cL{{\mathcal L}}
\newcommand\cP{{\mathcal P}}
\newcommand\cQ{{\mathcal Q}}
\newcommand\bP{{\mathbb P}}
\newcommand\bR{{\mathbb R}}
\newcommand\bT{{\mathbb T}}
\newcommand\bZ{{\mathbb Z}}
\newcommand\fD{{\mathfrak D}}
\newcommand\fF{{\mathfrak F}}
\newcommand\fG{{\mathfrak G}}
\newcommand{\ve}{\varepsilon}
\def\bfE{\mathbf{E}}
\def\bfI{\mathbf{I}}
\begin{document}
	
\title[Brownian approximation of dynamical systems]{Error bounds in a smooth metric for Brownian approximation of dynamical systems via Stein's method}

\author[Juho Lepp\"anen]{Juho Lepp\"anen$^\dagger$}
\address{$^\dagger$Department of Mathematics, Tokai University, Kanagawa, 259-1292, Japan}
\email{leppanen.juho.heikki.g@tokai.ac.jp}

\author[Yuto Nakajima]{Yuto Nakajima$^\S$}
\email{yunakaji@mail.doshisha.ac.jp}
\address{$\S$Faculty of Science and Engineering, Doshisha University, Kyoto, 610-0394, JAPAN}
\author[Yushi Nakano]{Yushi Nakano$^\ddagger$}
\address{$^\ddagger$Faculty of Science, Hokkaido University, Hokkaido, 060-0810, Japan}
\email{yushi.nakano@math.sci.hokudai.ac.jp}

\keywords{Stein's method, diffusion approximation, functional 
central limit theorem, weak invariance principle, 
dynamical systems}
\thanks{2020 {\it Mathematics Subject Classification.} 60F17; 37A05, 37A50} % Suggesting these. 
\thanks{J. Lepp\"anen and Y. Nakajima were supported by JSPS via the project LEADER, and 
	Y. Nakajima by JSPS KAKENHI Grant Number 25K17282.
	Y. Nakano was supported by JSPS KAKENHI Grant Number 23K03188.
	J. Lepp\"anen would like to thank Mikko Stenlund for helpful correspondence.
	We thank the anonymous referees for their pertinent comments, which helped us correct errors and improve the paper.
	}

\maketitle

\begin{abstract}	
We adapt Stein's method of diffusion approximations, developed by Barbour, 
to the study of chaotic dynamical systems. We establish an error bound in the functional central limit theorem with respect to an integral probability metric of smooth test functions under a functional correlation decay bound. 
For systems with a sufficiently fast polynomial rate of correlation decay, the error bound is of order $O(N^{-1/2})$,  under an additional condition on the linear growth of variance.
Applications include a family of interval maps with neutral
fixed points and unbounded derivatives, and two-dimensional dispersing Sinai billiards.
\end{abstract}

\section{Introduction}

The functional central limit theorem (FCLT), also known as Donsker's invariance principle 
\cite{D51}, asserts that for a sequence $(X_n)_{n \ge 0}$ 
of i.i.d. random variables with zero mean and unit variance, the random process 
$$
W_N(t) = N^{-1/2} \sum_{n = 0}^{  \round{  N t  } - 1  } X_n, \quad t \in [0,1],
$$
converges in distribution to a standard Brownian motion 
$(Z(t))_{t \in [0,1]}$ 
with respect to the Skorokhod topology. Barbour \cite{B90} observed that Stein's method \cite{S72},  initially developed for error estimation in the central limit theorem, can be adapted 
to obtain rates of convergence in the FCLT with respect to integral probability metrics of sufficiently 
smooth test functions.

Consider an integral probability metric of the form 
\begin{align}\label{eq:integral_metric}
d_{\fG}(\mu_N, \nu) := \sup_{g \in \fG} |  \mu_N( g ) -  \nu(g)  |,
\end{align}
where $\fG$ is some class of real-valued test functions, $(\mu_N)_{N \ge 0}$ is a sequence 
of probability distributions, and $\nu$ is a known target distribution used to approximate 
$\mu_N$. Here, $\nu(g)$ denotes the expectation of $g$ with respect to $\nu$.
The core idea behind Stein's method can be summarized as follows:
to estimate \eqref{eq:integral_metric}, a linear operator $\cA$, called a Stein operator, together 
with a class of functions $\fF(\cA)$ is constructed such that 
(a) the identity $\nu( \cA f  ) = 0$ holds for  each $f \in \fF(\cA)$; and (b) for each 
$g \in \fG$, there exists a solution $f \in \fF(\cA)$ to the Stein equation
\begin{align}\label{eq:se-intro}
\cA f (w) = g(w) - \nu( g ).
\end{align}
Taking expectations reformulates the original problem of estimating \eqref{eq:integral_metric} into one of estimating 
$\sup_{f \in \fF(\cA)} | \mu_N ( \cA f ) |$, for which
various techniques have been developed in the extensive literature on 
Stein's method.
For continuous target distributions, these techniques commonly involve Taylor expansions, coupling methods, and Malliavin calculus.
A notable feature of Stein's method, particularly from the viewpoint of applications,
is its flexibility in handling various dependence structures
including local dependence \cite[Chapter 9]{CGS11} and weak dependence \cite[Chapter III]{P00}.
For further background on Stein's method and its applications in different probabilistic frameworks, we refer the reader to the monographs \cite{CGS11, NP12} and the surveys \cite{R11,LRS17}.

In the special case where the target distribution $\nu$ is the 
Wiener measure, Barbour considered a class $\sM$ (defined below in Section \ref{sec:spaces})
 of test functions
$g : D \to \bR$ on the space $D := D([0,1], \bR)$ of c\`{a}dl\`{a}g functions $w : [0,1] \to \bR$
equipped with the sup-norm, where $g$ is
twice Fr\'{e}chet 
differentiable with a Lipschitz continuous second derivative and satisfies appropriate growth constraints.
To construct a Stein operator $\cA$, Barbour used Markov generator theory, defining $\cA$ as the infinitesimal generator of a Markov process that solves an SDE with stationary distribution $\nu$. 
Using the transition semigroup associated with $\cA$, he identified the 
solution to \eqref{eq:se-intro} and analyzed its properties. 
Employing
Taylor expansions in combination with the leave-one-out approach, he derived an error bound of the form
\begin{align}\label{eq:error_intro}
| \bfE[ g( W_N ) ]  - \bfE[ g(Z) ] | \le C N^{-1/2} \Vert g \Vert ( \sqrt{  \log (N) } + \bfE[ |X_1|^3 ] ),
\end{align}
where $C > 0$ is an absolute constant and $\Vert g \Vert$ is a suitable norm of $g$. Recently, Kasprzak \cite{K20} extended Barbour's results \cite{B90} to the approximation of vector-valued processes by multivariate correlated Brownian motion, where the covariance matrices can be non-identity and the approximated process is allowed to have certain dependence structure. Barbour, Ross, and Zheng \cite{BRZ24} derived Gaussian smoothing inequalities that, in the case of error bounds such as \eqref{eq:error_intro} obtained via Barbour's method, allow the class of test functions to be extended at the cost of reduced precision in the estimates.

Our aim in this study is to adapt Barbour's technique to extend error bounds 
such as 
\eqref{eq:error_intro} to cases where the process
$(X_n)$ is generated by a deterministic 
dynamical system with good mixing properties.
We restrict our considerations to self-normed c\`{a}dl\`{a}g
paths $W_N(t)$ (see \eqref{eq:def_W}) and approximations by the univariate standard 
Brownian motion, with generalizations (such as those considered in \cite{K20})
left for future exploration\footnote{This choice allows us to use the definition of the Stein equation from \cite{B90} without modification.}.
Our study is, in spirit, a continuation of previous studies \cite{HLS20, LS20} that dealt with 
Gaussian approximation of dynamical systems using Stein's method.
An observation from Barbour's work \cite{B90} is that, 
a rate of convergence in Donsker's theorem follows with little added effort to the Gaussian 
case, once Stein's method for Brownian approximations has been properly set up. 
Based on results from the present study and those in \cite{HLS20, LS20}, we find that this observation continues to hold for a broad class of chaotic dynamical systems.

The problem of estimating rates of convergence in FCLTs for dynamical systems has been addressed in \cite{P24, AM19, LW24-1, DMR24, F23, F25, H23}. 
Using an explicit construction of an approximating Gaussian sequence together with a Fuk--Nagaev type deviation inequality, Dedecker, Merlev\`ede, and Rio \cite{DMR24} established rates of order $O(N^{-1/4}(\log N)^{1/4})$ for the Wasserstein-2 metric in the case of Young towers with return times to the base having a finite fourth moment. 
A series of works has been devoted to the adaptation of martingale approximation techniques, starting with Antoniou and Melbourne \cite{AM19}, who derived convergence rates with respect to the Prokhorov metric. 
For systems modeled by Young towers with exponential tails of return times to the base, such as planar periodic dispersing billiards and unimodal maps satisfying the Collet--Eckmann condition, their method yields rates of order $O(N^{-1/4+\delta})$ for arbitrarily small $\delta > 0$. 
For Young towers with polynomial tails, including interval maps with neutral fixed points as in the Pomeau--Manneville scenario \cite{PM80}, the rates depend on the \say{degree of nonuniformity.}
In a setting similar to \cite{AM19}, Liu and Wang \cite{LW24-1} obtained rates for Wasserstein-$p$ metrics, and Paviato \cite{P24} established rates in multidimensional FCLTs with respect to Prokhorov and Wasserstein-$1$ metrics.
In \cite{LW24-2}, Liu and Wang derived convergence rates for Wasserstein-$p$ metrics in the FCLT for uniformly expanding nonautonomous dynamical systems described by concatenations $T_n \circ \cdots \circ T_1$ of varying maps $T_i : X \to X$, such as those in the setting of Conze and Raugi \cite{CR07}. 
Their rates are $O(\sigma_N^{-1/2+\delta})$ for arbitrarily small $\delta > 0$, where $\sigma_N^2$ denotes the variance of the partial sum process. 
Under linear growth of $\sigma_N^2$ this corresponds to $O(N^{-1/4+\delta})$.

The primary contribution of our work is the method, which, under certain weak dependence criteria (see 
Definition \ref{defn:fcb})
called functional correlation bounds,
yields an error bound in a self-normed FCLT with respect to a metric as in \eqref{eq:integral_metric}.
The result applies to test functions $g : D \to \bR$ that belong to Barbour's class $\sM$ and satisfy 
an additional smoothness condition (see \eqref{eq:smooth}). The error decays at the rate $O(N^{-1/2})$, provided 
that the decay rate of functional correlations associated to separately Lipschitz functions
has a finite first moment and that
the variance of the partial sum $\sum_{n=0}^{N-1} X_n$ grows linearly as $N \to \infty$.
We consider two types of applications:
(1) stationary processes of the form 
$X_n = f \circ T^n$ with $T^n = T^{n-1} \circ T$, where $T : M \to M$ is a measure-preserving transformation 
of a probability space $(M, \cF, \mu)$ and $f : M \to \bR$ is a suitably regular observable; 
and (2) nonstationary processes of 
the form $X_n = f_n \circ T_n \cdots \circ T_1$, 
where $(T_n)$ is a 
deterministic/random
sequence of 
transformations and $(f_n)$ is a sequence of regular observables. 

Prior to our work, Fleming \cite{F23} addressed the problem of estimating the rate of convergence in FCLTs for ergodic measure-preserving 
transformations satisfying weak dependence criteria also called
functional correlation bounds, which are closely related yet different from those considered here (compare \cite[Definition 3.2.2]{F23} with \eqref{eq:fcb}).
Using an approach based on Bernstein's blocking technique, he derived convergence rates in the multivariate FCLT with respect to the Wasserstein-1 metric of Lipschitz continuous test functions, at best of order $O(N^{-1/4 + \delta})$, 
where the rate of convergence depends on the decay rate of the functional correlation bounds. 
In the recent work \cite{F25}, similar techniques are used to derive Wasserstein-$p$ rates in multivariate FCLTs. In the case of Young towers with superpolynomial tails, rates of order $O(n^{-\kappa})$ for all $\kappa < 1/4$ and $p < \infty$ are established.

Previously, functional correlation bounds were derived in 
\cite{LS20, L17, LS17} for piecewise uniformly expanding interval maps, Pomeau--Manneville-type
interval maps, and a class of dispersing Sinai billiards.
By combining these bounds with our abstract theorem
(Theorem \ref{thm:main}), we establish order $O(N^{-1/2})$ error bounds in FCLTs 
for these systems. To our knowledge, all of these results are new.
Additionally, we analyze an interval map 
$T : [-1,1] \to [-1,1] $ introduced by Pikovsky \cite{P91}, 
which features two neutral fixed points and an unbounded derivative.
The neutrality of the fixed points and the degree of the singularity 
are governed by a single parameter  $\gamma \in (1, \infty)$.
By \cite{CHMV10, CLM23}, this map 
admits a Young tower with a first return map whose tails 
decay at the rate $O(n^{- \gamma / ( \gamma - 1)})$. An immediate consequence 
(see e.g. \cite{MN05, AM19}) is that,
for $\gamma < 2$, the process 
$X_n = f \circ T^n$ where $f$ is Lipschitz continuous
satisfies 
the FCLT.
We prove that $(X_n)$
satisfies a functional correlation bound with a polynomial rate that matches the decay rate of usual auto-correlations from 
 \cite{CHMV10}. This leads to new error bounds in the FCLT for parameters $\gamma < 5/3$, 
 which decay at the order $O(N^{-1/2})$ when $\gamma < 3/2$.
  
 \begin{remark}
After circulating a preliminary version of this manuscript, we learned from Yeor Hafouta about the related interesting work in \cite[Chapter 1]{HK18}, which applies Barbour's method \cite{B90} in the context of weakly dependent processes. 
In particular, \cite[Theorem 1.6.2]{HK18} provides error bounds in the FCLT for nonconventional sums of 
the form $\sum_{n=1}^{  \round{Nt} } F( \xi_n, \xi_{2n}, \ldots, \xi_{\ell n})$ where 
$(\xi_n)$ satisfies certain stationarity and mixing conditions and $F$ is a sufficiently regular function. Dynamical applications include topologically mixing subshifts of finite type and systems 
with corresponding symbolic representations, such as smooth Axiom A diffeomorphisms.
While the metric used in \cite{HK18} does not impose the smoothness condition \eqref{eq:smooth}, the established error estimates decay at a rate slower than $O(N^{-1/2})$, depending on the rate 
of mixing.
 \end{remark}
 
\subsection*{Structure of the Paper}
In Section \ref{sec:main_thm}, we present our main result (Theorem \ref{thm:main}) concerning the rate of convergence in the FCLT with respect to an integral probability metric of smooth test functions for sequences of uniformly bounded real-valued random variables. 
The proof, given in Section \ref{sec:main_proof} and Appendix \ref{sec:proof_estimates_Ei}, relies on Barbour's method \cite{B90} which 
is reviewed in Section \ref{sec:stein}.
Our hypothesis, similar to \cite{LS20, HLS20}, is a functional correlation bound with a sufficiently fast polynomial rate of decay. Examples of dynamical systems that satisfy such bounds are discussed in Section \ref{sec:appli}. For the intermittent map
of Pikovsky \cite{P91}, we prove a polynomial functional correlation bound in Section \ref{sec:proof_pikovsky}.

\section{Abstract theorem}\label{sec:main_thm}

\subsection{Spaces of test functions $\mathscr{L}$, $\mathscr{M}$, and  $\mathscr{M}_0$}\label{sec:spaces} Denote by $D$ the space of all c\`{a}dl\`{a}g functions $w : [0,1] \to \bR$ equipped with the sup-norm $\Vert w \Vert_\infty = \sup_{ t \in [0,1] } | w(t)|$.
Given a function $f : D \to \bR$, by $f^{(k)}$ we mean the $k$th Fr\'{e}chet derivative of $f$, which is 
a map 
$f^{(k)} : D \to \cL( D^k , \bR )$ from $D$ to the space
$\cL( D^k, \bR )$ of all continuous 
multilinear maps from $D^k$ to $\bR$. The $k$-linear norm of $A \in \cL( D^k, \bR )$
is defined by
$$
\Vert A \Vert = \sup_{ \Vert w_i \Vert_\infty \le 1 \: \forall i = 1,\ldots, k } | A[w_1,\ldots, w_k] |,
$$
where 
$A[w_1,\ldots, w_k]$ denotes $A$ applied to the arguments $w_1,\ldots, w_k \in D$.

Following \cite{B90}, 
let $\sL$ be the Banach space of all continuous functions $g : D \to \bR$ for which the norm  
$$
\Vert g \Vert_{\sL} := \sup_{w \in D} \frac{ | g(w) | }{  1 + \Vert w \Vert_\infty^3 }
$$
is finite. Let $\sM \subset \sL$ be the subcollection of all twice Fr\'{e}chet differentiable functions $g \in \sL$ that satisfy
\begin{align}\label{eq:d2-lip}
\sup_{w,h \in D, \, h \neq 0} \frac{\Vert g''(w + h) -  g''(w)   \Vert}{  \Vert h \Vert_\infty } < \infty.
\end{align}
A norm on $\sM$ can be defined as follows:

\begin{prop}[See equation (2.7) in \cite{B90}]\label{prop:M_norm} For every $g \in \sM$, define 
	\begin{align*}
		\Vert g \Vert_\sM &= \sup_{w \in D} \frac{ | g(w) | }{ 1 + \Vert w \Vert_\infty^3 } +  
		\sup_{w \in D} \frac{  \Vert  g' (w) \Vert   }{ 1 + \Vert w \Vert_\infty^2 }  + 
		\sup_{w \in D} \frac{  \Vert  g'' (w) \Vert^2   }{ 1 + \Vert w \Vert_\infty } \\
		&+ 
		\sup_{w,h \in D}  \frac{ \Vert g''(w + h) - g''(w)  \Vert  }{ \Vert h \Vert_\infty }
	\end{align*}
	Then, for all $g \in \sM$, we have $\Vert g \Vert_\sM < \infty$.
\end{prop}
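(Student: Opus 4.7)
The plan is to verify finiteness of each of the four suprema that define $\|g\|_\sM$, for every $g\in\sM$. The first summand is automatic from $g\in\sL$, and the fourth is exactly the hypothesis \eqref{eq:d2-lip}. Only the two middle terms -- a quadratic bound on $g'$ and a linear bound on $g''$ -- require actual work. I would handle these by interpolating between the cubic growth of $g$ and the Lipschitz control on $g''$, using a second-order Taylor expansion together with an optimized free parameter.

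Let $L$ denote the Lipschitz constant of $g''$ supplied by \eqref{eq:d2-lip}. My starting point is Taylor's theorem with Lipschitz remainder, applied at both $+h$ and $-h$: for every $w,h \in D$,
\begin{align*}
g(w \pm h) \;=\; g(w) \pm g'(w)[h] + \tfrac{1}{2} g''(w)[h,h] + R_{\pm}(w,h),
\qquad |R_{\pm}(w,h)| \le \tfrac{L}{6} \|h\|_\infty^3.
\end{align*}
Adding and subtracting these two identities isolates
\begin{align*}
g''(w)[h,h] &= g(w+h) + g(w-h) - 2g(w) - (R_+ + R_-), \\
2\, g'(w)[h] &= g(w+h) - g(w-h) - (R_+ - R_-).
\end{align*}
The pointwise bound $|g(v)| \le \|g\|_\sL(1 + \|v\|_\infty^3)$ then dominates both right-hand sides by a common expression of the form $C(\|g\|_\sL + L)(1 + \|w\|_\infty^3 + \|h\|_\infty^3)$, after expanding $\|w\pm h\|_\infty^3$ with the elementary inequality $(a+b)^3 \le 4(a^3+b^3)$.

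Next, I would run a scaling argument. For an arbitrary unit vector $u \in D$, substituting $h = tu$ with $t>0$ extracts the $t$-dependence:
\begin{align*}
t^{2}\, |g''(w)[u,u]| \le C(1 + \|w\|_\infty^3 + t^3),
\qquad t\, |g'(w)[u]| \le C(1 + \|w\|_\infty^3 + t^3).
\end{align*}
Choosing $t$ of order $1 + \|w\|_\infty$ balances the right-hand sides and yields the two targeted estimates $|g''(w)[u,u]| \le C'(1 + \|w\|_\infty)$ and $|g'(w)[u]| \le C'(1 + \|w\|_\infty^2)$. Since $g''(w)$ is a continuous symmetric bilinear form, polarization then upgrades the diagonal bound to the full operator norm $\|g''(w)\|$ at the cost of a universal constant. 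Combined with the two automatic bounds on the first and fourth summands, this forces $\|g\|_\sM < \infty$.

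The one delicate point is the calibration of the free parameter $t$: it must balance the cubic growth of $g$ against the scale $t$ so that the resulting powers of $\|w\|_\infty$ match the denominators appearing in $\|g\|_\sM$. Once this optimization is performed correctly, the proposition reduces to direct bookkeeping with Taylor's formula and the definitions of $\sL$ and $\sM$.
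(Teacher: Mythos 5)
The paper does not reproduce a proof of this proposition; it defers to equation (2.7) of \cite{B90}, so there is no internal argument to compare against. Judged on its own terms, your Taylor-difference plus scaling argument does correctly produce the bounds $\Vert g'(w)\Vert \lesssim 1+\Vert w\Vert_\infty^2$ and $\Vert g''(w)\Vert \lesssim 1+\Vert w\Vert_\infty$, but it is more elaborate than necessary: the second bound follows in one line from the Lipschitz hypothesis \eqref{eq:d2-lip} via $\Vert g''(w)\Vert \le \Vert g''(0)\Vert + L\Vert w\Vert_\infty$, and once you have that, $g'(w) = g'(0) + \int_0^1 g''(tw)[w]\,dt$ immediately yields the quadratic gradient bound. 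No symmetric second-difference, optimized step-size, or polarization step is needed. Your scheme works, but it re-derives a global quantitative fact (linear growth of $g''$) by an interpolation detour when it is already a direct consequence of the definition of $\sM$.

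There is, however, a genuine gap you have not addressed. The third summand in the stated definition of $\Vert g\Vert_\sM$ carries a \emph{square}: $\sup_w \Vert g''(w)\Vert^2/(1+\Vert w\Vert_\infty)$. The linear bound $\Vert g''(w)\Vert \lesssim 1+\Vert w\Vert_\infty$ that you prove does \emph{not} imply finiteness of this term, and in fact the statement is false as written: take $g(w) = (w(0))^3$, so that $\Vert g''(w)\Vert = 6\lvert w(0)\rvert$, $g''$ is Lipschitz, and $g\in\sL$, yet $\Vert g''(w)\Vert^2/(1+\Vert w\Vert_\infty)$ is unbounded along constant paths $w\equiv c$ as $c\to\infty$. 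The exponent on $\Vert g''\Vert$ must be $1$, not $2$ (this matches Barbour's original (2.7) and is also forced by the use made of $\Vert g\Vert_\sM$ in \eqref{eq:taylor}, where $\Vert g''(w)\Vert\le K_g(1+\Vert w\Vert_\infty)$ with $K_g = C\Vert g\Vert_\sM$). Since you silently prove the unsquared version, you should either flag the typo explicitly or rephrase the third term; as written, your argument does not establish the quantity in the displayed definition.
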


\begin{remark}
For a test function $g \in \sM$, the solution $\phi(g)$ to the 
Stein equation \eqref{eq:se}, described in Section \ref{sec:stein}, 
satisfies $\phi(g) \in \sM$; see Lemma \ref{lem:stein_sol}. 
\end{remark}

To facilitate the adaptation of Stein's method to Brownian approximations of dynamical systems, an additional regularity assumption is imposed on $g \in \sM$.
Let $\sM_0 = \sM_{0}(C_0) \subset \sM$ consist of all $g \in \sM$ that satisfy the following smoothness condition introduced in \cite{B90}:
\begin{align}\label{eq:smooth}
	\sup_{w \in D}	| g''(w)[J_r, J_s - J_t] |   
	\le C_0 \Vert g \Vert_{\sM } |t-s|^{1/2} \quad \forall r,s,t \in [0,1],
\end{align}
where 
\begin{align}\label{eq:Jt}
	J_{ \alpha }(t) = \begin{cases}
		1, & \text{if $t \ge \alpha$}, \\ 
		0, & \text{if $t < \alpha$}.
	\end{cases}
\end{align}

\begin{remark}
	Condition \eqref{eq:smooth} is used in two places:
	Lemma \ref{lem:generator} and Proposition \ref{prop:estim_ii}. 
	For Lemma \ref{lem:generator}, which concerns the definition of the Stein operator $\cA$, 
	the following weaker condition suffices:
		\begin{align*}
		\lim_{n \to \infty} 
		\int_0^1 g''(w)[ J_{  \round{nt} /n   }^{(2)} ] \, dt = \int_0^1 g''(w) [  J_t^{(2)} ] \, dt \quad \forall w \in D.
	\end{align*}
	The formulation of \eqref{eq:smooth} is specifically tailored for Proposition \ref{prop:estim_ii}, which is instrumental for
	obtaining the error bound in Theorem \ref{thm:main}. Various relaxations of \eqref{eq:smooth}, such as weakening the modulus of 
	continuity $|t-s|^{1/2}$ or allowing $| g''(w)[J_r, J_s - J_t] | $ to grow with $\Vert w \Vert_\infty$ could 
	be considered. However, these would affect the error bounds in Theorem \ref{thm:main} and the rates of convergence in the FCLTs discussed in Section \ref{sec:appli}.
\end{remark}

\begin{example} The collection $\sM$
contains functions of the form
$$
g(w) = \int_0^1 \kappa (t, w(t)) \, d m(t)
$$
where $\kappa : [0,1] \times \bR \to \bR$ is a measurable function such that $\kappa (t, \cdot) \in C^2$ for all $t \in [0,1]$, 
\begin{align*}
&\sup_{t \in [0,1]} | \kappa (t,0)| + \sup_{t \in [0,1]} | \partial_2  \kappa (t,0)|  + \sup_{t \in [0,1]} | \partial_2^2  \kappa (t,0)| \le C_{\kappa}, \\
&\sup_{t \in [0,1]}  |  \partial_2^2 \kappa (t, x )  - \partial_2^2 \kappa (t,y)  | \le C_{\kappa} |x - y| \quad \forall x,y \in \bR^2,
\end{align*}
and $m$ is a probability measure on $[0,1]$. If we further require that 
$$
m([t,s]) \le C_m |t - s|^{1/2} \quad \text{and} \quad  \sup_{t \in [0,1], x \in \bR} | \partial_2^2  \kappa (t,x)| \le C_{\kappa}',
$$ 
then $\sM_0(C_0)$ contains $g$ for $C_0 = 2 C_m C_\kappa'$. Indeed, 
the second Fr\'{e}chet derivative of $g$ is 
given by
$$
g''(w)[h_1, h_2] = \int_0^1  \partial_2^2 \kappa (t, w(t)) h_1(t) h_2(t)  \, d m(t).
$$
Therefore,
\begin{align*}
	| g''(w)[J_r, J_s - J_t] | &\le C_{\kappa}' m([s,t])  \le C_{\kappa}' C_m |t - s |^{1/2} \le  \Vert g \Vert_{\sM}  2 C_{\kappa}' C_m |t - s |^{1/2}.
\end{align*}
\end{example}

\begin{example} For $t,s \in [0,1]$, the following function, which can be used to identify the covariance 
structure of the limiting process in the FCLT, belongs to $\sM$:
$$
g(w) = w(t)w(s).
$$
However, $g$ does not belong to $\sM_0(C_0)$ for any $C_0$, as it does not satisfy \eqref{eq:smooth}.
On the other hand, $\sM_0(C_0)$ for suitably 
large $C_0 = C_0(\ve)$ does contain 
the following smooth approximation of $g$:
$$
g_\ve(w) = \int_0^1 \int_0^1 K_\ve (  t - u ) K_\ve (  s - v )  w (u) w(v) \, du dv, 
$$
where $K(x) = ( 2 \pi )^{-1 /2 } e^{-x^2/2}$ is a Gaussian kernel and $K_\ve(x) = \ve^{-1} K(x/\ve)$.
The second Fr\'{e}chet derivative of $g_\ve$ is 
$$
g_\ve''(w)[h_1, h_2] =  \int_0^1 \int_0^1 K_\ve (  t - u )  K_\ve (  s - v )  h_1 (u) h_2(v) \, du dv.
$$
An elementary computation yields
\begin{align*}
		| g_\ve''(w)[J_r, J_s - J_t] | \le C_\ve \Vert g_\ve \Vert_{\sM} |s -t|
\end{align*}
for some constant $C_\ve > 0$ depending on $\ve$.
\end{example}

\begin{example}
	Let $\Theta \sim N(0,1)$ and $U \sim \mathrm{Uniform}(0,1)$ be random variables independent of $Z$, 
	where $Z$ is a standard Brownian motion.
	For $\varepsilon, \delta > 0$ and $h : D \to \bR$, define 
	$$
	h_{\varepsilon, \delta}(w) = \bfE [ h ( w_\varepsilon + \delta Z + \delta \Theta ) ],
	$$
	where $w_\varepsilon$ is given by
	\begin{align}\label{eq:w_eps}
	w_\varepsilon(s) = \bfE [ w ( s + \varepsilon U ) ],
	\end{align}
	with the convention that $w(t) = w(0)$ if $t < 0$ and $w(t) = w(1)$ if $t > 1$.  
	It was proved in \cite[Lemma 1.11]{BRZ24} that for any Skorokhod-measurable function 
	$h$ satisfying $\sup_{w \in D} |h(w)| \le 1$, 
	$$
	h_{\ve, \delta} \in \mathscr{M}_0(C_0) 
	\quad \text{with} \quad 
	C_0 = O \bigl( \ve^{-2} \delta^{-2} \bigr).
	$$
	The functions $h_{\varepsilon, \delta}$ were used in \cite{BRZ24} 
	to derive Gaussian smoothing inequalities, which 
	can be applied to 
	extend FCLT error bounds from $g \in \mathscr{M}_0$ to broader classes of test functions, 
	including bounded Lipschitz functions \cite[equation (1.15)]{BRZ24}
	and indicators of Skorokhod-measurable sets \cite[equation (1.14)]{BRZ24}, 
	albeit at the cost of reduced precision in the estimates.
\end{example}

\subsection{Functional correlation bound} Let $X_0, \ldots, X_{N-1}$, $N \ge 1$, be real-valued random variables defined on a probability space $(M, \cF, \mu)$ such that 
\begin{align}\label{eq:assume_rv}
\Vert X_n \Vert_\infty \le L \quad \text{and} \quad	\mu(X_n) = 0, \quad \text{for all $0 \le n < N$,}
\end{align}
for some constant $L > 0$, where $\mu(X_n)$ denotes the expectation of $X_n$ with respect to $\mu$\footnote{The assumption $\mu(X_n) = 0$ will not be restrictive because $\mu(\tilde X_n) = 0$ holds for $\tilde X_n:=X_n-\mu(X_n)$ and $X_n$ is integrable by the other assumption $\Vert X_n\Vert_\infty\le L$.}. 
For a finite non-empty subset $$I = \{i_1, \ldots, i_n\} \subset \bZ_+ \cap [0, N-1]$$ 
of indices $i_1 < \ldots < i_n$, we denote 
by $\nu_I$ the joint distribution of the 
subsequence $(X_i)_{i \in I}$. That is, 
$\nu_I$ is a probability measure on 
$[-L,L]^{n}$ characterized 
by the identity
\begin{align*}
	\int_{[-L,L]^n} h \, d \nu_I 
	= \int _{M} h(  X_{i_1} , \ldots, 
	X_{i_n} ) \, d \mu
\end{align*}
for bounded measurable $h : [-L,L]^n \to \bR$. In what 
follows, we consider unions 
$ I = \cup_{1 \le k \le K} I_k$ of 
index sets $I_k = \{i_{p_{k-1} + 1}, \ldots, 
i_{p_k} \}$ with $i_{p_{k-1} + 1} < \ldots < i_{p_k}$.
We will always assume that the 
sets are disjoint and ordered, in the sense that 
the gap between $I_k$ and $I_{k+1}$ satisfies 
$$
g_{k} = i_{p_k + 1} - i_{p_k} > 0 \quad \forall 
k = 1, \ldots, K-1.
$$
For brevity, we shall henceforth write $I_1 < \cdots < I_K$ 
to express these conventions.

\begin{defn} 
	Let $\vartheta \in (0,1]$. Given a function $F : [-L,L]^k \to \bR$, where $k \geq 1$, we define 
	\[
	[F]_{\vartheta} = \max_{1 \leq i \leq k} \sup_{x \in [-L,L]^k} 
	\sup_{a \neq a'} 
	\frac{|F(x(a/i)) - F(x(a'/i))|}{|a - a'|^{\vartheta}}.
	\]
	Here, $x(a/i) \in [-L, L]^k$ denotes the vector obtained from $x \in [-L,L]^k$ by replacing the $i$th component $x_i$ with $a \in [-L, L]$. 
	We say that $F$ is separately H\"{o}lder continuous with exponent $\vartheta$ if $[F]_{\vartheta} < \infty$, and we define $\|F\|_{\vartheta} = \|F\|_\infty + [F]_{\vartheta}$.
	Moreover, if $[F]_{\vartheta} < \infty$ holds with $\vartheta = 1$, we say that $F$ is separately Lipschitz continuous and write $\|F\|_{\Lip} = \|F\|_1$.
\end{defn}

\begin{defn}\label{defn:fcb}
	We say that $(X_n)_{0 \le n < N}$ 
	satisfies a functional 
	correlation bound with rate function 
	$R : \{1,2,\ldots\} \to \bR_+$ and constant $C_*$, if
	the following holds. Whenever 
	$I \subset \bZ_+ \cap [0, N -1]$ and  
	$I_1 < I_2$ are such that  
	$I = I_2 \cup I_2$, and  
	$F : [-L, L]^{|I|} \to \bR$ is 
	a separately Lipschitz continuous function,
	\begin{align}\label{eq:fcb}
		\biggl| \int F \, d \nu_{I} 
		- \int F d ( \nu_{I_1} \otimes  \nu_{I_2} ) \biggr|
		\le C_* \Vert F \Vert_{ \textnormal{Lip} } R(g_1). \tag{FCB}
	\end{align}
\end{defn}

By induction, \eqref{eq:fcb} readily extends  to the 
case of $K$ index sets ($K-1$ gaps) as follows. 

\begin{prop}\label{prop:fcb1-k}
	If $(X_n)_{0\le n< N}$ satisfies the functional correlation bound
	with rate function $R : \{1,2,\ldots\} \to \bR_+$ and constant $C_*$, then the following holds for all $K \ge2$: Whenever 
	$I \subset \bZ_+ \cap [0, N -1]$ and  
	$I_1 < \cdots < I_K$ are such that  
	$I = \cup_{k=1}^K I_k $, and  
	$F : [-L, L]^{|I|} \to \bR$ is 
	a separately Lipschitz continuous function,
	\begin{align}
		\label{eq:fcb2}
		\biggl| \int F \, d \nu_{I} 
		- \int F d ( \nu_{I_1} \otimes 
		\cdots \otimes \nu_{I_{ K } } ) \biggr|
		\le C_* \Vert F \Vert_{ \textnormal{Lip} }  \sum_{k=1}^{K-1} R(g_k).
		\tag{FCB'}
	\end{align}
\end{prop}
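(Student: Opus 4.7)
The plan is to proceed by induction on the number of blocks $K$. The base case $K = 2$ is precisely the hypothesis \eqref{eq:fcb}. For the inductive step, given $I_1 < \cdots < I_K$ with $I = \cup_{k=1}^K I_k$, I would group the last $K-1$ blocks together by setting $J := \cup_{k=2}^K I_k$, so that $I = I_1 \cup J$ with $I_1 < J$ and gap $g_1$. Applying \eqref{eq:fcb} directly to this two-block decomposition yields
\begin{equation*}
\biggl| \int F \, d\nu_I - \int F \, d(\nu_{I_1} \otimes \nu_J) \biggr| \le C_* \Vert F \Vert_{\mathrm{Lip}} \, R(g_1).
\end{equation*}

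Next, I would reduce the remaining error to a $(K-1)$-block problem by integrating out the $I_1$ coordinates. Writing $x = (x_{I_1}, x_J)$ and using Fubini,
\begin{equation*}
\int F \, d(\nu_{I_1} \otimes \nu_J) = \int \tilde F(x_J) \, d\nu_J(x_J),
\qquad
\tilde F(x_J) := \int F(x_{I_1}, x_J) \, d\nu_{I_1}(x_{I_1}).
\end{equation*}
A quick check shows that $\tilde F : [-L,L]^{|J|} \to \bR$ is separately Lipschitz with $\Vert \tilde F \Vert_{\mathrm{Lip}} \le \Vert F \Vert_{\mathrm{Lip}}$: the sup-norm bound is preserved by averaging, and for any $j \in J$ the Lipschitz bound in the $j$th coordinate of $F$ passes through the $\nu_{I_1}$-integral. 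Similarly, $\int \tilde F \, d(\nu_{I_2} \otimes \cdots \otimes \nu_{I_K}) = \int F \, d(\nu_{I_1} \otimes \cdots \otimes \nu_{I_K})$.

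Now I would apply the inductive hypothesis to $\tilde F$ together with the $K-1$ blocks $I_2 < \cdots < I_K$ (whose gaps are $g_2, \ldots, g_{K-1}$), obtaining
\begin{equation*}
\biggl| \int \tilde F \, d\nu_J - \int \tilde F \, d(\nu_{I_2} \otimes \cdots \otimes \nu_{I_K}) \biggr| \le C_* \Vert F \Vert_{\mathrm{Lip}} \sum_{k=2}^{K-1} R(g_k).
\end{equation*}
Combining this with the two-block estimate via the triangle inequality produces the asserted bound \eqref{eq:fcb2}.

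There is no genuine obstacle; the induction is clean. The only point requiring a moment of care is the verification that the partially averaged function $\tilde F$ inherits the separately Lipschitz norm without inflation, which ensures that the constant $C_* \Vert F \Vert_{\mathrm{Lip}}$ remains the same at each stage of the recursion and so the final bound has the rate terms $R(g_k)$ entering additively rather than multiplicatively.
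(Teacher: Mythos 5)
Your proof is correct and essentially mirrors the paper's argument: both proceed by induction on $K$, isolate one block via a single application of \eqref{eq:fcb}, integrate that block out to produce a partially averaged function with $\Vert \tilde F\Vert_{\Lip} \le \Vert F\Vert_{\Lip}$, and invoke the induction hypothesis on the remaining $K-1$ blocks. The only difference is cosmetic — you peel off the first block $I_1$ and average over it, whereas the paper peels off the last block $I_K$ and averages over $x_K$ — which has no bearing on the estimate.
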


\begin{proof}
	This is done by induction with respect to $K\ge 2$. 
	The base case \(K = 2\) is given by the assumption that \eqref{eq:fcb} holds.
	We now suppose that \eqref{eq:fcb2} holds for $K-1$ where $K > 2$. 
	Since \eqref{eq:fcb} holds for $K=2,$ we have
	\begin{align}\label{eq:fcb_base_case}
		\biggl|\int F \ d\nu_I-\int F \ d(\nu_{\cup_{i=1}^{K-1}I_i}\otimes \nu_{I_K}) \biggr|
		\le C_{\ast}\Vert F \Vert_{ \textnormal{Lip} }R(g_{K-1}).
	\end{align}
	Note that 
	\begin{align*}
	\int F \ d(\nu_{\cup_{i=1}^{K-1}I_i}\otimes \nu_{I_K})  = \int \tilde{F}  d \nu_{\cup_{i=1}^{K-1}I_i},
	\end{align*}
	where $\tilde{F} : [-L, L]^{ |I| - |I|_K } \to \bR$ is defined by
	\begin{align*}
	\tilde{F}(y_1, \ldots,  y_{ p_{K-1} } ) =  \int_M F( y_1, \ldots, y_{ p_{K - 1 }  }   , X_{i_{p_{K-1}+1}}(x_K), \ldots ,X_{i_{p_{K}}}(x_K)) \, d \mu(x_K).
	\end{align*}
	Since $\tilde{F}$ is separately Lipschitz continuous with
	$$
	\Vert \tilde{F} \Vert_{\Lip} \le \Vert F \Vert_\Lip,
	$$
	it follows from the induction hypothesis that
	\begin{align}\label{eq:ih_fcb}
		\biggl| \int \tilde{F}  \, d\nu_{\cup_{i=1}^{K-1}I_i}
		- \int \tilde{F} \, d(\nu_{I_1}\otimes\cdots\otimes \nu_{I_{K-1}}) \biggr| 
		\le C_{\ast}\Vert F\Vert_{ \Lip } \sum_{i=1}^{K-2}R(g_{i}).
	\end{align}
	Combining \eqref{eq:fcb_base_case} and \eqref{eq:ih_fcb} yields 
	\eqref{eq:fcb2} for $K$, which completes the proof. 
\end{proof}

\begin{remark}  
Correlation decay conditions such as \eqref{eq:fcb} arise quite naturally in applications of Stein's method to distributional approximations of dynamical systems. Let us illustrate this in the context of normal approximations. In the case of the standard normal distribution $N(0,1)$, the Stein equation is given by
\begin{align}\label{eq:se_normal}
f'(w) - w f(w) = h(w) - \Phi(h),
\end{align}
where $\Phi(h)$ denotes the expectation of $h$ with respect to $N(0, 1)$.  
Let $(T, M, \cF, \mu)$ be a measure-preserving transformation, and let $\varphi : M \to \bR$ be a bounded measurable observable with $\mu(\varphi) = 0$. Consider the partial sum  
$
V = b^{-1} \sum_{n=0}^{N-1} X_n,
$  
where $X_n = \varphi \circ T^n$, $b =  \sqrt{\text{Var}_\mu(  \sum_{n=0}^{N-1} X_n    )}$, and it is assumed that $b > 0$.
Here, $T^n = T \circ T^{n-1}$ with $T^0$ being the identity map.  
Solving \eqref{eq:se_normal} for a given test function $h$ and taking expectations, we have  that 
$$
| \mu[h(V)] - \Phi(h) | \leq | \mu[f'(V) - V f(V)] |.
$$  
It is well known (see \cite{CGS11}) that for a Lipschitz continuous $h$, the solution $f$ to \eqref{eq:se_normal} has bounded 
derivatives up to second order. Introducing the punctured sums $V_{n,K} := \sum_{0 \leq i < N, |i - n| > K} X_i$, we can decompose  
$
\mu[f'(V) - V f(V)]
$ as follows:  
	\begin{align}
		&\mu[V f(V) - f'(V)] \notag \\
		&= b^{-1} \mu \biggl( \sum_{n=0}^{N-1} X_n (f(V) - f(V_{n,K}) - f'(V)(V - V_{n,K})) \biggr) \label{eq:1d_first} \\
		&\quad + \mu \biggl[ \biggl( b^{-1} \sum_{n=0}^{N-1} \sum_{ 0 \le m < N, \, |n-m| \le K  } X_n X_m - 1 \biggr) f'(V) \biggr] \label{eq:1d_second} \\
		&\quad + b^{-1} \mu \biggl( \sum_{n=0}^{N-1} X_n f(V_{n,K}) \biggr). \label{eq:1d_third}
	\end{align}  
	Controlling \eqref{eq:1d_first} and \eqref{eq:1d_second} involves Taylor expansions, along with mixing properties of the dynamics. 
	For simplicity, let us focus on \eqref{eq:1d_third}.  
	Observe that, if $(X_n)$ in \eqref{eq:1d_third} is replaced with a sequence of centered and independent random variables, then \eqref{eq:1d_third} vanishes as soon as $K \geq 1$. However, for a dependent sequence, \eqref{eq:1d_third} can be large. Using $\mu(X_n) = \mu(\varphi) = 0$, we can write 
	$$
	\mu  ( X_n f(V_{n,K}) ) = \int F \, d \nu_{I} - \int F \, d (\nu_{I_1} \otimes \nu_{I_2} \otimes \nu_{I_3}),
	$$  
	where  $I = \cup_{i=1}^3 I_i$ with 
	$$
	I_1 = \{0 \leq i < N : i < n - K\}, \quad I_2 = \{n\}, \quad I_3 = \{0 \leq i < N : i > n + K\},
	$$  
	and $F : [- \Vert \varphi \Vert_\infty, \Vert \varphi \Vert_\infty]^{|I|} \to \bR$ is a separately Lipschitz continuous function with $\Vert F \Vert_{\Lip} \leq C \Vert \varphi \Vert_\infty \Vert f \Vert_{\Lip}$ for some absolute constant $C > 0$.  
	Assuming \eqref{eq:fcb2}, we find that  
	$$
	|\eqref{eq:1d_third}| \le N b^{-1} C_* \Vert F \Vert_{ \Lip }  2 R(K + 1)
	\le 
	N b^{-1} C C_* \Vert \varphi \Vert_\infty \Vert f \Vert_{\Lip} 2 R(K + 1),
	$$  
	where it is natural to expect that $R(n)$ decays rapidly as $n \to \infty$ if $T$ is a suitably chaotic dynamical system and $\varphi$ is sufficiently regular.
\end{remark}

\subsection{Main result: Brownian approximation under \eqref{eq:fcb}}\label{sec:main_thm_statement}
Let $X_0, \ldots, X_{N-1}$ be real-valued random variables as in \eqref{eq:assume_rv}.
Define the following quantities:
\begin{align*}
	&\sigma_{i,j} = \mu(X_iX_j), 
	\quad 
	\beta_i = \sum_{j = 0}^{N-1} \sigma_{i,j},
	\quad 
	B_k = \sum_{i=0}^{k-1} \beta_i
	= \sum_{i = 0}^{k-1} \sum_{j=0}^{N-1} \sigma_{i,j},
\end{align*}
where we adopt the conventions $\beta_0 = B_0 = 0$. Note that 
\begin{align}\label{eq:def_B}
B := B_N = \text{Var}_\mu \biggl(  \sum_{i=0}^{N-1} X_i \biggr).
\end{align}
We assume that $B > 0$.  As noted in Section \ref{sec:appli},
in the case that $X_n$ is generated by a chaotic dynamical system, it typically holds that 
$$
\inf_{N\ge N_0}\tilde B>0,\quad \tilde B:=B/N
$$
with some integer $N_0$.

For $t \in [0,1]$ and $0 \le n < N$, let 
\begin{align}\label{eq:def_theta}
\theta_n(t) = J_{ B_n / B }(t),
\end{align}
and define a random element $S_N \in D$ by 
\begin{align*}
S_N(t) &= \sum_{n=0}^{N-1}   \theta_n(t)  X_n.
\end{align*}

Let $Z$ be a standard Brownian motion. By adapting Barbour's approach \cite{B90}, we establish the following error bound in approximating the law of 
\begin{align}\label{eq:def_W}
	W_N = B^{-1/2} S_N
\end{align}
by the law of $Z$ with respect to an integral probability metric of smooth test functions.

\begin{thm}\label{thm:main}
	Assume that $(X_n)_{0 \le n < N}$ satisfies the functional correlation bound 
	\eqref{eq:fcb}
	with rate function $R$ and constant $C_*$. 
	Then, for any
	$g \in \sM_0(C_0)$,
	\begin{align}\label{eq:bound_main}
		| \mu [ g(W_N) ] - \bfE[g(Z)] |  \le C  C_{N} \Vert g \Vert_{\sM} B^{-3/2} N  = C  C_{N} \Vert g \Vert_{\sM} \tilde{B}^{-3/2}  N^{-1/2},
	\end{align}
	where $C > 0$ is an absolute constant,
	\begin{align*}
		C_{N} = (L+1)^3 \biggl\{  1 + ( C_*  + 1)  \hat{R}_3(N)
		+ C_0C_*^{3/2}    \hat{R}_1(N) ^{1/2}( 1 +  \hat{R}_2(N)) \biggr\},
	\end{align*}
	and
	$$
	\hat{R}_1(N) = \sum_{j=0}^{N-1} \hat{R}(j), \quad \hat{R}_2(N) = \sum_{j=0}^{N-1} j^{1/2} \hat{R}(j), \quad 
	\hat{R}_3(N) = \sum_{j=0}^{N-1} j \hat{R}(j)
	$$
	with $\hat{R}(n) = \max_{n \le j \le N} R(j)$ for $n \ge 1$ and $\hat{R}(0) = 1$.
\end{thm}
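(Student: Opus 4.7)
The plan is to implement Barbour's Stein-method framework \cite{B90} (recalled in Section \ref{sec:stein}) combined with the functional correlation bound \eqref{eq:fcb2}. First I would invoke the Stein equation $\cA f = g - \bfE[g(Z)]$ and use the regularity result on the solution map $\phi$ (Lemma \ref{lem:stein_sol}) to conclude that $f = \phi(g) \in \sM$, with $\sM$-norm controlled by $\Vert g\Vert_{\sM}$, and moreover that $f$ inherits from $g$ a smoothness estimate of the form \eqref{eq:smooth} with a constant proportional to $C_0$. This reduces the problem to bounding $|\mu[\cA f(W_N)]|$, where $\cA$ has the Barbour form with quadratic term $\frac12 \int_0^1 f''(w)[J_t,J_t]\,dt$ and linear drift term $-\int_0^1 f'(w)[J_t]\,dw(t)$ (in the appropriate limiting sense).

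Next I would decompose $\mu[\cA f(W_N)]$ via a leave-one-out / Taylor expansion argument. Writing $S_N(t) = \sum_{n=0}^{N-1}\theta_n(t)X_n$, for each $n$ introduce the punctured process $W_{N,n,K} = W_N - B^{-1/2}\sum_{|m-n|\le K}\theta_m X_m$ (for a blocking parameter $K$ to be optimized later) and Taylor-expand $f'(W_N)$ and $f''(W_N)$ around $W_{N,n,K}$ up to the order dictated by the regularity of $f$. This produces three families of error terms: (i) a third-order Taylor remainder controlled by the Lipschitz modulus of $f''$ together with the cubic growth in $\Vert W_N\Vert_\infty^3$; (ii) a covariance-matching term that compares $B^{-1}\sum_{|n-m|\le K}\sigma_{n,m}\theta_n(t)\theta_m(t)$ to $\theta_n(t)^2$; and (iii) an off-diagonal cross term $\mu[X_n f'(W_{N,n,K})[J_{B_n/B}]]$ that would vanish for independent $X_n$. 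Each of these terms is then expressed as an integral of a separately Lipschitz function $F$ with respect to the joint law of $(X_i)_{i\in I}$ on index sets composed of the cluster near $n$ together with a far-away block, separated by gaps of size at least $K+1$.

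At this point I would apply \eqref{eq:fcb2} to each of (i)--(iii), producing factors $\sum_k R(g_k)$. Summing over $n$ and choosing $K$ appropriately (in the end, simply taking $K=N$ so the $R$-sums run over the full range and using $\hat R$ in place of $R$ to get monotone weights) yields the quantities $\hat R_1(N),\hat R_2(N),\hat R_3(N)$ in $C_N$. The Taylor-remainder term (i) gives the contribution of order $C_*\hat R_3(N)$, while the covariance term (ii) also produces a similar factor; these do not involve $C_0$. The genuinely new contribution comes from (iii), handled by Proposition \ref{prop:estim_ii}, which uses the smoothness estimate \eqref{eq:smooth} on $f''$ combined with a Cauchy--Schwarz-type estimate over nearby times, producing the $C_0 C_*^{3/2}\hat R_1(N)^{1/2}(1+\hat R_2(N))$ contribution. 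The overall $B^{-3/2}N$ prefactor arises from summing $N$ terms each normalized by $B^{-1/2}$ three times (Taylor expansion of order three) together with the moment bounds on $W_N$ under \eqref{eq:fcb}.

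The main obstacle will be Proposition \ref{prop:estim_ii}, where the smoothness condition \eqref{eq:smooth} is critical. A naive bound on terms of the form $f''(W_N)[J_r,J_s-J_t]$ via the Lipschitz continuity of $f''$ alone would produce only $O(\Vert J_s-J_t\Vert_\infty) = O(1)$, losing the $|t-s|^{1/2}$ modulus needed to close the estimate at the rate $O(N^{-1/2})$. The condition \eqref{eq:smooth}, together with the fact that $\phi(g)$ inherits a quantitatively controlled version of this Hölder-type control on mixed second differences, provides exactly the extra $|t-s|^{1/2}$ decay. Propagating this through the sums, in combination with the functional correlation bound applied to suitably Lipschitz auxiliary functions, is the delicate step and is the source of the exponents $1/2$ and $1$ in $\hat R_1, \hat R_2$. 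All other aspects --- the Markov-generator identities for $\cA$, the moment bounds on $\Vert W_N\Vert_\infty^p$ via \eqref{eq:fcb}, and the bookkeeping of the Taylor remainders --- follow rather mechanically from Barbour's framework once the replacement of independence by \eqref{eq:fcb2} is carried out carefully.
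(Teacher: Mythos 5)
Your overall strategy is the paper's: set up Barbour's Stein equation via Lemma~\ref{lem:stein_sol} and Lemma~\ref{lem:generator}, decompose $\mu[\cA f(W_N)]$ by a leave-one-out/Taylor argument around the punctured processes $W_{n,m}$, express the error terms as integrals of separately Lipschitz functions, and close with \eqref{eq:fcb2}. However, you have the role of the smoothness condition \eqref{eq:smooth} and the constant $C_0$ swapped, and this propagates through the bookkeeping in a way that would not let the estimate close as you describe.

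In the paper, $\mu[\cA f(W_N)] = I + II$, where $I$ contains the Taylor remainders and all off-diagonal cross terms (paper's $E_1,\ldots,E_7$, including precisely the term that vanishes for independent $X_n$, your item~(iii)), and $II$ is a purely deterministic discretization error: it compares $B^{-1}\sum_{n,m}\sigma_{n,m}f''(W)[\theta_n,\theta_m]$ against $\int_0^1 f''(W)[J_t^{(2)}]\,dt$. Proposition~\ref{prop:estim_ii} estimates \emph{this discretization term $II$}, and that is where the $C_0$ factor and the H\"older modulus $|t-s|^{1/2}$ in \eqref{eq:smooth} are used: the terms $f''(W)[J_r,J_s-J_t]$ with $|t-s|$ small arise because $\theta_n = J_{B_n/B}$ and $J_t$ differ on short time intervals, not because of the gaps between indices of $X_n$. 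The cross term that vanishes under independence (your~(iii), the paper's $E_3$--$E_5$) is estimated purely via conditions (A1)--(A3), which follow from \eqref{eq:fcb2}; it contributes the $(C_*+1)\hat R_3(N)$ piece \emph{without} any $C_0$ factor (see Proposition~\ref{prop:estim_I}, where the bound involves only $C_1, C_2, C_3$). Your claim that ``(ii) \ldots these do not involve $C_0$'' and ``the genuinely new contribution comes from (iii), handled by Proposition~\ref{prop:estim_ii}'' is therefore backwards. If you tried to estimate your item~(iii) with \eqref{eq:smooth} you would find no reason for a factor $|t-s|^{1/2}$ to appear there; conversely, your covariance-matching term~(ii), which you would try to bound without $C_0$, genuinely requires \eqref{eq:smooth} because on each interval between consecutive $B_n/B$-values one must compare $J_{B_n/B}$ to $J_t$, and the Lipschitz modulus of $f''$ alone only gives the useless $O(1)$ bound you yourself identified.

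A secondary inaccuracy: the $B^{-3/2}N$ prefactor does not come from ``moment bounds on $\|W_N\|_\infty^p$ via FCB'' but simply from the normalization $\tilde X_n = B^{-1/2}X_n$ appearing cubed in the Taylor remainder ($E_2, E_7$), together with the uniform bound $|X_n|\le L$; no moment estimate on the supremum norm of $W_N$ is needed for \eqref{eq:estim_I_stein}.
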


\begin{remark} Notice that,
	\begin{align*}
		C_{N} = C (L+1)^3 ( C_0 + 1 ) (C_* + 1)^{3/2} ( \hat{R}_3(N) + 1  )^{3/2},
	\end{align*}
	where $C > 0$ is an absolute constant.
	Hence, if
	$(X_n)_{n \ge 0}$ is a sequence of real-valued random variables such that, for 
	all $N \ge 1$, 
	\eqref{eq:fcb} holds with  
	a rate function $R$ that satisfies $\sum_{m=1}^\infty m \hat{R} (m) < \infty$, and if $B^{-1/2} = O(N^{-1/2})$, then we obtain 
	$| \mu [ g(W_N) ] - \bfE[g(Z)] | = O(N^{-1/2})$ for all $g \in \sM_0$.
\end{remark}

\begin{remark}
We will establish \eqref{eq:bound_main} under a set of correlation decay conditions weaker than \eqref{eq:fcb}. Specifically, Theorem \ref{thm:prelim} shows that, to obtain \eqref{eq:bound_main} for a particular test function $g \in \sM_0$, it suffices to verify \eqref{eq:fcb} for a subclass of separately Lipschitz functions associated with $g$. However, the expressions for the functions in this subclass are complicated, resulting in conditions that are seemingly more technical than \eqref{eq:fcb}.
\end{remark}

\subsection{About weak convergence} 
As the family of test functions $\sM_0$ is significantly smaller than the collection of all bounded Skorokhod-continuous functions, convergence of the right hand side of \eqref{eq:bound_main} to zero
does not automatically yield the weak convergence $W_N \Rightarrow Z$
with respect to the Skorokhod topology. On the other hand, it follows from
\cite[equation (1.16)]{BRZ24} that, provided the right hand side of \eqref{eq:bound_main} converges to zero 
as $N \to \infty$, then $W_N \Rightarrow Z$ if for each $\lambda > 0$,
\begin{align}\label{eq:weak_cond}
	\limsup_{\ve \to 0} \limsup_{N \to \infty} \mu \biggl( \Vert ( W_N )_\ve - W_N    \Vert_\infty  \ge \lambda \biggr) = 0,
\end{align}
where $w_\ve$ for $w \in D$ is defined as in \eqref{eq:w_eps}.
Using \cite[Lemma 1.4]{BRZ24}, we obtain 
as a consequence of \eqref{eq:weak_cond} and \eqref{eq:bound_main}
the following sufficient condition for $W_N \Rightarrow Z$, suitable for applications to dynamical systems. For example, it follows that $W_N \Rightarrow Z$ holds 
in Theorem \ref{thm:pikovsky} if $\gamma < 3/2$, and in Theorem \ref{thm:billiards}.

\begin{prop}\label{prop:weak} Let $(X_n)_{n \ge 0}$ be a sequence of variables as in \eqref{eq:assume_rv} satisfying the 
	following conditions:
	\begin{itemize}
		\item[(i)] For all $N \ge 1$, \eqref{eq:fcb} holds with a rate function $R$ such that 
		$\sum_{n=1}^\infty n R(n) < \infty$ and  
		$\sum_{n=1}^\infty R(n)^{1/2} < \infty$.
		\smallskip
		\item[(ii)] $B_N^{-1} = o(N^{-2/3})$. \smallskip
		\item[(iii)] There exist $C_1 > 0$,  
		$N_0 \ge 1$ and $M_N \ge 1$ with $M_N = o(B_N)$
		such that, whenever $N \ge N_0$,
		\begin{align*}
			\inf_{ 0 \le u,v < N, \:  v - u \ge M_N } ( B_v^{ (N) } - B_u^{(N)}   )  \ge C_1 M_N,
		\end{align*}
		where $B_u^{(N)}  = \sum_{i=0}^{u-1} \sum_{j=0}^{N-1} \mu(X_iX_j)$.
	\end{itemize}
	Then, $W_N$ converges weakly to $Z$ with respect to the Skorokhod topology as 
	$N \to \infty$.
\end{prop}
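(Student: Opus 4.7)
The strategy is to combine the error bound of Theorem~\ref{thm:main} with two ingredients from \cite{BRZ24} mentioned in the preceding discussion. Equation~(1.16) of \cite{BRZ24} upgrades convergence $\mu[g(W_N)] \to \bfE[g(Z)]$ on the smooth class $\sM_0$ to weak convergence in the Skorokhod topology, provided the smoothing bound \eqref{eq:weak_cond} also holds, while \cite[Lemma~1.4]{BRZ24} supplies a moment-type sufficient criterion for \eqref{eq:weak_cond} itself. The plan therefore splits into two parts: (a) applying Theorem~\ref{thm:main} to establish $\sM_0$-convergence under hypotheses~(i) and~(ii); and (b) verifying \eqref{eq:weak_cond} via \cite[Lemma~1.4]{BRZ24} under hypotheses~(i) and~(iii).

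For part~(a), first observe that \eqref{eq:fcb} remains valid if $R$ is replaced by its nonincreasing majorant, so we may assume that $R$ itself is nonincreasing and hence that $\hat R = R$. The summability $\sum_n n R(n) < \infty$ in~(i) then yields a uniform-in-$N$ bound on $\hat R_3(N)$, and a fortiori on $\hat R_1(N)$ and $\hat R_2(N)$ (since $R(n) \le 1$ implies $R(n) \le R(n)^{1/2}$). Consequently the constant $C_N$ of Theorem~\ref{thm:main} is bounded above by some absolute constant $C' < \infty$, and \eqref{eq:bound_main} reduces to $|\mu[g(W_N)] - \bfE[g(Z)]| \le C' \Vert g \Vert_\sM B_N^{-3/2} N$. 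Hypothesis~(ii) is precisely $B_N^{-3/2} N \to 0$, so $\mu[g(W_N)] \to \bfE[g(Z)]$ for every $g \in \sM_0$.

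For part~(b), the application of \cite[Lemma~1.4]{BRZ24} requires moment-type control on the increments $W_N(t) - W_N(s) = B_N^{-1/2} \sum_{n \,:\, B_n / B_N \in (s,t]} X_n$. Hypothesis~(iii) is exactly the device that bounds the cardinality of $\{n : B_n/B_N \in (s,t]\}$ by $|t-s|$: by its contrapositive, whenever this index range spans at least $M_N$ consecutive indices the corresponding time length $(B_v^{(N)} - B_u^{(N)})/B_N$ is at least $C_1 M_N / B_N$, so the index range is in fact at most $M_N + C_1^{-1} B_N |t - s|$. Combined with the $L^2$- and $L^4$-increment estimates produced by \eqref{eq:fcb} together with the additional summability $\sum_n R(n)^{1/2} < \infty$ in~(i), this yields the modulus-of-continuity control needed to invoke \cite[Lemma~1.4]{BRZ24} and conclude \eqref{eq:weak_cond}.

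The hard part will be the bookkeeping on small scales (of size $\le M_N$), where $n \mapsto B_n$ need not be monotone and the above cardinality bound degrades by an additive $M_N$; the hypothesis $M_N = o(B_N)$ is precisely engineered so that the contribution of these scales, of relative size $M_N/B_N$, vanishes in the double limit $\limsup_{\ve \to 0} \limsup_{N \to \infty}$ governing \eqref{eq:weak_cond}. Beyond this, the remaining work should be routine verification that the estimates just described match the explicit hypotheses of \cite[Lemma~1.4]{BRZ24}, completing the argument by combining part~(a) and part~(b) through \cite[equation~(1.16)]{BRZ24}.
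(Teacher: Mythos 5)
Your proposal follows essentially the same strategy as the paper's proof: reduce to verifying \eqref{eq:weak_cond} via \cite[equation (1.16)]{BRZ24}, obtain $\sM_0$-convergence from Theorem~\ref{thm:main} under (i) and (ii), then bound fourth moments of increments by combining the cardinality estimate on $I_N(s,t)$ (coming from (iii)) with $\sum R(n)^{1/2} < \infty$, and finally invoke the tightness criterion from \cite[Lemma 1.4]{BRZ24} (equations (1.22)--(1.24)) to conclude. One minor point: the bound $|I_N(s,t)| \le C M_N + C B_N|t-s|$ is obtained in the paper by a chaining argument that sums applications of (iii) along a subsequence $r_j = k_{1 + jM_N}$ of the enumeration of $I_N(s,t)$, rather than by a single contrapositive application of (iii) as you sketch, though your stated conclusion is correct.
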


\begin{remark}
	Assume that (i) holds. Then a sufficient condition for both (ii) and (iii) is:
	\begin{itemize}
		\item[(ii')] There exist $\eta \in (0,1]$ and $\Sigma > 0$ such that 
		$
		| N^{-1} B_N - \Sigma | = O(N^{-\eta}).
		$
	\end{itemize}
	To derive (iii) from (ii'), let $0 \le u < u + h < N$
	and denote $S_u = \sum_{i=0}^{u-1} X_i$. Since $\sum_{n=1}^\infty nR(n) < \infty$, 
	$$
	B_{u+h}^{(N)} - B_u^{(N)} \;\ge\; \mu(S_{u+h}^2) - \mu(S_u^2) - C
	$$
	for some constant $C>0$ independent of $u,h,N$.  
	From (ii') we have
	$$
	\mu(S_{u+h}^2) - \mu(S_u^2) 
	\ge h \Sigma - C' |u+h|^{\,1-\eta} 
	\ge h \Big( \Sigma - C' h^{-1} (2N)^{1-\eta} \Big)
	$$
	for some $C'>0$ independent of $u,h,N$.  
	Choosing $M_N =  \round{ C_2 N^{1-\eta} }$ 
	for $C_2$ sufficiently large
	gives (iii).  
	In particular, if $X_n = f \circ T^n$ where $T : M \to M$ is $\mu$-preserving and 
	$f : M \to \mathbb{R}$ is bounded with $\mu(f) = 0$, then (ii) and (iii) hold provided 
	$$
	\Sigma = \mu(f^2) + 2 \sum_{n=1}^\infty \mu(f \circ T^n \cdot f) > 0,
	$$
	since in this case (ii') holds with $\eta = 1$.
\end{remark}

\begin{proof}
	In what follows we will denote 
	by $C$ a constant whose exact value is unimportant, and whose value can change from one occurence
	to the next. 
	
	By (i) and (ii), the right hand side of \eqref{eq:bound_main} tends to zero as $N \to \infty$. Therefore, 
	by \cite[equation (1.16)]{BRZ24}, it suffices to verify \eqref{eq:weak_cond}. We do this by applying \cite[equation (1.23)]{BRZ24} together with a bound on the fourth moment 
	$\mu[ ( W_N(t) - W_N(s) )^4 ]$ derived using (i)-(iii).
	
	Let $N \ge N_0$.
	For $0 \le s \le t \le 1$ define 
	$$
	I_N(s,t) =  \{ 0 \le k < N \: : \:  B_k^{(N)} / B_N \in (s,t]  \}.
	$$
	It follows from (iii) that
	\begin{align}\label{eq:I_estim}
		|I_N(s,t)| \le C M_N +  C B_N ( t- s).
	\end{align}
	To see this, let $k_1 < \ldots < k_m$, $m \ge 1$, be an enumeration of the elements of $I_N(s,t)$ in increasing order.
	Set 
	$$
	q = \biggl\lfloor \frac{m - 1}{M_N}  \biggr\rfloor, \quad r_j = k_{ 1 + jM_N }, \quad 0 \le j \le q.
	$$
	If $q = 0$ we have $m < M_N + 2 \le 3 M_N$. So, we may assume $q \ge 1$. Then, 
	since $r_{j+1} - r_j \ge M_N$ and $B_k^{(N)} / B_N \in (s,t]$ for $k \in I_N(s,t)$, (iii) yields
	\begin{align*}
		C_1 q M_N   \le 
		\sum_{j=1}^q ( B_{ r_j }^{(N)} - B_{r_{j-1}}^{(N)} )
		=
		B_{  r_q }^{(N)} - B_{r_0}^{(N)} \le B_N(t - s),
	\end{align*}
	from which it is straightforward to deduce \eqref{eq:I_estim}. It is then a direct verification using 
	$\sum_{n=1}^\infty R(n)^{1/2} < \infty$ that, with the convention $R(0) = 1$, 
	\begin{align}\label{eq:fourth_moment}
		\begin{split}
			\mu[  ( W_N(t ) - W_N(s)  )^4  ] &\le B_N^{-2} \sum_{ i \in I_N(s,t) }  \sum_{ j \in I_N(s,t) }  \sum_{ k \in I_N(s,t) }  \sum_{ \ell \in I_N(s,t) } |\mu( X_i X_j X_k X_\ell )| \\
			&\le C B_N^{-2} \sum_{  i \le j \le k \le \ell , \:  i,j,k,\ell \in I_N(s,t)  }  \sqrt{ R( j - i ) R( \ell - k )  }  
			\\
			&\le C B_N^{-2} \biggl(   \sum_{ i \le j, \:   i,j \in I_N(s,t)  }  \sqrt{ R(j - i) }   \biggr)^2 \\
			&\le  C ( |t - s|^2 + ( M_N / B_N )^2  ).
		\end{split}
	\end{align}
	Consequently,
	\begin{align}\label{eq:tightness}
		\mu[  ( W_N(t ) - W_N(s)  )^4  ] \le C|t-s|^2
	\end{align}
	holds if $t - s \ge 2^{-1} K_N^{-1}$ with $K_N = \round{B_N / M_N}$,
	where $K_N \to \infty$ by (ii) and (iii).
	Applying Markov's inequality, we see that \cite[equation (1.24)]{BRZ24}
	holds with $\beta = 2$ and $n = K_N$.
	By another application 
	of \eqref{eq:fourth_moment} we find that for sufficiently large $N$ and any 
	$s \in [ (k-1) / K_N,  k / K_N ]$,
	\begin{align}\label{eq:varphi_n}
		\begin{split}
		K_N \mu( |  W_N(s)  - W_N( (k-1) / K_N )    |^4 ) 
		&\le C K_N \biggl(  K_N^{-2}  +  ( M_N / B_N )^2  \biggr) \\
		&\le C M_N / B_N \to 0.
		\end{split}
	\end{align}
	Hence, the quantity in \cite[equation (1.22)]{BRZ24} converges to zero.
	Now, \eqref{eq:tightness}, \eqref{eq:varphi_n}, and \cite[equation (1.23)]{BRZ24} imply
	\begin{align}\label{eq:approx_w}
		\mu \biggl( \Vert ( W_N )_\ve - W_N    \Vert_\infty  \ge \lambda \biggr) 
		\le C_\lambda (   M_N / B_N + \ve  ),
	\end{align}
	whenever $\ve \in (  1 / K_N, 1 )$, where $C_\lambda > 0$ is a constant independent of $\epsilon$.
	In particular, \eqref{eq:weak_cond} holds.	
\end{proof}

\begin{remark}
Combining \eqref{eq:approx_w} and 
\eqref{eq:bound_main} with the Gaussian smoothing inequalities in \cite[Theorem 1.1 and Corollary 1.3]{BRZ24}, 
the error bounds in \eqref{eq:approx_w} can be extended
to test functions with less regularity than those 
in $\sM_0$. For example, substituting \eqref{eq:approx_w} 
and \eqref{eq:bound_main} into \cite[equation (1.19)]{BRZ24} yields together 
with \cite[equation (1.28)]{BRZ24} a rate of convergence with respect to the 
Prokhorov metric after optimizing for the parameters in a manner similar to \cite[example 1.8]{BRZ24}.
The rates obtained in this way are, however, considerably slower than those given by 
\eqref{eq:bound_main} for functions $g \in \mathscr{M}_0$. 
A detailed treatment of these applications is beyond the scope of this paper.
\end{remark}

\section{Applications}\label{sec:appli}

In this section, we apply Theorem \ref{thm:main} to four concrete examples of dynamical systems: (1) Pikovsky maps, (2) Liverani--Saussol--Vaienti maps, (3) Lasota--Yorke maps, and (4) dispersing Sinai billiards. Among these, our primary contribution lies in establishing a functional correlation bound for Pikovsky maps, which is done in Section \ref{sec:proof_pikovsky}. For the other models, Theorem \ref{thm:main} is directly applicable due to functional correlation bounds that have already been established in prior works: \cite{L17} for model (2), 
\cite{LS20} for model (3), and \cite{LS17} for model (4).
Additionally, for models (2) and (3), we give results for nonautonomous compositions -- sequential compositions for (2) and random compositions for (3) -- these cases also follow directly from earlier results.

\subsection{Application 1: Pikovsky maps} Following \cite{P91}, for $\gamma > 1$, let $T :  M \to M$ be a map on $M = [-1,1]$ 
with graph as in Figure \ref{fig:maps},  defined implicitly by the relations
\begin{align}\label{eq:pikovsky}
	x = \begin{cases}
		\frac{1}{2 \gamma } ( 1 + T(x))^{\gamma},  &0 \le x \le \frac{1}{2 \gamma }, \\
		T(x) + \frac{1}{2 \gamma } (1 - T(x))^{\gamma}, &\frac{1}{2 \gamma } \le x \le 1,
	\end{cases}
\end{align}
and by letting $T(x) = - T(-x)$ for $x \in [-1,0]$.
The map has neutral fixed points
at $x = 1,-1$, while at $x = 0$ its derivative becomes infinite. The parameter $\gamma > 1$ controls 
both the neutrality of the fixed points and the degree of the singularity:
\begin{align*}
	&T'(x) \approx C_\gamma x^{ 1/ \gamma - 1 }   \quad \text{as $x \downarrow 0$}, \\
	&T'(x) \approx 1 +  \frac12 (1 - x)^{ \gamma - 1 }  \quad \text{as $x \uparrow 1$},
\end{align*}
where $C_\gamma > 0$ is a constant depending on $\gamma$, and $A \approx B$ indicates
$A/B \to 1$.
Note that $T$ reduces to the angle doubling map at the limit $\gamma \downarrow 1$.

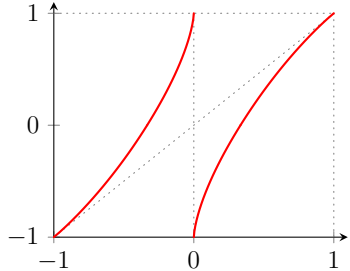
\begin{figure}[h!]
	\begin{tikzpicture}[every node/.style={scale=0.8}]
		\tikzmath{
			\ggamma = 1.5;
		}
		\begin{axis}[
			height       = 1.85in,
			xmin         = -1.0,
			xmax         = 1.1,
			ymin         = -1.0,
			ymax         = 1.1,
			xtick        = {-1.0, 0.0, 1.0},
			xticklabels  = {$-1$, $0$, $1$},
			axis x line  = bottom,
			ytick        = {-1.0, 0.0, 1.0},
			yticklabels  = {$-1$, $0$, $1$},
			axis y line  = left,
			line cap=round
			]
			% dashed lines
			\addplot[gray,dotted] coordinates { (-1,1) (1,1) };
			\addplot[gray,dotted] coordinates { (1,1) (1,-1) };
			\addplot[gray,dotted] coordinates { (-1,-1) (1,1) };
			\addplot[gray,dotted] coordinates { (0,-1) (0,1) };
			% Graph of T
			\addplot[red,thick,domain=-1.0:0.0, samples=21]({0.5 / \ggamma * (1 + x)^\ggamma}, {x});
			\addplot[red,thick,domain=0.0:1.0, samples=21]({x + 0.5 / \ggamma * (1 - x)^\ggamma}, {x});
			\addplot[red,thick,domain=-1.0:0.0, samples=21]({- 0.5 / \ggamma * (1 + x)^\ggamma}, {-x});
			\addplot[red,thick,domain=0.0:1.0, samples=21]({- x - 0.5 / \ggamma * (1 - x)^\ggamma}, {-x});
		\end{axis}
	\end{tikzpicture}
	\caption{Graph of the Pikovsky map~\eqref{eq:pikovsky}}
	\label{fig:maps}
\end{figure}

Let $\lambda$ denote the Lebesgue measure on $M$ normalized to probability. It follows 
from \eqref{eq:pikovsky} that the transfer operator $\cL_T : L^1(\lambda) \to L^1(\lambda)$ associated with 
$(T, \lambda)$, defined by the duality relation
$$
\int_0^1 g \circ T \cdot f \, d \lambda =  \int_0^1 g \cdot  \cL_T(f) \, d \lambda \quad \forall f \in L^1(\lambda), \, \forall g \in L^\infty(\lambda),
$$
fixes the constant function $1$, so that 
$\lambda$ is 
invariant under $T$. (For properties of transfer operators of non-singular measurable maps, we refer to \cite{B00,B18}.) 
Given a function $f : M \to \bR$, we define $$X_n = f \circ T^n,$$ 
where $T^n = T^{n-1} \circ T$ for $n \ge 1$ and $T^0$ is the identity map. We 
consider 
$(X_n)$ as a random process on $(M, \cF, \mu)$, where $\cF$ is 
the $\sigma$-algebra of Borel sets and $\mu = \lambda$. 
By \cite[Proposition 5]{CHMV10}, we have 
\begin{align}\label{eq:corr_decay}
\mu( f \cdot f \circ T^n )  - \mu(f) \cdot \mu (f)  = O(n^{ - 1 / ( \gamma - 1) }),
\end{align}
whenever $f$ is H\"{o}lder continuous.
By  \cite[Proposition 7]{CHMV10}, 
$(X_n)$ satisfies a central limit theorem if $\gamma < 2$, and the associated 
FCLT follows from \cite{CHMV10, CLM23, MN05}.

\begin{lem}\label{lem:fcb_pikovsky} 
	Let $\gamma > 1$, and let $f$ be Lipschitz continuous. Then, 
	for all $N \ge 1$, 
	\eqref{eq:fcb} holds with 
	$$
	C_* =( \Vert f \Vert_{ \Lip } + 1 )C_\gamma, \quad R(n) = n^{ - 1 / ( \gamma - 1) }, 
	$$
	where $C_\gamma$ is a constant depending only on $\gamma$. 
\end{lem}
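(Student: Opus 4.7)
The plan is to prove \eqref{eq:fcb} by reducing it to a parametrized correlation-type bound across the gap $g_1$ and then invoking polynomial decay of correlations for the Pikovsky map through the Young-tower description of \cite{CHMV10, CLM23}. The scalar decay rate $n^{-1/(\gamma-1)}$ established in \cite[Proposition 5]{CHMV10} will carry over to the functional setting. The chief obstacle is that $T'(0^+) = \infty$, so $f \circ T^j$ is not globally Lipschitz on $M$, and one cannot simply apply a one-shot mixing bound to $F$.

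First, I would reduce to a single correlation integral. Let $m = i_{p_1}$ and $k = i_{p_1+1}$, so that $k - m = g_1$. Define
\[
\Phi_1(x) = \bigl(f(T^{i_j} x)\bigr)_{j=1}^{p_1}, \qquad
\Phi_2(y) = \bigl(f(T^{i_j - k} y)\bigr)_{j = p_1+1}^{n},
\]
and $G(x,y) = F(\Phi_1(x), \Phi_2(y))$. Using the $T$-invariance of $\mu$, the LHS of \eqref{eq:fcb} equals
\[
\biggl| \int \bigl[G(x, T^k x) - \bar G(x)\bigr]\,d\mu(x) \biggr|,
\qquad
\bar G(x) := \int G(x,y)\,d\mu(y).
\]
The integrand is a parametrized mean-zero correlation: for each fixed $x$, the function $y \mapsto G(x,y) - \bar G(x)$ has zero $\mu$-average, and $y \mapsto G(x,y)$ depends on $y$ only through iterates indexed from $0$ through $i_n - k$.

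Second, I would bound this parametrized correlation by lifting to the Young tower $\Delta$ of \cite{CHMV10, CLM23}, whose first return time $\tau$ to the base satisfies $\lambda(\tau > n) = O(n^{-\gamma/(\gamma-1)})$. The tower supports a transfer operator with $L^1$ decay of order $n^{-1/(\gamma-1)}$ in an appropriate Banach space, with the singularity of $T$ at $0$ absorbed into the polynomial tail of $\tau$. A conditioning/slicing argument in the parameter $\Phi_1(x)$, combined with the separately Lipschitz structure of $F$ (which collapses the $\ell^1$ oscillation of $\Phi_2$ into a single Lipschitz modulus per slice) and the Lipschitz regularity of $f$, then reduces the parametrized correlation to the desired uniform bound, yielding $C_\ast = (\|f\|_\Lip + 1)\, C_\gamma$ with $C_\gamma$ depending only on $\gamma$.

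The main technical obstacle is to carry out the slicing without losing the sharp rate $g_1^{-1/(\gamma-1)}$: a naive discretization of the range of $\Phi_1(x)$ introduces cut-off Lipschitz constants that would degrade the exponent. The fix is to work directly with the tower transfer operator, using the Markov structure of the induced first-return map to control distortion and to translate the Lipschitz bound on $f$ into the correct tower-level norm on $\Delta$. The extension from two index sets to $K$ is then automatic from Proposition \ref{prop:fcb1-k}.
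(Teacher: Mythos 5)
Your reduction is essentially the paper's: after recentering, both the paper's $H(x,y) = F(X_{i_1}(x),\dots,X_{i_m}(x),X_{i_{m+1}}(y),\dots,X_{i_n}(y))$ and your $G(x,y)$ turn \eqref{eq:fcb} into a single parametrized correlation $\int H(x,x)\,d\lambda - \iint H(x,y)\,d\lambda\,d\lambda$, and you correctly diagnose the core obstacle: because $T'$ blows up at $0$, one cannot simply treat $x\mapsto\bar G(x)$ as a Lipschitz observable and invoke a one-shot mixing bound. However, beyond that point there is a genuine gap. You correctly reject slicing on the range of $\Phi_1(x)$, but your replacement fix --- ``work directly with the tower transfer operator, using the Markov structure of the induced first-return map to control distortion and translate the Lipschitz bound on $f$ into the correct tower-level norm'' --- is a promissory note, not an argument. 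The missing idea, which is the actual core of the paper's proof, is to partition $M$ into the return-time cylinders $C_{i_*}(\ell_0,\dots,\ell_p)$ with a cutoff $i_* = i_m + \lfloor g_1/3 \rfloor$ placed inside the gap, and to approximate the first-variable dependence $x\mapsto H(x,y)$ by a constant on each cylinder. The separately Lipschitz property of $F$ together with the Lipschitz property of $f$ then discharges into a sum of \emph{image diameters} $\sum_{\ell\le i_m}\lambda(T^\ell C)$, and the crucial quantitative input is the cylinder-image contraction bound $\lambda(T^\ell C) \le C_\gamma (i_* - \ell)^{-\gamma/(\gamma-1)}$ (Proposition \ref{prop:image_cylinder}), whose tail sum gives exactly the rate $g_1^{-1/(\gamma-1)}$. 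Nothing in your proposal identifies this mechanism, so the ``sharp rate'' obstacle you flag is not resolved; it is re-stated.

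Two further points you gloss over that matter for a complete proof. First, after the cylinder approximation one must discard cylinders whose last return time $\ell_p$ exceeds a threshold $\ell_\#$ inside the gap, and this extra error must be shown to be $O(g_1^{-1/(\gamma-1)})$ via the return-time tail. Second, on the remaining cylinders the conditional measure $\lambda|_C/\lambda(C)$ must be pushed forward by $T^{\ell_p}$ and shown to have a log-Lipschitz density on $Y$ (a distortion estimate, not a property of $f$); only then does the polynomial $L^1$ mixing bound of Lemma \ref{lem:mixing_pikovsky} apply. Your phrase ``translate the Lipschitz bound on $f$ into the correct tower-level norm'' conflates two distinct regularity requirements: the regularity of the transported reference measure (which is what drives the mixing estimate) and the regularity of the observable (which only enters through the cylinder-approximation step). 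As written, the proposal has the right shape but leaves the decisive construction --- the return-time cylinder decomposition with the gap-dependent cutoff and the contraction-of-images estimate --- unspecified.
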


We postpone proving Lemma \ref{lem:fcb_pikovsky} until Section \ref{sec:proof_pikovsky} and proceed to describe an application of Theorem \ref{thm:main}. The quantities $W_N$, $B$, and $Z$ appearing below 
are defined as in Section \ref{sec:main_thm_statement}.

\begin{thm}\label{thm:pikovsky} Assume that $f : M \to \bR$ is Lipschitz continuous with
	$\mu(f) = 0$. If $\gamma < 2$ and $f$ is not a coboundary\footnote{We say that 
		$f$ is a coboundary if there exists $g \in L^2(\mu)$ such that $f = g - g \circ T$. It is known that for nonuniformly expanding Markov maps such as the Pikovsky map, the 
		non-coboundary condition is a generic condition~\cite{G06}.}\label{f:1219},
	$\tilde{B} := \inf_{N \ge 1}  N^{-1} B_N > 0$.
	Moreover,
	for any $g \in \sM_0(C_0)$ and $N \ge 1$,
	\[
	| \mu [ g(W_N) ] - \bfE[g(Z)] |  \le 
	C C_\#  \Vert g \Vert_\sM 
	( \rho _\gamma (N)   
	+    1  )  \tilde{B}^{-3/2}   N^{-1/2},
	\]
	where $C > 0$ is an absolute constant, 
	\[
	C_\# =  (C_0 + 1) (  1 +   \tfrac{\gamma - 1}{2 - \gamma} )^{3/2} (C_\gamma  + 1)^{3/2} ( \Vert f \Vert_{\Lip } +1)^{9/2} 
	\]
	with $C_\gamma$ is as in Lemma \ref{thm:pikovsky}, and 
	\[
	\rho _\gamma (N)
	=
	\begin{cases}
		1 - c_\gamma^{-1} \quad &(\gamma <3/2)\\
		1+\log N \quad &(\gamma =3/2)\\
		1 + c_\gamma^{-1} N^{  c_\gamma } \quad &(\gamma >3/2)
	\end{cases}
	\]
	with $c_\gamma
	=  ( 2 \gamma - 3) / ( \gamma - 1)$.
	In particular, $| \mu [ g(W_N) ] - \bfE[g(Z)] | $ converges to zero as $N \to \infty$ when $\gamma< 5/3$.
\end{thm}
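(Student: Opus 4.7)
The plan is to combine Lemma \ref{lem:fcb_pikovsky} with Theorem \ref{thm:main} and then to estimate the three weighted partial sums $\hat R_1(N),\hat R_2(N),\hat R_3(N)$ coming from the rate function $R(n)=n^{-1/(\gamma-1)}$. The assumption $\gamma<2$ forces $1/(\gamma-1)>1$, which is what makes the abstract theorem applicable and what drives the positivity of $\tilde B$.

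First I would establish $\tilde B>0$. Expanding
\begin{align*}
N^{-1}B_N = \mu(f^2) + 2\sum_{k=1}^{N-1}(1-k/N)\,\mu(f\cdot f\circ T^k),
\end{align*}
and applying \eqref{eq:fcb} to $F(x,y)=xy$ with $I_1=\{0\}$ and $I_2=\{k\}$ gives the summable bound $|\mu(f\cdot f\circ T^k)|\le C\,k^{-1/(\gamma-1)}$. Dominated convergence then yields $N^{-1}B_N\to\sigma^2:=\mu(f^2)+2\sum_{k\ge1}\mu(f\cdot f\circ T^k)$, positive by the non-coboundary hypothesis through the standard Green--Kubo argument. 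Combined with $B_N>0$ for each finite $N$ (if $B_N=0$ then $S_N\equiv0$ leads, after composing with $T$, to $f=f\circ T^N$, and mixing of $T$ forces $f\equiv0$, contradicting the non-coboundary property), this gives $\tilde B>0$.

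Next I would estimate the sums by comparison with integrals. Since $1/(\gamma-1)>1$,
\begin{align*}
\hat R_1(N)\le 1+\int_1^\infty x^{-1/(\gamma-1)}\,dx = 1+\tfrac{\gamma-1}{2-\gamma},
\end{align*}
which is precisely the factor appearing inside $(1+(\gamma-1)/(2-\gamma))^{3/2}$ in $C_\#$. For $\hat R_2(N)$, the summand has exponent $1/2-1/(\gamma-1)$, which is strictly less than $-1$ iff $\gamma<5/3$; in that regime $\hat R_2(N)$ is bounded by a constant depending only on $\gamma$, while for $\gamma\in[5/3,2)$ it grows like $N^{3/2-1/(\gamma-1)}$. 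For $\hat R_3(N)$, integration of $x^{(\gamma-2)/(\gamma-1)}$ from $1$ to $N$ produces $(N^{c_\gamma}-1)/c_\gamma$ with $c_\gamma=(2\gamma-3)/(\gamma-1)$, and the three sign regimes of $c_\gamma$ give exactly the three branches of $\rho_\gamma(N)$.

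Finally I would substitute into the formula for $C_N$ in Theorem \ref{thm:main} and verify that the $\hat R_3$ contribution dominates. For $\gamma<5/3$ the mixed term $C_0 C_*^{3/2}\hat R_1(N)^{1/2}(1+\hat R_2(N))$ is uniformly bounded in $N$, so only $(C_*+1)\hat R_3(N)$ carries $N$-dependence, producing the factor $\rho_\gamma(N)+1$. For $\gamma\in[5/3,2)$ a direct exponent check gives $\hat R_3(N)\asymp N^{2-1/(\gamma-1)}$ dominating $\hat R_1(N)^{1/2}(1+\hat R_2(N))\asymp N^{3/2-1/(\gamma-1)}$ by a factor of $N^{1/2}$, so the same bound persists. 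Collecting the constants (with $L\le \Vert f\Vert_\Lip+1$ and $C_*=(\Vert f\Vert_\Lip+1)C_\gamma$) into $C_\#$ and factoring out $\tilde B^{-3/2}N^{-1/2}$ gives the claimed estimate. The $\gamma<5/3$ criterion for the error to vanish is then immediate since $c_\gamma<1/2$ in that range. I expect the main technical point to be the bookkeeping across the three regimes of $\gamma$, in particular verifying that the mixed term $\hat R_1(N)^{1/2}\hat R_2(N)$ never overtakes $\hat R_3(N)$ for any $\gamma\in(1,2)$; once this is clear, the result drops out by direct substitution.
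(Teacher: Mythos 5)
Your argument is correct and follows the paper's proof essentially verbatim: plug the rate $R(n)=n^{-1/(\gamma-1)}$ from Lemma~\ref{lem:fcb_pikovsky} into Theorem~\ref{thm:main}, bound $\hat R_1,\hat R_2,\hat R_3$ by the corresponding integrals to produce the factor $\bigl(1+\tfrac{\gamma-1}{2-\gamma}\bigr)^{3/2}$ and the three branches of $\rho_\gamma(N)$, and absorb $L\le\Vert f\Vert_{\Lip}$ and $C_*\le C_\gamma(\Vert f\Vert_{\Lip}+1)$ into $C_\#$. The only cosmetic deviation is in ruling out $B_N=0$ for small $N$: you invoke mixing (to force $f\equiv 0$ from $f=f\circ T^N$), whereas the paper argues more economically from stationarity alone, noting that $\sum_{n<N}X_n\equiv 0$ and $\mu\circ T^{-1}=\mu$ would give $B_{kN}=0$ for all $k$, contradicting $B_{kN}/(kN)\to\Sigma>0$.
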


\begin{proof} Since $\gamma < 2$, we have
	\begin{align}\label{eq:Sigma}
	\lim_{N \to \infty} N^{-1} B_N = \lim_{N \to \infty} N^{-1} \text{Var}_\mu \biggl( \sum_{n=0}^{N-1} X_n   \biggr)
	= \mu(f^2) + 2 \sum_{n=1}^\infty \mu( f \cdot f \circ T^n ) =: \Sigma,
	\end{align}
    where the series converges by 
    \eqref{eq:corr_decay}. Moreover, 
    $\Sigma = 0$ if and only if $f$ is 
    a coboundary; see \cite{L95, G06, KKM18}.
	Therefore, there exists $N_0 \ge 1$ such that 
	$N^{-1} B_N \ge \Sigma$ for all $N \ge N_0$. On the other hand, we must have $B_N / N > 0$ if $1 \le N \le N_0$.  
	Indeed, if $B_N / N = 0$ for some $N$, then 
	$\sum_{n=0}^{N-1} X_n$ is constant $\mu$-a.e., and by stationarity it follows that, 
	for each $k \ge 1$, $B_{kN} = \mu( ( \sum_{n=0}^{kN - 1} X_n )^2  ) = 0$.  
	This contradicts $\lim_{N\to\infty} B_N / N = \Sigma > 0$.
	An application of 
	Theorem \ref{thm:main} combined with Lemma \ref{lem:fcb_pikovsky} now yields the upper bound	
	\begin{align*}
		&| \mu [ g(W_N) ] - \bfE[g(Z)] |  \\
		&\le C C_\# \Vert g \Vert_{\sM} 
		( 1 + \hat R_3(N) )(1+\hat R_1(N))^{3/2}  
		\Vert g \Vert_{\sM} \tilde{B}^{-3/2}  N^{-1/2} \\
		&\le C C_\# \Vert g \Vert_{\sM} 
		\biggl(1 + \sum_{n=0}^{N-1} n^{- 1 / (\gamma - 1)} \biggr)^{3/2}  
		\biggl( 1 + \sum_{n=0}^{N-1} n^{  1 - 1 / (\gamma - 1)  } \biggr) \Vert g \Vert_{\sM} \tilde{B}^{-3/2}  N^{-1/2}.
	\end{align*}
	Further,
	\[
	\sum_{n=1}^{N-1} n^{1 - 1/(\gamma - 1) }
	\le 
	\begin{cases}
		1 +  (\gamma - 1) / ( 3 - 2 \gamma ) \; &(  \gamma < 3 /2 )\\
		1 + \log N\; &( \gamma  = 3/2 )\\
		1 - (\gamma - 1) / ( 3 - 2 \gamma ) N^{ 2 - 1 / (  \gamma - 1 ) } \; &( \gamma > 3/2),
	\end{cases}
	\]
	and $\sum_{n=1}^{N-1} n^{ - 1 / ( \gamma - 1) } \le  1 + ( \gamma - 1 ) / ( 2 - \gamma )$.
\end{proof}

\subsection{Application 2: (Non)autonomous Liverani--Saussol--Vaienti maps}
For \(\alpha \in (0,1)\), consider the map  \(T : M \to M\) on $M = [0,1]$ defined by 
\begin{align}\label{eq:lsv}
	T(x) = \begin{cases} x(1+ 2^{\alpha }x^{\alpha}) &  x \in [0, 1/2),\\
		2x-1 & x \in [1/2,1].
	\end{cases} \hspace{0.5cm}
\end{align} 
Let $\beta \in (0,1)$ and $a_\beta > 2^\beta(\beta+ 2)$.
Following~\cite{LSV99},
we define
the convex and closed cone
\begin{align*}
	\cC_*(\beta) = \{f\in C((0,1])\cap L^1(\lambda) \,:\,
	& \text{$f\ge 0$, $f$ decreasing,}
	\\
	& \text{$x^{\beta+1}f$
		increasing, $f(x)\le a_\beta x^{-\beta} \lambda(f)$}\},
\end{align*}
where $\lambda$ denotes the Lebesgue measure on $M$.
It follows from~\cite{LSV99, AHNTV15} that $\cC_*(\beta)$ contains 
densities of invariant absolutely continuous probability measures
for maps~\eqref{eq:lsv} with parameters $\alpha \in (0, \beta]$.

\subsubsection{Nonautonomous compositions} 
We begin by giving a result for nonautonomous compositions of maps in \eqref{eq:lsv} with time-dependent parameters. From this, we deduce a corresponding result for 
conventional measure preserving systems. We suppose that $T_1, T_2, \ldots$ 
are maps 
in \eqref{eq:lsv} with corresponding parameters $\alpha_1, \alpha_2, \ldots$, and that $f_n : M \to \bR$ 
are functions such that:
\begin{align}\label{eq:ass_lsv}
	\alpha_* := \sup_{n \ge 1} \alpha_n < 1/2, \quad  
	\sup_{n \ge 1} \Vert f_n \Vert_{ \Lip } \le L, 
	\quad \text{and} \quad \mu(f_n \circ T_n \circ \cdots \circ T_1) = 0 \quad \forall n \ge 0,
\end{align}
where $\mu$ is a probability measure whose density belongs to $\cC_*(\alpha_*)$. We define 
\begin{align}\label{eq:time-dep}
	X_n = f_n \circ T_{1,n}, \quad T_{1,n} = T_n \circ \cdots \circ T_1,
\end{align}
and 
consider $(X_n)_{n\ge 0}$ as a random 
process on $( M, \cF, \mu )$.
Central limit theorems and associated weak/strong invariance principles for nonstationary processes generated by intermittent maps have been studied 
in several previous works including \cite{S22, NPT21, L18, NTV18}.
Here, we complement these results by providing an error bound in a self-normed FCLT.

\begin{lem}\label{lem:fcb_lsv} Assume \eqref{eq:ass_lsv}.
	Then, for all $N \ge 1$, \eqref{eq:fcb} holds with
	\begin{align*}
		C_* = C_{\alpha_*}(L+1), \quad R(n) = \begin{cases}
			n^{ 1 - 1 / \alpha_* } ( \log n )^{ 1 / \alpha_* }, &\text{if $n \ge 2$}, \\
			1, &\text{if $n = 1$,}
		\end{cases}
	\end{align*}
	where $C_{\alpha_*} > 0$ is a constant depending only on $\alpha_*$.
\end{lem}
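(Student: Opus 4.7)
My plan is to derive \eqref{eq:fcb} as a consequence of the transfer-operator coupling (memory-loss) estimates for nonautonomous LSV maps in the regime $\alpha_* < 1/2$, established in \cite{L17, NTV18, AHNTV15}. The two key inputs are that each $\cL_{T_n}$ preserves the cone $\cC_*(\alpha_*)$, and that there is a polynomial contraction
\begin{equation*}
\bigl\| \cL_{T_{n+g}} \cdots \cL_{T_{n+1}} (h_1 - h_2) \bigr\|_{L^1(\lambda)} \le C_{\alpha_*} R(g)
\end{equation*}
for any two normalized densities $h_1, h_2 \in \cC_*(\alpha_*)$, with $R(g) = g^{1-1/\alpha_*}(\log g)^{1/\alpha_*}$ for $g \ge 2$. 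By duality this bounds $|\int \varphi\,(h_1 - h_2)\,d\lambda|$ for any Lipschitz $\varphi$ by $C_{\alpha_*}\|\varphi\|_{\Lip} R(g)$.

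Next, I would reduce the general correlation estimate to this contraction by a past--future split. Let $\rho$ denote the $\mu$-density, write $y_k(x) := T_{1,k}(x)$, and for $I_1 = \{i_1 < \cdots < i_p\}$, $I_2 = \{i_{p+1} < \cdots < i_{p+q}\}$ with gap $g_1 = i_{p+1} - i_p$, introduce
\begin{equation*}
\Psi(z;\mathbf u) := F\bigl(\mathbf u,\, f_{i_{p+1}}(T_{i_p+1,\,i_{p+1}}(z)), \ldots, f_{i_{p+q}}(T_{i_p+1,\,i_{p+q}}(z))\bigr).
\end{equation*}
Then $\int F\,d\nu_I = \int \Psi(y_{i_p}(x);\, X_{i_1}(x),\ldots,X_{i_p}(x))\,d\mu(x)$, while $\int F\,d(\nu_{I_1}\otimes\nu_{I_2})$ has the identical expression with $y_{i_p}(x)$ replaced by an independent sample drawn from $(T_{1,i_p})_*\mu$. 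After integrating out the past, the difference becomes
\begin{equation*}
\int d\nu_{I_1}(\mathbf u)\,\int \Psi(z;\mathbf u)\bigl(h_{\mathbf u}(z) - h_{\mathrm{marg}}(z)\bigr)\,d\lambda(z),
\end{equation*}
where $h_{\mathbf u}$ is the conditional density of $y_{i_p}$ given the past values $\mathbf u$ and $h_{\mathrm{marg}} = \cL_{T_{i_p}}\cdots\cL_{T_1}\rho$ is its marginal density.

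The third step pushes the remaining gap of length $g_1$ through the transfer operator, rewriting the inner integral as
\begin{equation*}
\int \Psi(z;\mathbf u)\,\bigl(\cL_{T_{i_p+g_1-1}}\cdots\cL_{T_{i_p+1}}\bigr)\bigl(h_{\mathbf u} - h_{\mathrm{marg}}\bigr)(z)\,d\lambda(z),
\end{equation*}
after which $\Psi(\cdot;\mathbf u)$ sees the state only through the single additional iterate $T_{i_p+1,\,i_{p+1}}$. The separate Lipschitz hypothesis on $F$, combined with $\sup_n \|f_n\|_{\Lip} \le L$ and the uniform boundedness of $X_n$ by $L$, shows that the resulting test function of $z$ is Lipschitz with constant $O((L+1)\|F\|_{\Lip})$. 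Applying the coupling bound yields the estimate $C_{\alpha_*}(L+1)\|F\|_{\Lip} R(g_1)$, matching the constants of the lemma. For the trivial case $g_1=1$ one absorbs the contribution into the constant and sets $R(1)=1$.

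The main obstacle is the disintegration step: for $\nu_{I_1}$-a.e.\ past $\mathbf u$, one must check that $h_{\mathbf u}$ lies in a uniformly bounded multiple of $\cC_*(\alpha_*)$ so that the cone contraction applies uniformly. This is handled as in \cite{L17} by expressing a properly renormalized $h_{\mathbf u}$ in the form $\cL_{T_{1,i_p}}\!\bigl(\rho\,\prod_{j=1}^p \varphi_j \circ T_{1,i_j}\bigr)$ for bounded non-negative functions $\varphi_j$ approximating the indicator functions of the past observables, and invoking the stability of $\cC_*(\alpha_*)$ under transfer operators and under multiplication by bounded non-negative weights. With this invariance in place the claimed values $C_* = C_{\alpha_*}(L+1)$ and rate function $R(n)$ follow directly from the quoted coupling estimate.
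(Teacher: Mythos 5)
The paper's own proof is a two-line reduction: it rewrites the difference as an integral of a separately Lipschitz functional $\tilde F$ of the trajectory of the compositions $T_{1,i_j}$, checks $\|\tilde F\|_{\Lip}\le (L+1)\|F\|_{\Lip}$, and then invokes \cite[Theorem 1.1]{L17} as a black box. You instead attempt to sketch a from-scratch proof of the underlying functional correlation bound, which is a genuinely different (and substantially more ambitious) route.

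The critical gap in your sketch is the disintegration step, which you yourself flag as the \emph{main obstacle}. You want the conditional density $h_{\bfu}$ of $y_{i_p}=T_{1,i_p}$ given the past observables $\bfu$ to live in a uniformly bounded multiple of $\cC_*(\alpha_*)$, and you propose to establish this by writing a renormalized $h_{\bfu}$ as
$\cL_{T_{1,i_p}}\bigl(\rho\,\prod_{j=1}^p\varphi_j\circ T_{1,i_j}\bigr)$
with bounded non-negative weights $\varphi_j$, appealing to stability of $\cC_*(\alpha_*)$ under the transfer operators \emph{and under multiplication by bounded non-negative weights}. The second part of that claim is false: $\cC_*(\alpha_*)$ is defined by the requirements that $f$ be decreasing, that $x\mapsto x^{\beta+1}f(x)$ be increasing, and that $f(x)\le a_\beta x^{-\beta}\lambda(f)$, and none of these is preserved when $f$ is multiplied by a generic bounded non-negative function (even by the indicator of a generic level set of an observable, which is what your $\varphi_j$ approximate). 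Consequently the intermediate composites $\varphi_j\cdot\cL_{T_{i_{j-1}+1,i_j}}(\cdots)$ leave the cone, the memory-loss/coupling estimate you quote cannot be applied uniformly in $\bfu$, and the argument does not close. The correct mechanism is in \cite[Theorem 1.1]{L17} and is not a disintegration on the values of the past observables combined with cone stability under arbitrary weights; as written, your proposal fails precisely at the step you identified as the crux. Citing \cite[Theorem 1.1]{L17} directly, as the paper does, sidesteps this entirely.
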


\begin{proof} Let $F$, $I, I_1,I_2$ be as in Definition \ref{defn:fcb}. Then, 
	\begin{align}\label{eq:rewrite_fcb}
		\begin{split}
			&\int F \, d \nu_{I} 
			- \int F d ( \nu_{I_1} \otimes  \nu_{I_2} ) \\
			&= \int_M F(  X_{i_1}(x), \ldots, X_{i_n}(x)  ) \, d \mu(x) \\
			&- \iint_{M^2} 
			F(  X_{i_1}(x), \ldots, X_{i_{p_1}}(x), X_{i_{p_1 + 1}  }(y), \ldots, X_{i_n}(y)  ) \, d \mu(x) \, d \mu(y) \\
			&= \int_M  \tilde{F} (  T_{1, i_1}(x), \ldots, T_{1, i_n}(x)  ) \, d \mu(x) \\
			&- \iint_{M^2} 
			\tilde{F} (  T_{1, i_1}(x), \ldots, T_{1, i_{p_1}}(x), T_{1, i_{p_1 + 1 }  }(y), \ldots, T_{1, i_n}(y)  ) \, d \mu(x) \, 
			d \mu(y),
		\end{split}
	\end{align}
	where 
	$$
	\tilde{F}(  x_{1}, \ldots, x_{n}  ) = F( f_{  i_1  }(x_{1}), \ldots, f_{  i_n  }(x_{n}) ).
	$$
	Since $\Vert \tilde{F} \Vert_{\Lip} \le \Vert F \Vert_{\Lip} (L + 1)$, \eqref{eq:fcb}
	corresponds to a special case of \cite[Theorem 1.1]{L17}, which implies the upper bound 
	$$
	\biggl| \int F \, d \nu_{I} 
	- \int F d ( \nu_{I_1} \otimes  \nu_{I_2} ) \biggr| \le C_{\alpha_*} (L+1)  \Vert F \Vert_{\Lip} R(i_{ p_1 + 1 } - i_{p_1}),
	$$
	where $C_{\alpha_*} > 0$ is a constant depending only on $\alpha_*$ and $R(n) = n^{ 1 - 1 / \alpha_* } ( \log n )^{ 1 / \alpha_* }$ for $n \ge 2$ and $R(1) = 1$, as 
	desired.
\end{proof}

\begin{thm}\label{thm:lsv_nonstat}
	Let $N \ge 1$.
	Assume that, in addition to \eqref{eq:ass_lsv}, $B > 0$.
	Then, there exists a constant $C_{\alpha_*} > 0$ depending only on $\alpha_*$ such that for any $g \in \sM_0(C_0)$,
	\begin{align}\label{eq:1227}
		| \mu [ g(W_N) ] - \bfE[g(Z)] |  \le 
		C_{\alpha_*}  C_\# \Vert g \Vert_\sM 
		( \rho _\gamma (N)   
		+    1  )  B^{-3/2}   N,
	\end{align}
	where $C_\# = (C_0 + 1)( L + 1 )^{9/2}$ and 
	\begin{align}\label{eq:rho_lsv}
		\rho(N)=
		\begin{cases}	
			1\quad  &(\alpha_* < 1/3)\\
			1 + (\log N)^4 \quad  &(\alpha_*= 1/3)\\
			N^{ 3 - 1/\alpha_* } (  \log N )^3 \quad  &(\alpha_* > 1/3)
		\end{cases}
	\end{align}
	In particular,  $| \mu [ g(W_N) ] - \bfE[g(Z)] | $ converges to zero as 
	$N \to \infty$ provided that $B \gg (\rho(N)  N )^{2/3}$. Further, assuming 
	$B^{-1} = O(N^{-1})$, convergence of $| \mu [ g(W_N) ] - \bfE[g(Z)] | $
	 to zero holds if $\alpha_* < 2/5$.
\end{thm}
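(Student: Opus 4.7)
The plan is to apply Theorem \ref{thm:main} directly with the functional correlation bound from Lemma \ref{lem:fcb_lsv} (which yields \eqref{eq:fcb} with $C_* = C_{\alpha_*}(L+1)$ and rate $R(n) = n^{1-1/\alpha_*}(\log n)^{1/\alpha_*}$ for $n \ge 2$, $R(1)=1$), and then to identify the dominant terms in $C_N$ as a function of $\alpha_* < 1/2$. Because $1 - 1/\alpha_* < -1$, the function $R$ is eventually decreasing, so up to a bounded perturbation on a window depending only on $\alpha_*$ one has $\hat{R}(n) \asymp R(n)$.

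The next step is to compute the asymptotics of $\hat{R}_1(N), \hat{R}_2(N), \hat{R}_3(N)$. Up to $\alpha_*$-dependent constants,
\[
\hat{R}_k(N) \lesssim 1 + \sum_{j=2}^{N-1} j^{\,k-1/\alpha_*}(\log j)^{1/\alpha_*}.
\]
For $k=1$ the exponent is strictly less than $-1$, so $\hat{R}_1(N) = O(1)$. For $k=3$ the exponent is $2 - 1/\alpha_*$; elementary integral comparison separates three cases and produces $\hat{R}_3(N) \lesssim \rho(N)$ in the sense of \eqref{eq:rho_lsv}, where in the regime $\alpha_* \in (1/3, 1/2)$ one uses the crude bound $(\log N)^{1/\alpha_*} \le (\log N)^3$. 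For $k=2$, Cauchy--Schwarz gives $\hat{R}_2(N) \le \hat{R}_1(N)^{1/2} \hat{R}_3(N)^{1/2}$, so combined with $\hat{R}_1(N) = O(1)$ one obtains $\hat{R}_2(N) \le C_{\alpha_*}(1 + \hat{R}_3(N))$. Substituting these estimates into the defining formula for $C_N$ and absorbing $(C_*+1)^{3/2} \le C_{\alpha_*}(L+1)^{3/2}$, one arrives at
\[
C_N \le C_{\alpha_*}\, C_\#\, (1 + \rho(N)),
\]
after which \eqref{eq:1227} is immediate from the bound \eqref{eq:bound_main}.

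The two asymptotic conclusions follow by inspection of \eqref{eq:1227}. The right-hand side tends to zero precisely when $B \gg (\rho(N)\,N)^{2/3}$, and if $B^{-1} = O(N^{-1})$ this condition reduces to $\rho(N) = o(N^{1/2})$, which holds iff $3 - 1/\alpha_* < 1/2$, i.e.\ $\alpha_* < 2/5$. The work is essentially book-keeping; the only technical point worth flagging is that one should \emph{not} use the coarser simplification of $C_N$ in the remark following Theorem \ref{thm:main}, which would yield a factor $(1+\rho(N))^{3/2}$ in place of $(1+\rho(N))$. Avoiding that loss is possible precisely because $\hat{R}_1(N)$ is bounded when $\alpha_* < 1/2$, so the middle term $C_0 C_*^{3/2} \hat{R}_1^{1/2}(1 + \hat{R}_2)$ of $C_N$ is dominated by $(1 + \hat{R}_3(N))$ rather than by its $3/2$-power.
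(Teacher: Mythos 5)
Your proposal is correct and follows exactly the route the paper takes: the paper's own proof of this theorem is a single sentence ("apply Theorem \ref{thm:main} with Lemma \ref{lem:fcb_lsv} as in the proof of Theorem \ref{thm:pikovsky}"), and your write-up supplies precisely the bookkeeping that that sentence delegates, including the crucial observation that $\hat R_1(N)=O(1)$ for $\alpha_*<1/2$ so that the middle term of $C_N$ is dominated by $1+\hat R_3(N)$ rather than its $3/2$-power. The Cauchy--Schwarz step $\hat R_2\le \hat R_1^{1/2}\hat R_3^{1/2}$, the case split on $\alpha_*\lessgtr 1/3$ for $\hat R_3$, the coarsening $(\log N)^{1/\alpha_*}\le(\log N)^3$, and the endpoint computation giving $\alpha_*<2/5$ from $\rho(N)=o(N^{1/2})$ all match what the paper does implicitly via its reference to the Pikovsky proof.
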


\begin{proof}
	The result follows by applying Theorem \ref{thm:main} together with Lemma \ref{lem:fcb_lsv} as in the proof of Theorem \ref{thm:pikovsky}. 
\end{proof}

\subsubsection{Autonomous compositions} Suppose that $X_n = f \circ T^n$, where $T$ denotes the map \eqref{eq:lsv} with parameter $\alpha < 1/2$,
and $f : [0,1] \to \bR$ is  Lipschitz continuous.  Suppose that  $\mu(f) = 0$, 
where $\mu$ is the unique absolutely continuous invariant probability measure of $T$  (see \cite{LSV99} for the existence of such a measure).
This setting is a special 
case of  \eqref{eq:time-dep}, obtained by taking $f_n = f$, $T_n = T$ for each $n \ge 1$, and $\mu$ as the invariant measure. 
As a  consequence Theorem \ref{thm:lsv_nonstat} we obtain the following result:

\begin{cor} Assume that $f$ is not a coboundary. Then, $\tilde{B} := \inf_{N \ge 1}  N^{-1} B_N > 0$. Moreover, for $N \ge 1$,
\begin{align*}
	| \mu [ g(W_N) ] - \bfE[g(Z)] |  \le 
	C_{\alpha} C_\#  \Vert g \Vert_\sM 
	( \rho _\gamma (N)   
	+    1  )  \tilde{B}^{-3/2}   N^{-1/2} ,
\end{align*}
where $C_\alpha$ is a constant depending only on $\alpha$, $C_\# = (C_0 + 1)(  \Vert f \Vert_{ \Lip } + 1 )^{9/2}$, and $\rho(N)$ is defined as in 
\eqref{eq:rho_lsv} for $\alpha_* = \alpha$.
\end{cor}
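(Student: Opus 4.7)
The plan is to deduce this corollary as a direct specialization of Theorem \ref{thm:lsv_nonstat} to the autonomous case, with the extra work concentrated in verifying the non-degeneracy $\tilde B > 0$ under the non-coboundary hypothesis on $f$.

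First I would put the corollary in the framework of Theorem \ref{thm:lsv_nonstat} by setting $f_n = f$ and $T_n = T$ for every $n \ge 1$, and taking $\mu$ to be the unique absolutely continuous $T$-invariant probability measure of \cite{LSV99} (whose density lies in $\cC_*(\alpha)$). Then $\alpha_* = \alpha < 1/2$, $\sup_n \Vert f_n \Vert_{\Lip} = \Vert f \Vert_{\Lip}$, and $\mu(f_n \circ T_{1,n}) = \mu(f \circ T^n) = \mu(f) = 0$ by $T$-invariance and the standing assumption $\mu(f) = 0$. This confirms the hypotheses \eqref{eq:ass_lsv} of Theorem \ref{thm:lsv_nonstat}.

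Next, to prove $\tilde B := \inf_{N \ge 1} N^{-1} B_N > 0$, I would mirror the argument used in Theorem \ref{thm:pikovsky}. By Lemma \ref{lem:fcb_lsv}, the functional correlation rate function $R(n) = n^{1-1/\alpha}(\log n)^{1/\alpha}$ is summable for $\alpha < 1/2$, so applying \eqref{eq:fcb} to the separately Lipschitz pairing $F(x,y) = xy$ yields $\sum_{n \ge 1} |\mu(f \cdot f \circ T^n)| < \infty$. Consequently the Green--Kubo series
\[
\Sigma := \mu(f^2) + 2 \sum_{n=1}^\infty \mu(f \cdot f \circ T^n)
\]
converges absolutely and $\lim_{N \to \infty} N^{-1} B_N = \Sigma$, exactly as in \eqref{eq:Sigma}. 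By the classical Leonov--Gordin dichotomy (cf.\ \cite{L95, G06, KKM18}), the non-coboundary assumption on $f$ forces $\Sigma > 0$, hence $N^{-1} B_N \ge \Sigma/2$ for all $N$ sufficiently large. For the remaining finitely many small $N$, I would reuse the stationarity trick from the proof of Theorem \ref{thm:pikovsky}: if $B_N = 0$ for some $N$, then $\sum_{n=0}^{N-1} X_n$ is $\mu$-a.s.\ constant, and $T$-invariance forces $B_{kN} = 0$ for every $k \ge 1$, contradicting $\lim_k B_{kN}/(kN) = \Sigma > 0$. Therefore $\tilde B > 0$.

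Finally, to obtain the stated error bound, I would plug these observations into \eqref{eq:1227} of Theorem \ref{thm:lsv_nonstat} with $L = \Vert f \Vert_{\Lip}$ and rewrite
\[
B^{-3/2} N = (B/N)^{-3/2} N^{-1/2} \le \tilde B^{-3/2} N^{-1/2},
\]
so that the constant $C_\alpha$ and the rate function $\rho(N)$ (the intended reading of $\rho_\gamma(N)$ in the statement) are inherited directly from Theorem \ref{thm:lsv_nonstat} with $\alpha_* = \alpha$. The only genuinely non-routine step in this plan is the strict positivity $\Sigma > 0$ coming from the non-coboundary condition; once that is in hand, everything else is bookkeeping against the nonautonomous theorem already proved.
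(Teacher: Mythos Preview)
Your proposal is correct and follows essentially the same approach as the paper: specialize Theorem \ref{thm:lsv_nonstat} to the autonomous case and establish $\tilde B > 0$ by the Green--Kubo/non-coboundary argument exactly as in the proof of Theorem \ref{thm:pikovsky}. The paper's own proof is a two-line summary of precisely the steps you have spelled out.
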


\begin{proof} As in the proof of Theorem \ref{thm:lsv_nonstat} we find that $\tilde{B} > 0$ provided 
that $\alpha < 1/2$. Thus, the result follows directly from Theorem \ref{thm:lsv_nonstat}.
\end{proof}

\subsection{Application 3: Random Lasota--Yorke maps}

We consider random dynamical systems of piecewise expanding interval maps, as studied in \cite{DFGV18, B99} among others. To specify the model, let $\cE$ denote the collection of all maps $T : M \to M$ on $M = [0,1]$ such that there exists a finite partition $\cA(T)$ of $M$ into subintervals satisfying the following conditions for every $I \in \cA(T)$:

\begin{itemize}
	\item[(1)] $T |_I$ extends to a $C^2$ map in a neighborhood of $I$; \smallskip
	\item[(2)] $\delta(T) := \min_{I\in\cA(T)}\inf |(T\vert _I)'| > 1.$
\end{itemize}
The map $T$ is monotonic on each element $I \in \cA(T)$. From now on, we take 
$\cA(T)$ to be the minimal partition of monotonicity and define $n_0(T) := |\cA(T)|$.

Let $(\Omega, \cB, \bP)$ be a probability space, and let $\tau : \Omega \to \Omega$ be a measurably invertible transformation.
We consider a map $\omega \mapsto T_{\omega}$ from $\Omega$ into $\cE$. Random compositions of maps are denoted by
$
T^n_{\omega} = T_{\tau^{n-1}\omega} \circ \cdots \circ T_{\omega}
$.
Moreover, let $\cL^n_{\omega} = \cL_{\tau^{n-1}\omega} \cdots \cL_{\omega}$, where $\cL_\omega : L^1(\lambda) \to L^1(\lambda)$ is the transfer operator associated with $T_\omega$ and $\lambda$, with $\lambda$ being the Lebesgue measure on $M$. We assume the following conditions.

\noindent\textbf{Conditions (RLY):}
\begin{itemize}
	\item[(i)]{$\tau : \Omega \to \Omega$ is $\bP$-preserving and ergodic.} \smallskip
	\item[(ii)]{The map $(\omega, x) \mapsto (\cL_{\omega} H(\omega, \cdot) )(x)$ is measurable for every measurable function $H : \Omega \times M \to \bR$ such that $H(\omega, \cdot) \in L^1(\lambda)$. } \smallskip
	\item[(iii)]{$n_0 := \sup_{\omega \in \Omega} n_0(T_\omega) < \infty$; $\delta : =  \inf_{\omega \in \Omega} \delta(T_\omega) > 1$; $D := \sup_{\omega \in \Omega} |T''_\omega| < \infty$.
	} \smallskip
	\item[(iv)]{There exists $n \ge 1$ such that $\delta^n > 2$ and $\text{ess inf}_{\omega \in \Omega} \min_{J \in \cA(T_\omega^n)} \lambda(J) > 0$.} \smallskip
	\item[(v)]{For every subinterval $J \subset M$ there is $k = k(J) \ge 1$ such that $T^k_\omega(J) = M$ holds for almost every $\omega \in \Omega$.}
\end{itemize}

By \cite[Proposition 1]{DFGV18}, assuming (RLY),  there exists a unique measurable and nonnegative function $h : \Omega \times M \to \bR$ such that \(h_\omega := h(\omega, \cdot)\) satisfies \(\lambda (h_\omega) = 1\), 
and \(\cL_\omega(h_\omega) = h_{\tau(\omega)}\) for $\bP$-almost every \(\omega \in \Omega\). Moreover, \(\textnormal{ess sup}_{\omega \in \Omega} \|h_{\omega}\|_{BV} < \infty\).
Here, 
$\Vert h \Vert_{BV} = \Vert h \Vert_{L^1(m)} + V(  h)$ where 
$V( h )$ denotes the total variation of a function $h: M \to \bR$.
In addition, the skew product $\varphi(\omega, x) = (\tau(\omega), T_{\omega}(x))$ preserves the measure \(\nu\) on \(\Omega \times M\), defined by 
\[
\nu(A \times B) = \int_{A \times B} h \, d(\bP \otimes m), \quad A \in \cB, \, B \in \cF, 
\]
where \(\cF\) is the Borel \(\sigma\)-algebra on \(M\).
Denote by $(\nu_\omega)_{\omega\in\Omega}$ the disintegration of $\nu$.

\begin{lem}\label{lem:fcb_LY} Let $(f_{n,\omega})_{n \ge0, \omega \in \Omega}$ be a family of functions $f_{n,\omega} : M \to \bR$
	such that, for all $n \ge 0$, 
	$$\textnormal{ess sup}_{\omega \in \Omega} \Vert f_{n, \omega} \Vert_\vartheta
	\le  L$$
	for some $\vartheta\in (0,1]$. Consider the random process $(X_n)_{n \ge 0}$ on $(M, \cF, \nu_\omega)$ defined by 
	\[
	X_n= f_{n, \omega} \circ T_\omega^n.
	\]
	Then, for all $N \ge 1$ and for 
	$\bP$-almost every $\omega \in \Omega$, 
	$(X_n)_{0\le n<N}$ satisfies \eqref{eq:fcb} with $\mu=\nu_\omega$ and
	$$
	C_* = C (L+1) \Vert h_\omega \Vert_{BV},  \quad R(n) = \theta^n, 
	$$
	where $\theta \in (0,1)$ and $C > 0$ are constants depending only on 
	$\vartheta$ and the random dynamical system $(T_\omega)_{\omega \in \Omega}$.
\end{lem}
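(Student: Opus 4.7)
The plan is to reduce the statement to an existing functional correlation bound for random piecewise expanding interval maps established in \cite{LS20} by a substitution analogous to the one used in the proof of Lemma \ref{lem:fcb_lsv}. Fix $N \ge 1$ and, modulo a $\bP$-null set, $\omega \in \Omega$. Given $I = I_1 \cup I_2$ with $I_1 < I_2$ and a separately Lipschitz $F : [-L,L]^{|I|} \to \bR$, introduce the auxiliary function
\[
\tilde F(x_1,\ldots,x_n) := F\bigl(f_{i_1,\omega}(x_1),\ldots,f_{i_n,\omega}(x_n)\bigr).
\]
Using $X_n = f_{n,\omega} \circ T_\omega^n$ and disintegrating with respect to $\nu_\omega$ exactly as in \eqref{eq:rewrite_fcb}, one obtains
\begin{align*}
&\int F \, d\nu_I - \int F \, d(\nu_{I_1} \otimes \nu_{I_2}) \\
&\quad = \int_M \tilde F(T_\omega^{i_1}x,\ldots, T_\omega^{i_n}x)\,d\nu_\omega(x) \\
&\qquad - \iint_{M^2} \tilde F(T_\omega^{i_1}x,\ldots,T_\omega^{i_{p_1}}x,T_\omega^{i_{p_1+1}}y,\ldots,T_\omega^{i_n}y)\,d\nu_\omega(x)\,d\nu_\omega(y).
\end{align*}

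Next I would verify that $\tilde F$ inherits separate $\vartheta$-H\"older regularity from the essential bound $\textnormal{ess sup}_{\omega}\|f_{n,\omega}\|_\vartheta \le L$. Freezing all but the $i$th coordinate and combining $\|F\|_{\Lip}$ with $|f_{i_i,\omega}(a)-f_{i_i,\omega}(a')|\le L|a-a'|^\vartheta$ yields $[\tilde F]_\vartheta \le L\,\|F\|_{\Lip}$, while $\|\tilde F\|_\infty \le \|F\|_\infty \le \|F\|_{\Lip}$. Consequently $\|\tilde F\|_\vartheta \le (L+1)\|F\|_{\Lip}$.

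Then I would invoke the quenched functional correlation bound for random Lasota--Yorke systems proved in \cite{LS20}, which under the conditions (RLY) furnishes an estimate of the form
\[
\biggl|\int_M G\,d\nu_\omega^{(I)} - \iint_{M^2} G\,d(\nu_\omega^{(I_1)}\otimes \nu_\omega^{(I_2)})\biggr| \le C\,\|h_\omega\|_{BV}\,\|G\|_\vartheta\,\theta^{g_1}
\]
for $\bP$-almost every $\omega$ and every separately $\vartheta$-H\"older $G$, with $\theta \in (0,1)$ and $C > 0$ depending only on $\vartheta$ and on the random system $(T_\omega)_{\omega \in \Omega}$ (with $g_1 = i_{p_1+1} - i_{p_1}$ the gap between $I_1$ and $I_2$). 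Applying this with $G = \tilde F$ and combining with the bound on $\|\tilde F\|_\vartheta$ above gives the required estimate with $C_* = C(L+1)\|h_\omega\|_{BV}$ and $R(n) = \theta^n$.

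The main obstacle is matching the precise formulation and hypotheses of the relevant theorem in \cite{LS20} to the present setting: one must confirm that the bound can be stated fiberwise (for $\bP$-a.e.\ $\omega$), that the constants $C$ and $\theta$ are $\omega$-independent, and that the prefactor scales linearly in both $\|h_\omega\|_{BV}$ and the separate H\"older constant of the test function. The exponential rate itself is expected to come from the uniform spectral gap for the quenched transfer operators $\cL_\omega$ acting on $BV$, guaranteed by (iii)--(v); once this is in place, the remaining steps are routine.
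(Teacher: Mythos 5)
Your proposal is correct and follows essentially the same route as the paper: the same substitution $\tilde F(x_1,\ldots,x_n)=F(f_{i_1,\omega}(x_1),\ldots,f_{i_n,\omega}(x_n))$, the same verification that $\|\tilde F\|_\vartheta \le (L+1)\|F\|_{\Lip}$, the same rewriting of $\int F\,d\nu_I-\int F\,d(\nu_{I_1}\otimes\nu_{I_2})$ as in \eqref{eq:rewrite_fcb}, and then an appeal to the functional correlation bound from \cite{LS20}. The only thing left implicit in your write-up is the precise reference; the paper cites \cite[Proposition 3.5]{LS20}, which is exactly the fiberwise bound with $\omega$-independent constants and linear dependence on $\|h_\omega\|_{BV}$ and the separate Hölder seminorm that you describe as needing to be verified.
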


\begin{proof}
	Given a separately Lipschitz continuous function $F : [-L, L]^n \to \bR$ and index sets 
	$I, I_1, I_2$ as in \eqref{eq:fcb}, the function 
	$\tilde{F}(x_1, \ldots, x_n) = F(f_{i_1, \omega }(x_1), \ldots, f_{i_n, \omega}(x_n))$ satisfies 
	\[
	\text{ess sup}_{\omega \in \Omega} \|\tilde{F}\|_{\vartheta} \leq (L+1)\|F\|_{\Lip}.
	\]
	By rewriting $\int F \, d\nu_{I} - \int F \, d(\nu_{I_1} \otimes \nu_{I_2})$ as in \eqref{eq:rewrite_fcb}, we find
	that the result is a direct consequence of \cite[Proposition 3.5]{LS20}.
\end{proof}

Now, given a measurable bounded function $f : M \to \bR$, we set 
\[
f_{n, \omega} = f - \nu_\omega(f \circ T_\omega^n), \quad X_n = f_{n, \omega} \circ T_\omega^n.
\]
We consider $(X_n)_{n\ge 0}$ as a random process on $(M, \cF, \nu_\omega)$
and define the quantities $W$, $B$, and $Z$ as in Section~\ref{sec:main_thm_statement}.

\begin{thm} Suppose that $f : M \to \bR$ satisfies $\Vert f \Vert_{\vartheta}  \le L/2$ for 
	some $\vartheta \in (0,1]$,
	and that the function $\tilde{f}(\omega, x) = \tilde{f}_\omega(x) := f(x) - \nu_\omega(f)$ is 
	not a coboundary\footnote{I.e., $\tilde{f}$ cannot be written as $v  - v \circ \varphi$ for any $v \in L^2(\Omega \times M, \nu)$.}. Then, there exists 
	$N_0 = N_0(\omega) \ge 1$ such that 
	$\tilde{B} := \inf_{N \ge N_0} N^{-1} B_N > 0$ for $\bP$-a.e.~$\omega \in \Omega$. 
	Moreover, for any $g \in \sM_0(C_0)$ and $N \ge N_0$, 
	the following estimate holds for $\bP$-a.e.~$\omega \in \Omega$.
	\begin{align*}
		&| \nu_\omega [ g(W_N) ] - \bfE[g(Z)] |  \le 
		C C_\#  \Vert g \Vert_{\sM}  \tilde{B}^{-3/2} N^{-1/2}.
	\end{align*}
	Here, $C > 0$ is a constant depending only on $\vartheta$ and the random dynamical system $(T_\omega)_{\omega \in \Omega}$, and 
	$$
	C_\# =  (L + 1)^{ 9 / 2 } ( \Vert h_\omega \Vert_{BV} + 1  )^{3/2}   ( 
	1 +   C_0 ).
	$$
\end{thm}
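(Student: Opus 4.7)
The plan is to apply Theorem \ref{thm:main} with the functional correlation bound supplied by Lemma \ref{lem:fcb_LY}, following the same template that served for the Pikovsky and LSV cases. First, I verify that \eqref{eq:assume_rv} holds. The hypothesis $\Vert f \Vert_\vartheta \le L/2$ forces $\Vert f \Vert_\infty \le L/2$, hence $|\nu_\omega(f\circ T_\omega^n)| \le L/2$ for every $n$, so $\Vert X_n \Vert_\infty \le L$, while $\nu_\omega(X_n) = 0$ is immediate from the definition of $f_{n,\omega}$. The family $(f_{n,\omega})$ satisfies $\textrm{ess sup}_\omega \Vert f_{n,\omega}\Vert_\vartheta \le L$ up to absorbing a harmless constant into $L$, so Lemma \ref{lem:fcb_LY} yields \eqref{eq:fcb} with $C_* = C(L+1)\Vert h_\omega \Vert_{BV}$ and $R(n) = \theta^n$ for $\bP$-a.e.\ $\omega$.

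Since $R(n) = \theta^n$ decays geometrically, the sums $\hat R_1(N)$, $\hat R_2(N)$, $\hat R_3(N)$ appearing in Theorem \ref{thm:main} are all bounded uniformly in $N$ by constants depending only on $\theta$. Substituting, the combinatorial factor $C_N$ in \eqref{eq:bound_main} collapses to
$$
C_N \le C(L+1)^3 (C_*+1)^{3/2}(C_0+1) \le C(L+1)^{9/2}(\Vert h_\omega\Vert_{BV}+1)^{3/2}(C_0+1),
$$
which matches $C \cdot C_\#$ after absorbing absolute constants. Combined with $B^{-3/2}N = \tilde B^{-3/2} N^{-1/2}$, this converts \eqref{eq:bound_main} into the asserted error bound, provided $\tilde B > 0$ can be established for $N \ge N_0(\omega)$.

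The main obstacle is precisely this positivity of $\tilde B$. The idea is to exploit the skew product structure: along the fiber over $\omega$ one has $X_n = \tilde f \circ \varphi^n$, where $\varphi$ preserves the ergodic probability $\nu$. The exponential correlation bound from Lemma \ref{lem:fcb_LY} applied to $F(x_1,x_2) = x_1 x_2$ gives $|\nu_\omega(X_iX_j)| \le C\Vert h_\omega\Vert_{BV}\theta^{|j-i|}$, from which summability of the Green--Kubo series
$$
\Sigma := \nu(\tilde f^2) + 2 \sum_{n=1}^\infty \nu(\tilde f \cdot \tilde f \circ \varphi^n)
$$
follows, and a standard argument identifies $\Sigma = 0$ with $\tilde f$ being an $L^2(\nu)$-coboundary for $\varphi$. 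Under the non-coboundary hypothesis we therefore have $\Sigma > 0$. A quenched variance limit $N^{-1} \nu_\omega(S_N^2) \to \Sigma$ for $\bP$-a.e.\ $\omega$, available either from the quenched invariance principle of \cite{DFGV18} or from a martingale-approximation argument applied to $\varphi$, then supplies an $N_0(\omega)$ with $B_N/N \ge \Sigma/2$ for all $N \ge N_0(\omega)$, completing the proof. This step is the delicate one because, unlike the autonomous Pikovsky and LSV settings, stationarity along the fiber fails, and one must genuinely pass through the skew product to secure the asymptotic variance.
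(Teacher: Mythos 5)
Your proposal is correct and follows essentially the same route as the paper: apply Lemma \ref{lem:fcb_LY} to obtain \eqref{eq:fcb} with exponential rate, plug into Theorem \ref{thm:main}, and invoke the quenched variance limit from \cite{DFGV18} (via the skew-product $\varphi$ and the non-coboundary hypothesis) to secure $\tilde B > 0$ for $N \ge N_0(\omega)$. The bookkeeping of $C_N$ into $C\,C_\#$ and the verification of $\Vert X_n\Vert_\infty \le L$, $\nu_\omega(X_n)=0$ match the paper's (terser) argument.
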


\begin{proof} It follows from \cite{DFGV18} that the non-coboundary condition implies that there exists a nonrandom $\sigma^2 > 0$ such that, for 
	$\bP$-a.e. $\omega \in \Omega$, 
	$$
	\lim_{N\to \infty} N^{-1} \nu_\omega  \biggl[ \biggl( \sum_{n=0}^{N-1}  X_n  \biggr)^2 \biggr] = \sigma^2.
	$$
	Therefore, $\tilde{B} = \inf_{N \ge N_0} N^{-1} B_N > 0$  for a sufficiently large $N_0$, and the 
	result follows by applying Theorem \ref{thm:main} together with Lemma \ref{lem:fcb_LY}.
\end{proof}

\subsection{Application 4: Dispersing billiards}  Let $\bT^2 = \bR^2 / \bZ^2$ be the two-torus equipped with finitely many scatterers
$S_i$, $i = 1,\ldots, I$, which are pairwise disjoint closed convex sets such that the boundary $\partial S_i$ of each 
$S_i$ is a $C^3$ curve with strictly positive curvature. The billiard flow is defined by the motion of a point 
particle moving linearly at unit speed on the billiard table $\cQ = \bT^2 \setminus \cup_{i=1}^I S_i$, undergoing 
specular reflections at collisions with the boundaries $\partial S_i$ of 
the scatterers. We assume the finite horizon condition, which ensures a finite uniform upper bound on the time between consecutive collisions in $\cQ$.

The billiard flow induces a discrete-time billiard map $T : M \to M$, also known as the collision map, which 
keeps track of the collisions only. We adopt standard coordinates at collisions, $x = (r, \varphi)$, where 
$r$ represents the position of the billiard particle on the boundary $\cup_{i=1}^I \partial S_i$, 
parametrized by arc length, and $\varphi$ is the angle between the particle's post-collision velocity vector and the normal pointing into the domain
$\cQ$. 
The two-dimensional phase space $M$ is
a disjoint union of cylinders, $M = \cup_{i=1}^I \partial S_i \times [-\pi/2, \pi /2]$, and for a pair $x = (r, \varphi) \in M$, 
$T(x) = (r', \varphi')$ is defined as the corresponding pair after the next collision. The map $T$ is invertible 
and preserves 
a smooth probability measure $\mu$ on $M$, namely $d \mu = C^{-1}_\mu \cdot \cos r \, dr \, d \varphi$ where 
$C_\mu = \int_M  \cos r \, dr \, d \varphi$ is the normalizing factor \cite[Section 2.12]{CM06}. Given a bounded measurable 
function $f : M \to \bR$, we set $X_n = f \circ T^n$ and consider $(X_n)$ as a random process on 
$(M, \cF, \mu)$, where the $\sigma$-algebra $\cF$ consists of all Borel subsets of $M$.

To describe our application, we recall a few standard constructions from the theory of dispersing billiards, 
with full details provided in \cite{CM06}. Each cylinder $M_i =  \partial S_i \times [-\pi/2, \pi /2]$ is divided
into countably many connected components 
$M_{i,k} = \partial S_i \times \{ b_k < \varphi < b_{k+1} \}$, $k \in \bZ$, 
called homogeneity strips \cite[Section 5.4]{CM06}, where the 
numbers $b_k$ 
satisfy $b_{k + 1} - b_k = O(k^{-3})$. These strips $M_{i,k}$
accumulate near the boundary 
$\partial M_i = \partial S_i \times \{ \cos(\varphi) = 0 \}$ corresponding to tangential collisions. 
For $x, y \in M$, the future separation time 
$s_+(x,y)$ is defined as the smallest integer $n \ge 0$ such that $T^n(x)$ and $T^n(y)$ lie in different 
components. The past separation time is defined identically, using the inverse $T^{-1}$ instead of $T$.
A local stable manifold $W^s(x)$ of a point $x \in M$
is a maximal $C^2$ curve $W^s(x)$ such that for each $n \ge 0$ there exist 
$i,k$ with $T^n W^s(x) \subset M_{i,k}$. Almost every $x \in M$ has a nontrivial local 
stable manifold, and the (uncountable) family of all local stable manifolds forms a measurable partition of $M$. Local unstable manifolds are defined similarly, replacing $T$ with $T^{-1}$.

We use the definition of dynamical Hölder continuity from \cite{S10}, stated below, 
which is a variant of a corresponding definition in \cite{C06}.
The fact that this property is dynamically closed 
in the sense of \cite[Lemma 4]{S10} is used in the proof of the functional correlation bound \cite[Theorem 2.3]{LS17}, upon which our application is based.

\begin{defn} 
	A function $f: M \to \bR$ is said to be dynamically Hölder continuous on local unstable manifolds with rate $\vartheta_f \in (0,1)$ and constant $c_f \geq 0$ if it satisfies 
	$$
	|f(x) - f(y)| \le c_f \vartheta_f^{s_+(x,y)}
	$$
	whenever $x$ and $y$ belong to the same local unstable manifold. Define $\cH_+(c_f, \vartheta_f)$ as the family of all such functions. 
	Similarly, a function $f: M \to \bR$ is said to be dynamically Hölder continuous on local stable manifolds if it satisfies 
	$$
	|f(x) - f(y)| \le c_f \vartheta_f^{s_-(x,y)}
	$$
	whenever $x$ and $y$ belong to the same local stable manifold. Define $\cH_-(c_f, \vartheta_f)$ as the family of all such functions.
\end{defn}

\begin{remark}
If $f:X\to\bR$ is H\"older continuous with exponent $\alpha\in(0,1)$ and constant $[f]_\alpha$, then $f \in\cH_-(c,\vartheta)\cap \cH_+(c,\vartheta)$, where $c = [f]_\alpha$, and $\vartheta = \vartheta(\alpha)$ is determined by~$\alpha$ and system constants.
\end{remark}

\begin{lem}\label{lem:fcb_billiards}
Suppose that $f \in \cH_{-}(c_f, \vartheta_f) \cap \cH_{+}(c_f, \vartheta_f)$. 
Then, for all $N \ge 1$, $(X_n)$ satisfies \eqref{eq:fcb} with 
$$
C_* = C \biggl(   \frac{c_f}{1 - \vartheta_f}  + 1   \biggr), \quad R(n) = \theta^{n},
$$
where $\theta = \max\{  \vartheta_f, \theta_0 \}^{1/4}$, and $C > 0$, $\theta_0 \in (0,1)$ 
are system constants.
\end{lem}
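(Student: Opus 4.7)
The plan is to reduce the claimed functional correlation bound to an application of the functional correlation bound for dispersing billiards established in \cite[Theorem 2.3]{LS17}. That result provides a bound of the form
\[
\biggl| \int F \, d \nu_{I} - \int F \, d ( \nu_{I_1} \otimes \nu_{I_2} ) \biggr| \le C \theta^{g_1} \, \mathcal{K}(F, f)
\]
for dynamically H\"older $f$, where $\mathcal{K}(F,f)$ is a suitable norm and $\theta \in (0,1)$ is a system constant, mirroring exactly the structure of \eqref{eq:fcb}. So the task is to recast the hypotheses of Lemma \ref{lem:fcb_billiards} into the form needed to invoke \cite[Theorem 2.3]{LS17} and to track constants carefully enough to produce the specific expression $C_* = C( c_f/(1-\vartheta_f) + 1 )$.

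First, following the rewriting displayed in \eqref{eq:rewrite_fcb}, I would express the difference as
\[
\int_M G(x) \, d\mu(x) - \int_M\int_M H(x,y) \, d\mu(x) d\mu(y),
\]
with $G(x) = \tilde F(T^{i_1}x, \ldots, T^{i_n}x)$ and $H(x,y)$ obtained by replacing $T^{i_j}x$ with $T^{i_j}y$ for $j > p_1$, where $\tilde F(z_1,\ldots,z_n) = F(f(z_1),\ldots,f(z_n))$. The separately Lipschitz assumption on $F$ combined with $\|f\|_\infty \le L$ gives $\|\tilde F\|_\infty \le \|F\|_\infty$ and separate Lipschitz constants bounded by $\|F\|_{\Lip}$ times the local Lipschitz constants of $f$ (which are themselves controlled via the dynamical H\"older bound). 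In particular, for points $z, z'$ on the same local unstable manifold, $|\tilde F(\ldots, z, \ldots) - \tilde F(\ldots, z', \ldots)| \le \|F\|_{\Lip} \, c_f \, \vartheta_f^{s_+(z,z')}$, and an analogous bound holds on local stable manifolds via $s_-$.

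Second, I would check that the composite functions fall within the scope of \cite[Theorem 2.3]{LS17}. When two points $x, x'$ lie on the same local unstable manifold with future separation time $s_+(x,x') = s$, each pair $T^{i_j}x, T^{i_j}x'$ lies on the same local unstable manifold with separation time $s - i_j$ (for $s \ge i_j$), so summing the contributions yields a total variation of $G$ controlled by $\|F\|_{\Lip} \cdot c_f \sum_{j \ge 0} \vartheta_f^{j} \le c_f \|F\|_{\Lip}/(1 - \vartheta_f)$. The same holds for the past separation time on local stable manifolds. Thus $G \in \cH_+(C c_f \|F\|_{\Lip}/(1-\vartheta_f), \vartheta_f) \cap \cH_-(C c_f \|F\|_{\Lip}/(1-\vartheta_f), \vartheta_f)$, and analogously $H$ splits into parts depending respectively on past and future iterates satisfying the same kind of dynamical H\"older estimates. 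The additive $+1$ term in $C_*$ absorbs the $L^\infty$ norm of $F$ that appears in the norm $\mathcal{K}(F,f)$.

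The main obstacle is a bookkeeping one: carefully checking that the precise variant of \cite[Theorem 2.3]{LS17} (which is stated for a separate class of test functions) applies to $G$ and $H$ with the stated constants, in particular that the $L^\infty$ and dynamical H\"older pieces combine to give $C_*$ of the displayed form and that the rate $R(n) = \theta^n$ inherits an exponent involving $\max\{\vartheta_f, \theta_0\}^{1/4}$ from the contraction rates in \cite{LS17}. Given the dynamical closedness property of $\cH_\pm$ (cf.~\cite[Lemma 4]{S10}) used in \cite{LS17}, this reduction is essentially routine once the observations above are in place.
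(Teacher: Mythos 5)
Your approach diverges from the paper's in two material ways, and one of them produces a genuine gap.

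First, a bookkeeping discrepancy: the paper's proof invokes \cite[Theorem 2.4]{LS17}, not \cite[Theorem 2.3]{LS17}. These are different statements. Theorem 2.4 is tailored to multi-time observables of the form $y \mapsto \tilde F(T^{i_1}y, \ldots, T^{i_n}y)$ whose \emph{one-variable sections} are dynamically H\"older; the paper verifies exactly this hypothesis and stops. Specifically, for each $1 \le i \le n$ and each fixed $x \in M^n$, the section $\varphi_i(y) = \tilde F(x(y/i))$ satisfies $\varphi_i \in \cH_-([F]_1 c_f, \vartheta_f) \cap \cH_+([F]_1 c_f, \vartheta_f)$, since only the single argument $f(y)$ varies. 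Note the constant is $[F]_1 c_f$, with no geometric series; that sum is carried out \emph{inside} \cite[Theorem 2.4]{LS17} and produces the $c_f/(1-\vartheta_f)$ factor in $C_*$.

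Second, and more seriously, the step in which you assert $G \in \cH_+\bigl(C\,c_f\|F\|_\Lip/(1-\vartheta_f),\ \vartheta_f\bigr)$ does not follow from your own computation. Membership in $\cH_+(c,\vartheta_f)$ requires $|G(x) - G(x')| \le c\, \vartheta_f^{s_+(x,x')}$, i.e.\ a bound decaying exponentially in the separation time. Your telescoping gives
\[
|G(x) - G(x')| \le [F]_1 c_f \sum_{j\,:\,i_j < s} \vartheta_f^{\,s - i_j},
\]
where $s = s_+(x,x')$, and since the exponents $s - i_j$ are distinct nonnegative integers, this is at most $[F]_1 c_f/(1-\vartheta_f)$ — a \emph{uniform} bound, not one proportional to $\vartheta_f^{s}$. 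Any factor of $\vartheta_f$ you can extract scales like $\vartheta_f^{\,s - i_n}$ for the largest relevant index $i_n$, so the apparent H\"older constant grows like $\vartheta_f^{-i_n}$, which is unbounded over the index sets allowed in \eqref{eq:fcb}. In addition, for indices with $i_j \ge s$ the pair $T^{i_j}x, T^{i_j}x'$ need not lie on the same unstable manifold, so those contributions are not controlled by the dynamical H\"older hypothesis at all; you have silently dropped them. Consequently the reduction to a single one-variable dynamically H\"older observable $G$ (and then to the single-time correlation bound \cite[Theorem 2.3]{LS17}) does not go through. The fix is precisely the paper's move: work with the separately H\"older sections $\varphi_i$ (constant $[F]_1 c_f$, no sum) and let the multi-time decoupling theorem \cite[Theorem 2.4]{LS17} absorb the summation over iterates and deliver both the $c_f/(1-\vartheta_f)$ constant and the rate $\theta^{g_1}$.

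Your first step — rewriting the FCB difference via $\tilde F$ as in \eqref{eq:rewrite_fcb} and noting the $L^\infty$ control — is in line with the paper.
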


\begin{proof} 
Let $I, I_1, I_2$, and $F$ be as in Definition \ref{defn:fcb}. As in the proof of 
Lemma \ref{lem:fcb_lsv}, define the function 
$$
\tilde{F}(x_1, \ldots, x_n) = F( f(x_1), \ldots, f(x_n) ).
$$
For each $x \in M^n$ and each $1 \le i \le n$, the function $\varphi_i : M \to \bR$, 
$\varphi_i(y) = F( x(y / i) )$, satisfies 
$\varphi_i \in \cH_{-}([F]_1 c_f, \vartheta_f) \cap \cH_+([F]_1 c_f, \vartheta_f)$. Hence, the result follows as a direct consequence of \cite[Theorem 2.4]{LS17}.
\end{proof}

\begin{thm}\label{thm:billiards} Suppose that $f \in \cH_{-}(c_f, \vartheta_f) \cap \cH_{+}(c_f, \vartheta_f)$ satisfies $\mu(f) = 0$. If $f$ is not 
	a coboundary, then $\tilde{B} := \inf_{N \ge 1} N^{-1} B_N > 0$. 
	Moreover,
	for any $N \ge 1$ and $g \in \sM_0(C_0)$,
	\begin{align*}
		&| \mu [ g(W_N) ] - \bfE[g(Z)] | 
		\le 
		C C_\theta C_\#  \Vert g \Vert_{\sM} \tilde{B}^{-3/2} N^{-1/2},
	\end{align*}
	where $C$ is a system constant, $C_\theta$ is a constant depending only on $\theta$ in Lemma \ref{lem:fcb_billiards}, and 
	$$
	C_\# =  \biggl(   \frac{c_f}{1 - \vartheta_f}  + 1   \biggr)^{3/2}( 1  + C_0  ) ( \Vert f \Vert_\infty + 1 )^3.
	$$
\end{thm}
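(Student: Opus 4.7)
The plan is to follow the same blueprint used in the proof of Theorem \ref{thm:pikovsky}: reduce the statement to an application of Theorem \ref{thm:main} by combining the functional correlation bound of Lemma \ref{lem:fcb_billiards} with a standard CLT-type variance estimate for dispersing Sinai billiards.

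\medskip

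\noindent\textbf{Step 1: $\tilde B > 0$.} For two-dimensional finite horizon dispersing billiards, it is known (Young, Chernov) that correlations of dynamically H\"older observables decay exponentially; in particular, $\sum_{n \ge 1} |\mu(f \cdot f \circ T^n)| < \infty$ for $f \in \cH_-(c_f,\vartheta_f) \cap \cH_+(c_f,\vartheta_f)$. Hence the Green--Kubo series
\begin{equation*}
\Sigma := \mu(f^2) + 2 \sum_{n=1}^{\infty} \mu(f \cdot f \circ T^n)
\end{equation*}
converges absolutely and satisfies $\lim_{N \to \infty} N^{-1} B_N = \Sigma$. The non-coboundary hypothesis implies $\Sigma > 0$ (standard, e.g.\ the equivalence used in the proof of Theorem \ref{thm:pikovsky}). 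Thus there exists $N_0$ with $N^{-1} B_N \ge \Sigma/2$ for all $N \ge N_0$. For $N < N_0$ the argument from the proof of Theorem \ref{thm:pikovsky} again applies: if $B_N = 0$ for some $N \ge 1$, stationarity of $(X_n)$ forces $B_{kN}= 0$ for every $k \ge 1$, contradicting $B_N/N \to \Sigma > 0$. Therefore $\tilde B = \inf_{N \ge 1} N^{-1} B_N > 0$.

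\medskip

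\noindent\textbf{Step 2: Apply Theorem \ref{thm:main}.} Set $L = \|f\|_\infty$, so that $(X_n)_{0\le n<N}$ with $X_n = f\circ T^n$ verifies \eqref{eq:assume_rv}. By Lemma \ref{lem:fcb_billiards}, $(X_n)$ satisfies the functional correlation bound \eqref{eq:fcb} with
\begin{equation*}
C_* = C\!\left(\frac{c_f}{1-\vartheta_f} + 1\right), \qquad R(n) = \theta^n,\quad \theta \in (0,1).
\end{equation*}
Since $R$ is decreasing, $\hat R(n) = \theta^n$ for $n \ge 1$, and each of
\begin{equation*}
\hat R_1(N) = \sum_{j=0}^{N-1}\theta^j, \qquad \hat R_2(N) = \sum_{j=0}^{N-1}j^{1/2}\theta^j, \qquad \hat R_3(N) = \sum_{j=0}^{N-1}j\,\theta^j
\end{equation*}
is uniformly bounded by a constant $C_\theta$ depending only on $\theta$.

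\medskip

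\noindent\textbf{Step 3: Bookkeeping of the constant.} Using the simplified expression for $C_N$ from the remark following Theorem \ref{thm:main},
\begin{equation*}
C_N \le C(L+1)^3(C_0+1)(C_*+1)^{3/2}(\hat R_3(N)+1)^{3/2} \le C\, C_\theta \, C_\#,
\end{equation*}
where $C_\# = (\|f\|_\infty+1)^3(C_0+1)\bigl(c_f/(1-\vartheta_f)+1\bigr)^{3/2}$ as in the statement. Substituting into \eqref{eq:bound_main} and using $\tilde B > 0$ from Step 1 yields the announced estimate
\begin{equation*}
|\mu[g(W_N)] - \bfE[g(Z)]| \le C\, C_\theta\, C_\# \,\|g\|_\sM \, \tilde B^{-3/2}\, N^{-1/2}.
\end{equation*}

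\medskip

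The only non-mechanical step is Step 1, where one needs the exponential correlation decay for $f \in \cH_-\cap\cH_+$ (to guarantee convergence of the Green--Kubo series) together with the fact that, for this class of billiards and observables, the non-coboundary condition is equivalent to $\Sigma > 0$. Both of these are classical facts in the billiard literature; everything else is a direct plug-in into Theorem \ref{thm:main} and Lemma \ref{lem:fcb_billiards}.
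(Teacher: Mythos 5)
Your proposal is correct and follows essentially the same route as the paper: invoke exponential correlation decay for dynamically H\"{o}lder observables to get $\Sigma > 0$ from the non-coboundary hypothesis, deduce $\tilde B > 0$ exactly as in the proof of Theorem~\ref{thm:pikovsky}, and then plug Lemma~\ref{lem:fcb_billiards} (with its exponential rate $R(n)=\theta^n$, so $\hat R_1,\hat R_2,\hat R_3$ are $O(1)$) into Theorem~\ref{thm:main}. The bookkeeping of $C_\#$ is also consistent with the remark following Theorem~\ref{thm:main}.
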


\begin{proof} Since $\sum_{n=1}^\infty n |\mu(f \circ T^n \cdot f )|$ is summable, it is standard that 
the limit variance $\Sigma$ defined as in \eqref{eq:Sigma} is positive if $f$ is not a coboundary. Hence, 
$\tilde{B} > 0$ follows as in the proof of Theorem \ref{thm:pikovsky}.
The desired upper bound then follows directly from Theorem
\ref{thm:main} and Lemma \ref{lem:fcb_billiards}.
\end{proof}

\section{Review of Stein's method for Brownian approximations}\label{sec:stein}

In this section, we briefly review Stein's method in the context of 
diffusion approximations. We refer the reader to 
\cite{B90, K20, KDV17, BRZ23} for more details. Since the results of our 
paper concern approximation by the standard univariate Brownian motion, we shall review Stein's method 
in this case.  

Throughout this section, $C$ denotes an absolute constant,
the value of which may vary between different expressions.

\subsection{Stein's method for diffusion approximations} 
Recall the spaces of test functions 
$\sL$ and $\sM$ from Section \ref{sec:spaces}. 

A consequence of Proposition \ref{prop:M_norm} and Taylor's theorem is that, for each $g \in \sM$ and each $w,h \in D$,
\begin{align}\label{eq:taylor}
	\begin{split}
		&\Vert g'(w) \Vert \le K_g (1 + \Vert w \Vert^2_\infty ), \quad  \Vert g''(w) \Vert \le  
		K_g 
		 (1 + \Vert w \Vert_\infty ),  \\
		&| g(w + h) -g(w) - g'(w)[h]  - \tfrac12 g^{(2)}(w)[h^{(2)}] |  \le  K_g  \Vert h \Vert_\infty^3,
	\end{split}
\end{align}
where $K_g = C \Vert g \Vert_{\sM}$.

Let $Z$ be a standard Brownian motion on $[0,1]$. In 
\cite{B90}, Barbour derived a Stein equation for approximation by $Z$ as 
$\cA f = g - \bfE(g(Z))$
where $g \in \sM$ and $\cA$ is the generator of 
a Markov process whose stationary law is the Wiener measure. The construction of Brownian motion 
by Schauder functions was used to define the appropriate process. Below, we recall the construction of $\cA$ from \cite{B90}, along with some of its properties, which will be used in the sequel.

Let 
\begin{align}\label{eq:process_x_hat}
	\{ (\hat{X}_k(u), \: u \ge 0) \: : \: k =0,1,2,\ldots \}
\end{align}
be an i.i.d. collection of Ornstein--Uhlenbeck processes on $[0, \infty)$
with stationary law $N(0,1)$, i.e. independent processes such that each $\hat{X}_k$ weakly solves the 
stochastic differential equation
$$
d x_t = - x_t \, dt + \sqrt{2} \, d Z_t, \quad x_0 \sim N(0,1), \quad t \ge 0.
$$
Define 
$$
\hat{W}(t, u) = \sum_{k=0}^\infty \hat{X}_k(u) S_k(t), \quad 0 \le t \le 1, \quad u \ge 0,
$$
where the Schauder functions $S_k$ are given by
\begin{align*}
	S_0(t) = t, \quad S_k(t) = \int_0^t H_k(u) \, d u, \quad k \ge 1,
\end{align*}
and, for $2^n \le k < 2^{n+1}$, 
\begin{align*}
	H_k(u) &= 2^{n/2} \biggl\{ 
	\bfI[   2^{-n}k - 1 \le u \le 2^{-n} ( k + 1/2  ) - 1  ] \\
	&- 
	\bfI[   2^{-n} ( k + 1/2 ) - 1  \le u \le 2^{-n} ( k + 1  ) - 1  ]
	\biggr\}.
\end{align*}
The function $(t,u) \mapsto \hat{W}(t,u)$ is almost surely continuous, and for each $u$, $\hat{W}( \cdot, u)$ is distributed 
according to the Wiener measure. The following two equations correspond to \cite[equation (2.9)]{B90} and \cite[equation (2.11)]{B90}, respectively. We provide a sketch of the proof following \cite{B90}.

\begin{lem}\label{lem:gen}
The infinitesimal generator $\cA$ of the process $(\hat{W}(\cdot, u))_{u \ge 0}$ acts on any $f \in \sM$ in the following way:	
\begin{align}\label{eq:generator}
\cA f ( w ) = -  f'(w) [w] + \bfE  f''(w) [  Z^{(2)} ] = -  f'(w) [w]  + \sum_{k = 0}^\infty  f''(w) [ S_k^{(2)} ].
\end{align}
Here, $f''(w)[z^{(2)}]$ denotes $f''(w)[z,z]$ for $z\in D$.
\end{lem}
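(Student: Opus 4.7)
The approach is to compute the generator directly from its definition,
\[
\cA f(w) = \lim_{u \downarrow 0} \frac{1}{u}\bigl(\bfE[f(\hat{W}(\cdot, u)) \mid \hat{W}(\cdot,0) = w] - f(w)\bigr),
\]
by means of a second-order Fr\'{e}chet--Taylor expansion justified by the estimates \eqref{eq:taylor}. First I would record the conditional law of the OU coordinates: since $\hat{X}_k$ solves $dx = -x \, dt + \sqrt{2} \, dZ$ with $\hat{X}_k(0) = x_k$, we have $\hat{X}_k(u) \mid \hat{X}_k(0) = x_k \sim N(e^{-u} x_k,\, 1 - e^{-2u})$, independently across $k$. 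Since a Brownian sample path decomposes as $w = \sum_k x_k S_k$ in the L\'{e}vy--Ciesielski basis, the increment
\[
\Delta_u := \hat{W}(\cdot, u) - w = \sum_{k \ge 0} (\hat{X}_k(u) - x_k) S_k
\]
has conditional mean $\bfE[\Delta_u \mid w] = (e^{-u} - 1) w$ and conditional cross moments
\[
\bfE\bigl[(\hat{X}_k(u)-x_k)(\hat{X}_j(u)-x_j) \mid w\bigr] = (e^{-u}-1)^2 x_k x_j + \delta_{kj}(1 - e^{-2u}).
\]

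Next, Taylor's theorem yields $f(w+\Delta_u) - f(w) = f'(w)[\Delta_u] + \tfrac12 f''(w)[\Delta_u^{(2)}] + R_u$ with $|R_u| \le K_f \Vert \Delta_u \Vert_\infty^3$. Taking the conditional expectation given $w$ and expanding $f''(w)[\Delta_u^{(2)}]$ bilinearly in the Schauder basis,
\[
\bfE[f'(w)[\Delta_u] \mid w] = (e^{-u}-1)\, f'(w)[w],
\]
\[
\bfE\bigl[f''(w)[\Delta_u^{(2)}] \mid w\bigr] = (e^{-u}-1)^2 f''(w)[w,w] + (1 - e^{-2u}) \sum_{k \ge 0} f''(w)[S_k^{(2)}].
\]
Dividing by $u$ and sending $u \downarrow 0$, one uses $(e^{-u}-1)/u \to -1$, $(1-e^{-2u})/u \to 2$, $(e^{-u}-1)^2/u \to 0$, together with the standard moment estimate $\bfE \Vert \Delta_u \Vert_\infty^3 = O(u^{3/2})$ for OU-driven Schauder sums (so the cubic remainder is $o(u)$), producing
\[
\cA f(w) = -f'(w)[w] + \sum_{k \ge 0} f''(w)[S_k^{(2)}].
\]
The second equality $\sum_k f''(w)[S_k^{(2)}] = \bfE f''(w)[Z^{(2)}]$ then follows at once by expanding $Z = \sum_k N_k S_k$ with $N_k \sim N(0,1)$ i.i.d., since $\bfE[N_k N_j] = \delta_{kj}$.

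The main obstacle is the convergence of the series $\sum_k f''(w)[S_k^{(2)}]$ and the interchange of limits and summation implicit in the calculation above. Level $n$ carries $2^n$ hat functions with $\Vert S_k \Vert_\infty^2 \asymp 2^{-n}$, so the crude bound $|f''(w)[S_k^{(2)}]| \le \Vert f''(w) \Vert\, \Vert S_k \Vert_\infty^2$ is not summable aggregated over levels. I plan to circumvent this by first proving the identity for the truncated process $\hat{W}_N(\cdot, u) = \sum_{k=0}^N \hat{X}_k(u) S_k$, whose generator is immediate from the finite-dimensional OU calculation, and then passing to $N \to \infty$. Here the smoothness hypothesis \eqref{eq:smooth} (in the weaker integral form stated in the remark after \eqref{eq:smooth}) enters decisively: using the reproducing-kernel identity $\sum_k S_k(t) S_k(s) = \min(t,s) = \int_0^1 J_u(t) J_u(s) \, du$, the partial sums $\sum_{k=0}^N f''(w)[S_k^{(2)}]$ are recast as Riemann-type sums converging to $\int_0^1 f''(w)[J_t^{(2)}] \, dt$, which is precisely the limit guaranteed by that hypothesis and equals $\bfE f''(w)[Z^{(2)}]$ via the stochastic-integral representation of $Z$. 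This simultaneously establishes convergence of the series and the second equality in \eqref{eq:generator}.
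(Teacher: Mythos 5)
Your computation of the OU conditional moments and the second--order Taylor expansion is correct and mirrors the paper's proof, which arrives at the same thing by reading off the transition semigroup $(T_u f)(w)=\bfE[f(we^{-u}+\sigma(u)Z)]$. Up to that point the two arguments coincide.

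The genuine gap is in how you justify the convergence of $\sum_k f''(w)[S_k^{(2)}]$. You correctly observe that termwise bounds give no absolute convergence, but your fix narrows the hypothesis: Lemma~\ref{lem:gen} is stated for \emph{all} $f\in\sM$, whereas the smoothness condition \eqref{eq:smooth} (or its integral-form surrogate) is a feature of the strictly smaller class $\sM_0$, and the identity $\cA f(w)=-f'(w)[w]+\int_0^1 f''(w)[J_t^{(2)}]\,dt$ that you route through is the content of Lemma~\ref{lem:generator}, which is only proved for $f\in\sM'$. Your proof therefore establishes the generator formula only for the subclass of $f$ coming from $\sM_0$ test functions, not for the class $\sM$ the lemma claims. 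The paper avoids this by never decomposing $\bfE f''(w)[Z^{(2)}]$ into the Schauder series inside the Taylor argument at all: the remainder is estimated directly against $\bfE\|\sigma(u)Z - (1-e^{-u})w\|_\infty^3 = O(u^{3/2}(1+\|w\|_\infty^3))$, and the representation $\bfE f''(w)[Z^{(2)}]=\sum_k f''(w)[S_k^{(2)}]$ is recorded only at the end as an immediate consequence of \eqref{eq:repr_bm}.

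For reference, that final equality holds for every $f\in\sM$ without invoking \eqref{eq:smooth}, by a dominated convergence argument: writing $Z_{\le N}=\sum_{k\le N}Z_kS_k$ and using independence of the $Z_k$, the partial sum equals $\sum_{k\le N}f''(w)[S_k^{(2)}]=\bfE f''(w)[Z_{\le N}^{(2)}]$; since $Z_{\le N}$ is the piecewise-linear dyadic interpolation of $Z$, one has $\|Z_{\le N}\|_\infty\le\|Z\|_\infty$ and $Z_{\le N}\to Z$ uniformly a.s., so $|f''(w)[Z_{\le N}^{(2)}]|\le K_f(1+\|w\|_\infty)\|Z\|_\infty^2\in L^1$ and the limit is $\bfE f''(w)[Z^{(2)}]$. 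This gives the (conditional) convergence of the series using only the $\sM$-growth bound $\|f''(w)\|\le K_f(1+\|w\|_\infty)$, and is the missing step in your argument. The reproducing-kernel identity and \eqref{eq:smooth} are needed only for the \emph{further} rewriting into $\int_0^1 f''(w)[J_t^{(2)}]\,dt$, which is a different lemma.
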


\begin{proof}[Sketch of proof] The semigroup $(T_u)_{u \ge 0}$ of $(\hat{W}(\cdot, u))_{u \ge 0}$ acting on $\sL$ is given by the formula
\begin{align}\label{eq:semigroup}
(T_u f )(w) = \bfE[  f(  w e^{-u}  + \sigma(u) Z ) ], 
\end{align}
where $\sigma^2(u) = 1 - e^{-2u}$. For $u, v \ge 0$, we have (see \cite[equation (2.3)]{B90} or 
\cite[Lemma 5.4]{K20})
$$
\hat{W}(\cdot, u+ v) - e^{-v} \hat{W}(\cdot, u) \stackrel{ \mathscr{D} }{ = }  \sigma(v) Z( \cdot ).
$$ 
Using the bilinearity of $f^{(k)}$ along with $1 - e^{-u} = u + O(u^2)$ and \eqref{eq:taylor}, we find that
\begin{align*}
	&(T_u f )(w)  - f(w) + u f'(w)[w] - u \bfE \{  f^{''} (w)[Z^{(2)}] \} \\
	 &= R
	+O(u^2 K_f (  1 + \Vert w \Vert_\infty^2 )  + u^2 K_f (  1 + \Vert w \Vert_\infty )  + u^2 K_f (  1 + \Vert w \Vert_\infty^3 )  ),
\end{align*}
where $K_f = C \Vert f \Vert_{\sM}$,  and 
\begin{align*}
R &= (T_u f)(w) - f(w) \\
&+ \bfE\biggl\{   f'(w) [  w( 1 - e^{-u} ) ]   - \frac12 \sigma^2(u) f''(w) [Z^{(2)}] 
- (1 - e^{-u})^2 f''(w)[ w^{(2)}] \biggr\}.
\end{align*}

Next, from the representation 
\begin{align}\label{eq:repr_bm}
Z(t) = \sum_{k = 0}^\infty Z_k S_k(t),
\end{align}
where $Z_k \sim N(0,1)$ are i.i.d., 
we see that  $\bfE\{  f'(w) [ Z ] \} = 0 = \bfE\{  f''(w) [ w, Z ] \} $.
Therefore, by \eqref{eq:taylor} and the inequality $1 - e^{-u} \le u$ for $u \ge 0$, it follows that for any $u \in [0,1]$,
\begin{align*}
|R| =
 \biggl|	(T_u f)(w) &- f(w) - \bfE\biggl\{   f'(w) [  \sigma(u) Z - w( 1 - e^{-u} ) ]   \\
	&- \frac12 f''(w) [  (  \sigma(u) Z - w(1 - e^{-u}) )^{(2)}  ]
	\biggr\}  \biggr|\\
 &\le C \Vert f \Vert_{\sM} \bfE[ \Vert    \sigma(u) Z - w(1 - e^{-u})  \Vert_\infty^3  ] \\
 &\le C \Vert f \Vert_{\sM}  \bfE[  ( \sqrt{2u} |Z| + u \Vert w \Vert_\infty )^3 ] \\
 &\le C \Vert f \Vert_{\sM}  (1 + \Vert w \Vert_\infty^3 )  u^{3/2}.
\end{align*}
This yields the first equality in \eqref{eq:generator}:
$$
\cA f( w )  = \lim_{u \downarrow 0} \frac{(T_u f - f)(w)}{u} = - f'(w) [w] + \bfE  f''(w) [  Z^{(2)} ].
$$
The second equality is a straightforward consequence of \eqref{eq:repr_bm}.
\end{proof}

For any $g \in \sM$ with $\bfE[g(Z)] = 0$, the Stein 
equation is defined as 
\begin{align}\label{eq:se}
	\cA f = g, \tag{SE}
\end{align}
where $\cA$ is as in \eqref{eq:generator}.

Let 
\begin{align}\label{eq:z_n}
	Z_n(t) = n^{-1/2} \sum_{k=1}^{  \round{nt}  } Z_k,
\end{align}
where $Z_k \sim N(0,1)$ are independent random variables. Further, let 
$$
\hat{W}_n(t, u) = \sum_{k=1}^n \hat{X}_k(u)   J_{ k / n }(t) , 
$$
where we recall that $\hat{X}_k$ are defined in \eqref{eq:process_x_hat}
and $J_{k/n}$ are defined in \eqref{eq:Jt}.
Then, the stationary distribution of $( \hat{W}_n( \cdot , u)  )_{u \ge 0}$ is the distribution of $Z_n$, and 
similar to Lemma \ref{lem:gen} it can be shown (see \cite[Proposition 5.1]{K20}) that the infinitesimal generator $\cA_n$ 
of $( \hat{W}_n( \cdot , u)  )_{u \ge 0}$ acts on any $f \in \sM$ as follows:
$$
\cA_n f ( w ) = -  f'(w) [w] + \bfE  f''(w) [  Z_n^{(2)} ].
$$

Let $\sM'$ consist of all $f \in \sM$ such that 
\begin{align}\label{eq:m'_cond}
	\lim_{n \to \infty} 
		\int_0^1 f''(w)[ J_{  \round{nt} /n   }^{(2)} ] \, dt = \int_0^1 f''(w) [  J_t^{(2)} ] \, dt \quad \forall w \in D.
\end{align}
The following equation corresponds to \cite[equation (2.18)]{B90}. See also 
\cite[Theorem 2]{BRZ23} for a corresponding result in the case of general multidimensional 
Gaussian processes. We include a proof for completeness.

\begin{lem}\label{lem:generator} For $f \in \sM'$, the generator
$\cA$ in \eqref{eq:generator} satisfies
	\begin{align}\label{eq:gen-2}
		( \cA f )(w) = - f'(w)[w] + \int_0^1  f''(w) [  J_t^{(2)} ] \, dt.
	\end{align}
\end{lem}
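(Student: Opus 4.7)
By Lemma \ref{lem:gen}, $\cA f(w) = -f'(w)[w] + \bfE f''(w)[Z^{(2)}]$, so the task reduces to establishing the identity
\[
\bfE f''(w)[Z^{(2)}] = \int_0^1 f''(w)[J_t^{(2)}]\, dt.
\]
The plan is to use the discrete Gaussian random walk $Z_n$ from \eqref{eq:z_n} as a bridge, showing that $\bfE f''(w)[Z_n^{(2)}]$ converges simultaneously to both sides as $n\to\infty$.

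For the right-hand side, first write $Z_n = n^{-1/2}\sum_{k=1}^n Z_k J_{k/n}$ with i.i.d.\ $Z_k\sim N(0,1)$. Bilinearity of $f''(w)$ together with the orthogonality $\bfE[Z_kZ_{k'}]=\delta_{kk'}$ give
\[
\bfE f''(w)[Z_n^{(2)}] = \frac{1}{n}\sum_{k=1}^n f''(w)[J_{k/n}^{(2)}],
\]
which equals $\int_0^1 f''(w)[J_{\round{nt}/n}^{(2)}]\, dt$ up to the boundary term $n^{-1}(f''(w)[J_1^{(2)}] - f''(w)[J_0^{(2)}])$ of size $O(n^{-1}\|f''(w)\|)$, since $\|J_0\|_\infty = \|J_1\|_\infty = 1$. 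The hypothesis $f \in \sM'$ expressed in \eqref{eq:m'_cond} then yields
\[
\lim_{n\to\infty} \bfE f''(w)[Z_n^{(2)}] = \int_0^1 f''(w)[J_t^{(2)}]\, dt.
\]

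For the left-hand side, I would use an explicit coupling: given a fixed Brownian motion $Z$, set $Z_k := n^{1/2}(Z(k/n)-Z((k-1)/n))$, which are i.i.d.\ $N(0,1)$, so that the corresponding $Z_n$ satisfies $Z_n(t) = Z(\round{nt}/n)$. Then $\|Z_n\|_\infty \le \|Z\|_\infty$, and by uniform continuity of Brownian paths on $[0,1]$, $\|Z_n-Z\|_\infty\to 0$ almost surely. Continuity and bilinearity of $f''(w)$ give
\[
|f''(w)[Z_n^{(2)}]-f''(w)[Z^{(2)}]| \le 2\|f''(w)\|\cdot \|Z_n-Z\|_\infty\,\|Z\|_\infty,
\]
which is bounded by the integrable dominant $4\|f''(w)\|\,\|Z\|_\infty^2$. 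Dominated convergence then gives $\bfE f''(w)[Z_n^{(2)}]\to \bfE f''(w)[Z^{(2)}]$, and combining with the previous limit completes the proof.

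The main obstacle I anticipate is the last convergence of expectations: Donsker's theorem alone provides only distributional convergence of $Z_n$ to $Z$, which is inadequate for handling an arbitrary continuous bilinear form $f''(w)$. The explicit coupling above circumvents this by reducing matters to pathwise continuity of $Z$ together with a square-integrable dominant, after which dominated convergence finishes cleanly. The hypothesis $f \in \sM'$ plays a role only on the discrete side, precisely to identify the limiting Riemann integral.
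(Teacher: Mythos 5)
Your proof is correct, and the core strategy is the same as the paper's: you bridge the identity through the discrete process $Z_n$, compute $\bfE f''(w)[Z_n^{(2)}] = n^{-1}\sum_{k=1}^n f''(w)[J_{k/n}^{(2)}]$ by i.i.d.\ orthogonality, match this to $\int_0^1 f''(w)[J_{\round{nt}/n}^{(2)}]\,dt$, and finish with $f\in\sM'$. Your coupling $Z_n(t) = Z(\round{nt}/n)$ is precisely the paper's auxiliary process $G_n$.

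The one place you diverge from the paper is the final convergence $\bfE f''(w)[Z_n^{(2)}]\to\bfE f''(w)[Z^{(2)}]$. The paper runs it quantitatively: it expands $f''(w)[Z^{(2)}]-f''(w)[G_n^{(2)}]$ by bilinearity, applies Cauchy--Schwarz, and invokes the modulus-of-continuity estimate $\bfE[\sup_t|Z(t)-Z(\round{nt}/n)|^2] = O(n^{-1}\log n)$ from \cite[Lemma 3]{FN10}, yielding an explicit $O(\sqrt{\log n/n})$ rate. You instead observe $\|Z_n\|_\infty\le\|Z\|_\infty$, note $\|Z_n-Z\|_\infty\to 0$ almost surely by uniform continuity of Brownian paths, and conclude by dominated convergence with dominator $4\|f''(w)\|\,\|Z\|_\infty^2$ (which is integrable since $\sup_{[0,1]}|Z|$ has all moments). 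Your route is slightly more elementary — no external rate lemma needed — and fully sufficient here, since the lemma only asks for the identity, not a convergence rate. One small point you got right and the paper elides: $\bfE f''(w)[Z_n^{(2)}]$ uses indices $k=1,\dots,n$ while the Riemann integral corresponds to $\ell=0,\dots,n-1$, so there genuinely is a boundary discrepancy $n^{-1}(f''(w)[J_1^{(2)}]-f''(w)[J_0^{(2)}]) = O(n^{-1}\|f''(w)\|)$, which vanishes in the limit; the paper silently identifies the two quantities at the step \say{Using \eqref{eq:z_n}, we now obtain\ldots}.
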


\begin{proof} Recalling \eqref{eq:repr_bm}, 
	it suffices to verify that 
	\begin{align*}
		\int_0^1  f''(w) [  J_t^{(2)} ] \, dt = \bfE f''(w)[Z^{(2)}] \quad \forall w \in D.
	\end{align*}
	Define 
	$$
	G_n(t) = Z(  \ell / n  ) \quad \forall t \in [ \ell / n, ( \ell + 1) / n  ],
	$$
	where $Z$ is a standard Brownian motion. Observe that $G_n$
	has the same disribution as $Z_n$ in \eqref{eq:z_n}. By the bilinearity of $f''(w)$ and Proposition \ref{prop:M_norm},
	\begin{align*}
		&| \bfE f''(w) [Z^{(2)} ] -   \bfE f''(w) [G^{(2)}_n ]  | \\
		&= | \bfE[   f''(w) [ Z - G_n , Z  ]   + f''(w) [G_n, Z - G_n] ]   | \\
		&\le C  \Vert f \Vert_{\sM} (1 + \Vert w \Vert_\infty) \bfE \biggl[ 
		\sup_{ t \in [0,1] } |  Z(t) - Z(  \round{nt}/n  )    |   \sup_{t \in [0,1]} |Z(t)|
		\biggr] \\
		&\le C  \Vert f \Vert_{\sM} (1 + \Vert w \Vert_\infty) \biggr\{ \bfE \biggl[ 
		\sup_{ t \in [0,1] } |  Z(t) - Z(  \round{nt}/n  )    |^2
		\biggr] \biggl\}^{1/2} \\
		&\le  C  \Vert f \Vert_{\sM} (1 + \Vert w \Vert_\infty)  \log(n)^{1/2} n^{-1/2} \to 0, \quad n \to \infty,
	\end{align*}
	where the last inequality follows from \cite[Lemma 3]{FN10}. Using \eqref{eq:z_n}, we now obtain
	$$
	\bfE f''(w) [Z^{(2)} ] = \lim_{n \to \infty}   \bfE f''(w) [G^{(2)}_n ] = \lim_{n \to \infty} \int_0^1 f''(w) [  J_{ \round{nt} /n  }^{(2)} ] \, dt.
	$$
	Since $f \in \sM'$,
	$$
	\lim_{n \to \infty} \int_0^1 f''(w) [  J_{ \round{nt} /n  }^{(2)} ] \, dt = \int_0^1 f''(w)[J_t^{(2)}] \, dt.
	$$
\end{proof}

\subsection{On the existence and properties of the solution to \eqref{eq:se}}
In \cite[Lemma 3.1]{KDV17}, an explicit counterexample is given to show that 
the semigroup $(T_u)_{u \ge 0}$ in \eqref{eq:semigroup}  is not strongly continuous on $(\sL, \Vert \cdot \Vert_\sL)$. Nevertheless, it is established that $\phi(g) := - \int_0^\infty T_u g \, du$ solves \eqref{eq:se} for any $g \in \sM$:

\begin{lem}[Lemma 4.1 and Proposition 4.4 in \cite{KDV17}]\label{lem:stein_sol}\phantom{text}
\begin{itemize}
	\item[(1)] For any $g \in \sM$ such that $\bfE[g(Z)] = 0$, $f = \phi(g) = - \int_0^\infty T_u g \, du$ exists 
	and satisfies $\phi(g) \in \sM$. In particular, $\phi(g)$ is in the domain of $\cA$. Moreover, for $k = 1,2$,
	\begin{align}\label{eq:Dk_fml}
		 ( \phi(g) )^{(k)} (w) = - \bfE \int_0^\infty e^{-ku}  g^{(k)}(  w e^{-u}  + \sigma(u) Z  ) \, du.
	\end{align}
	\item[(2)] For any $g \in \sM$ with $\bfE[g(Z)] = 0$, $f = \phi(g)$ solves \eqref{eq:se}: $\cA f = g$.
\end{itemize}

\end{lem}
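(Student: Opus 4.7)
The plan is to prove part (1) first by showing $|T_u g(w)|$ decays fast enough in $u$ to make $\phi(g)=-\int_0^\infty T_u g\,du$ a well-defined element of $\sM$, and then to deduce part (2) from the semigroup identity $T_t\phi(g)=-\int_t^\infty T_v g\,dv$ evaluated in the limit $t\downarrow 0$.

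For the pointwise decay estimate on $T_u g(w)$, I would exploit the centering $\bfE[g(Z)]=0$ through the decomposition $Z\stackrel{\mathscr{D}}{=}e^{-u}Z_2+\sigma(u)Z_1$ with independent standard Brownian motions $Z_1,Z_2$, which rewrites
\begin{align*}
T_u g(w)=\bfE\bigl[g(we^{-u}+\sigma(u)Z_1)-g(e^{-u}Z_2+\sigma(u)Z_1)\bigr].
\end{align*}
Taylor-expanding both arguments around $\sigma(u)Z_1$ to third order and taking expectations gives four contributions: the linear term in $Z_2$ vanishes by $\bfE[Z_2]=0$; the surviving linear piece $e^{-u}\bfE[g'(\sigma(u)Z_1)][w]$ is $O(e^{-u}\Vert w\Vert_\infty)$ by the growth bound \eqref{eq:taylor}; the quadratic piece reduces by bilinearity of $g''$ to a difference that is $O(e^{-2u}(1+\Vert w\Vert_\infty^2))$; and the cubic remainder is $O(e^{-3u}(1+\Vert w\Vert_\infty^3))$ by the Lipschitz property of $g''$. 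Altogether $|T_u g(w)|\le C\Vert g\Vert_\sM e^{-u}(1+\Vert w\Vert_\infty^3)$, which integrates to give $\Vert\phi(g)\Vert_\sL\le C\Vert g\Vert_\sM$. Dominated convergence, justified by \eqref{eq:taylor}, allows differentiation under the integral to yield \eqref{eq:Dk_fml}, and the factor $\int_0^\infty e^{-ku}\,du$ then produces $\Vert(\phi(g))'(w)\Vert\le C\Vert g\Vert_\sM(1+\Vert w\Vert_\infty^2)$ and $\Vert(\phi(g))''(w)\Vert^2\le C\Vert g\Vert_\sM(1+\Vert w\Vert_\infty)$. The Lipschitz property of $(\phi(g))''$ follows by applying the Lipschitz property of $g''$ inside the integral representation with $v=he^{-u}$: the chain-rule factor $e^{-u}$ combines with the weight $e^{-2u}$ to give a Lipschitz constant bounded by $\tfrac13\Vert g\Vert_\sM$. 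This establishes $\phi(g)\in\sM$.

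For part (2), the semigroup property $T_tT_u g=T_{t+u}g$ follows directly from the representation $(T_u f)(w)=\bfE[f(we^{-u}+\sigma(u)Z)]$ via independence of Brownian increments, so
\begin{align*}
T_t\phi(g)(w)-\phi(g)(w)=-\int_0^\infty\bigl(T_{t+u}g(w)-T_u g(w)\bigr)\,du=\int_0^t T_u g(w)\,du.
\end{align*}
Continuity of $u\mapsto T_u g(w)$ at $u=0$, again by dominated convergence using the bound above, yields $\lim_{t\downarrow 0}t^{-1}(T_t\phi(g)(w)-\phi(g)(w))=g(w)$. Since $\phi(g)\in\sM$ by part (1), Lemma \ref{lem:gen} identifies this limit with $\cA\phi(g)(w)$, placing $\phi(g)$ in the domain of $\cA$ and completing the proof. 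The main obstacle I expect lies in the first step: the naive bound $|T_u g(w)|\le C\Vert g\Vert_\sM(1+\Vert w\Vert_\infty^3)$ from membership in $\sL$ does not decay in $u$ at all, and integrability is recovered only by combining $\bfE[g(Z)]=0$ with the full Taylor structure available from $g\in\sM$ to extract a factor of $e^{-u}$ — all remaining steps are then routine bookkeeping with dominated convergence and the seminorm bounds in Proposition \ref{prop:M_norm}.
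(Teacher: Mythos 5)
The paper provides no proof of this lemma; it is quoted directly from \cite{KDV17} (Lemma 4.1 and Proposition 4.4 there), with the preceding paragraph noting only that $(T_u)_{u\ge 0}$ fails to be strongly continuous on $(\sL,\Vert\cdot\Vert_\sL)$, so the standard Markov-generator machinery cannot be invoked. Your proposal supplies a direct proof that the paper chose to omit, and the key move is correct: the naive $\sL$-bound on $T_ug$ has no decay, and you recover the factor $e^{-u}$ exactly as one should, by writing
\begin{align*}
T_u g(w)=\bfE\bigl[g(we^{-u}+\sigma(u)Z_1)\bigr]-\bfE\bigl[g(e^{-u}Z_2+\sigma(u)Z_1)\bigr]
\end{align*}
using $\bfE[g(Z)]=0$ together with $Z\stackrel{\mathscr{D}}{=}e^{-u}Z_2+\sigma(u)Z_1$, then Taylor expanding about $\sigma(u)Z_1$ so that the common zeroth-order terms cancel and the mean-zero linear term in $Z_2$ drops out. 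Integrating the resulting $O(e^{-u}(1+\Vert w\Vert_\infty^3))$ bound gives $\phi(g)\in\sL$; differentiating under the integral (justified by the polynomial growth of $g',g''$ from Proposition~\ref{prop:M_norm} and dominated convergence) yields \eqref{eq:Dk_fml} and then the $\sM$-seminorm bounds and the Lipschitz bound $\tfrac13\Vert g\Vert_\sM$ on $\phi(g)''$. Your part~(2) is likewise sound: the semigroup identity $T_t T_u=T_{t+u}$ follows directly from $\eqref{eq:semigroup}$ and the variance relation $e^{-2u}\sigma^2(t)+\sigma^2(u)=\sigma^2(t+u)$, which gives $T_t\phi(g)-\phi(g)=\int_0^t T_u g\,du$; pointwise right-continuity of $u\mapsto T_u g(w)$ at $0$ then yields the pointwise limit $g(w)$, and since you have already shown $\phi(g)\in\sM$, Lemma~\ref{lem:gen} identifies that limit with $\cA\phi(g)(w)$.

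Two points worth making explicit if you wrote this up fully. First, the use of $T_t\phi(g)=-\int_t^\infty T_vg\,dv$ involves interchanging $\bfE_{Z'}$ with $\int_0^\infty du$; this needs a one-line Fubini justification using your decay bound on $T_ug$. Second, be slightly more careful not to conflate norms: the bound $\Vert g''(w)\Vert\le K_g(1+\Vert w\Vert_\infty)$ in \eqref{eq:taylor} is weaker than what $\Vert g\Vert_\sM<\infty$ actually gives (namely $\Vert g''(w)\Vert\le\sqrt{\Vert g\Vert_\sM}\,(1+\Vert w\Vert_\infty)^{1/2}$), and you need the sharper form to close the third seminorm bound $\Vert\phi(g)''(w)\Vert^2\le C\Vert g\Vert_\sM(1+\Vert w\Vert_\infty)$, exactly as you asserted. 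Neither point is a gap in the idea; both are bookkeeping. The approach itself is correct and is, to my knowledge, the same centering-and-Taylor strategy used in \cite{KDV17}.
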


From \eqref{eq:Dk_fml} it follows that 
\begin{align}\label{eq:phi_bound}
\Vert \phi( g ) \Vert_{\sM} \le C \Vert g  \Vert_{\sM}.
\end{align}

The following result, which is taken from \cite{B90}, is a straightforward consequence of \eqref{eq:Dk_fml}. We include a proof for completeness, as it was omitted from \cite{B90}.

\begin{lem}\label{lem:smooth} Recall from \eqref{eq:smooth} that $\sM_0$ is 
defined as the class of all $g \in \sM$ that satisfy the smoothness condition 
	\begin{align*}
		\sup_{w \in D }  | g''(w)[J_r, J_s - J_t] |  \le C_0 \Vert g \Vert_\sM |t-s|^{1/2} \quad \forall r,s,t \in [0,1].
	\end{align*}
	If $g \in \sM_0$, then $\phi(g) \in \sM'$.
\end{lem}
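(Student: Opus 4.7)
\noindent\textbf{Proof plan for Lemma \ref{lem:smooth}.}
The plan is to first show that $f = \phi(g)$ inherits the smoothness condition \eqref{eq:smooth} from $g$, and then deduce membership in $\sM'$ by using this condition to estimate the integrand uniformly in $t$.

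First I would apply the formula \eqref{eq:Dk_fml} with $k=2$ to write
\begin{equation*}
f''(w)[J_r, J_s - J_t] = - \bfE \int_0^\infty e^{-2u} g''( w e^{-u} + \sigma(u) Z )[J_r, J_s - J_t] \, du.
\end{equation*}
Since $g \in \sM_0(C_0)$, the integrand is bounded in absolute value by $C_0 \Vert g \Vert_\sM |t-s|^{1/2}$ uniformly in $w$, $u$ and the sample path of $Z$. Therefore
\begin{equation*}
\sup_{w \in D} | f''(w)[J_r, J_s - J_t] | \le \tfrac{1}{2} C_0 \Vert g \Vert_\sM |t-s|^{1/2} \le \tfrac{1}{2} C_0 \Vert \phi(g) \Vert_\sM^{-1} \Vert \phi(g) \Vert_\sM \Vert g \Vert_\sM |t-s|^{1/2},
\end{equation*}
so in view of \eqref{eq:phi_bound} there exists a constant $C_0'$ such that $f \in \sM_0(C_0')$ as well.

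Next, to verify \eqref{eq:m'_cond} for $f$, I would fix $w \in D$ and write, using bilinearity and the symmetry of the second Fr\'{e}chet derivative,
\begin{equation*}
f''(w)[ J_{ \round{nt}/n }^{(2)} ] - f''(w)[ J_t^{(2)} ]
= f''(w)[ J_{ \round{nt}/n } - J_t, J_{ \round{nt}/n } ] + f''(w)[ J_t, J_{ \round{nt}/n } - J_t ].
\end{equation*}
Applying the smoothness bound established in the previous step to each term (with $r = \round{nt}/n$ in the first and $r = t$ in the second, and noting $|\round{nt}/n - t| \le 1/n$), I obtain
\begin{equation*}
\bigl| f''(w)[ J_{ \round{nt}/n }^{(2)} ] - f''(w)[ J_t^{(2)} ] \bigr|
\le 2 C_0' \Vert \phi(g) \Vert_\sM \, n^{-1/2}
\end{equation*}
uniformly in $t \in [0,1]$. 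Integrating this estimate over $[0,1]$ yields the desired convergence as $n \to \infty$, so $f \in \sM'$.

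There is no real obstacle here: once the key observation is made that $\phi(g)$ inherits the smoothness condition \eqref{eq:smooth} via \eqref{eq:Dk_fml} (with the constant independent of $w$ thanks to the $w$-uniform bound in \eqref{eq:smooth}), the conclusion follows by a telescoping bilinearity expansion and the elementary estimate $|\round{nt}/n - t|^{1/2} \le n^{-1/2}$, which even gives uniform convergence in $t$ rather than merely pointwise or dominated convergence.
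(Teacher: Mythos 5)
Your argument is correct and follows essentially the same route as the paper: apply the formula \eqref{eq:Dk_fml} for $k=2$ to transfer \eqref{eq:smooth} from $g$ to $\phi(g)$, then use bilinearity and symmetry of the second derivative to telescope $J_{\round{nt}/n}^{(2)} - J_t^{(2)}$ into two terms each controlled by $|t - \round{nt}/n|^{1/2} \le n^{-1/2}$. The paper merely fuses your two steps into a single computation, applying \eqref{eq:smooth} directly to $g$ inside the double integral rather than first packaging the result as ``$\phi(g) \in \sM_0(C_0')$''; your intermediate appeal to $\Vert \phi(g) \Vert_\sM^{-1}$ is mathematically harmless here (since $\cA\phi(g)=g$ prevents $\phi(g)$ from vanishing when $g\neq 0$) but is also unnecessary, as the uniform $n^{-1/2}$ bound already suffices without recasting the constant in terms of $\Vert \phi(g) \Vert_\sM$.
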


\begin{proof} Let $f = \phi(g)$. By \eqref{eq:Dk_fml},
	\begin{align*}
		&\int_0^1 f''(w) [  J_t^{(2)} ] -  f''(w)[ J_{  \round{nt} /n   }^{(2)} ] \, dt  \\
		&=-  \int_0^1   \bfE \int_0^\infty e^{-2u}  \biggl\{  
		g''(  w e^{-u}  + \sigma(u) Z  )[  J_t^{(2)} ]  - g''(  w e^{-u}  + \sigma(u) Z  ) [J_{  \round{nt} /n   }^{(2)} ]   \biggr\}  \, du  
		\, dt 
	\end{align*}
	Since $f''(w)$ is symmetric, we can apply \eqref{eq:smooth} twice to obtain
	\begin{align*}
		&\biggl| 
		\int_0^1 f''(w) [  J_t^{(2)} ] -  f''(w)[ J_{  \round{nt} /n   }^{(2)} ] \, dt 
		\biggr| \\
		&\le \int_0^1 \int_0^\infty e^{-2u}  \cdot 2 C_0 \Vert g \Vert_{\sM} | t - \round{nt}/n  |^{1/2} \, du \, dt \\
		&\le  C C_0  \Vert g \Vert_\sM n^{-1/2}.
	\end{align*}
\end{proof}

\subsection{Conclusion of abstract Barbour's method}
Suppose that $g \in \sM_0$. Then, $h := g - \bfE[ g(Z)] \in \sM_0$ satisfies 
$\bfE[ h (Z)] = 0$. By Lemmas \ref{lem:stein_sol} and \ref{lem:smooth},
for any $D$-valued random variable $X$ on $(M,\mathcal F,\mu)$,
$$
\mu[ g(X) ] - \bfE[  g(Z) ] = \mu[ h(X)] = \mu[  \cA f (X)  ],
$$
where $f = \phi(h) \in \sM'$. Therefore, by Lemma \ref{lem:generator},
\begin{align}\label{eq:1216b}
	\mu[ g(X) ] - \bfE[  g(Z) ] = \mu \biggl\{  
	- f'(X)[X] + \int_0^1  f''(X) [  J_t^{(2)} ] \, dt
	\biggr\}.
\end{align}
Notice that, in this step, we do not use any concrete property for $X$.

\section{Proof of Theorem \ref{thm:main}}\label{sec:main_proof}

Recall the definition of $W_N$ from \eqref{eq:def_W}.
In this section, we write $W = W_N$, omitting the subscript for brevity.
To prove Theorem \ref{thm:main},
we will estimate the right-hand side of \eqref{eq:1216b} with $X=W$.
We begin by formulating a preliminary version of Theorem \ref{thm:main} that 
reveals the exact class of test functions $F$ for which the bound \eqref{eq:fcb} is required.

\subsection{A preliminary result} 

Recall the definition of $\theta_n(t)$ from \eqref{eq:def_theta}.
For $0 \le n,m < N$, define 
the following auxiliary random elements:
\begin{align}\label{eq:Snm}
	S_{n,m}(t) &= \sum_{ \substack{ 0 \le i < N \\  |i - n| > m  } }   \theta_i(t)  X_i \quad 
	\text{and} \quad T_{n,m}(t) = \sum_{ \substack{ 0 \le i < N \\  |i - n| = m  } }   \theta_i(t)  X_i.
\end{align}
Let $Z$ be a standard Brownian motion independent from $(X_n)$. Given $g \in \sM$, for 
any $s,t,u \in [0,1]$ and $0 \le i,n < N$ define 
\begin{align}
	\begin{split}
		\label{eq:Hg}
		&H_g(w_1, w_2; s,t,u, i, n) \\
		&= \bfE \biggl\{  g''(  s  B^{-1/2} w_1 + t B^{-1/2} w_2 + u Z )[\theta_i, \theta_n] 
		-  g''(  s B^{-1/2} w_1 + u Z )[\theta_i, \theta_n] \biggr\},
	\end{split}
\end{align}
where $\bfE$ denotes expectation with respect to $Z$.

For a real-valued random variable $X$ defined on $(M, \cF, \mu)$, let  
$\overline{X} = X - \mu(X)$.

\begin{thm}\label{thm:prelim} Let $N \ge 1$ and $g \in \sM_0(C_0)$. Suppose there exist a function 
	$\rho : \bZ_+ \to \bR_+$ and constants $C_i$, $1 \le i \le 3$, such that the following conditions 
	hold for all $0 \le n,m < N$ and all $s,t,u \in [0,1]$:
	\begin{itemize}
		\item[(A1)] $|\mu(X_n X_m)| \le C_1 \rho(|n - m|)$. \smallskip 
		\item[(A2)] Whenever $|i -n | = m$, 
		$0 \le i < N$, and $m \le k < N$, 
		$$\biggl|\mu \biggl\{  X_n X_i   H_g(  S_{n,k}, T_{n,k} ; s,t,u,i, n)  \biggr\} \biggr| \le 
		C_2 B^{-1/2} \Vert g \Vert_{\sM } \rho(m).$$ \smallskip 
		\item[(A3)] Whenever $|i -n | = m$, 
		$0 \le i < N$, and $2m + 1 \le k < N$
		$$
		\biggl|   \mu \biggl\{  X_n X_i   \overline{H_g(  S_{n,k}, T_{n,k} ; s,t,u,i, n)}  \biggr\}   \biggr| \le 
		C_3 B^{-1/2} \Vert g \Vert_{\sM } \rho(k-m).
		$$
	\end{itemize}
	Then, 
	$$
	| \mu [ g(W_N) ] - \bfE[g(Z)] |  \le C C_N  \Vert g \Vert_{\sM}  B^{-3/2} N,
	$$
	where $C > 0$ is an absolute constant,
	$$
	C_{N} = L^3 + (C_1 L  + C_2 + C_3) \sum_{m=1}^{N-1} m \rho(m) + C_0 C_1^{3/2} C_\rho^{1/2}\biggl[ C_\rho
	+   \sum_{j=1}^{N-1} j^{1/2} \rho(j) \biggr],
	$$
	and $C_\rho = \sum_{j=0}^{N-1} \rho(j)$.
\end{thm}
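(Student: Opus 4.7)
The plan is to start from Stein's identity and reduce the resulting integrals to the four hypotheses via a careful Taylor--semigroup--telescoping argument. Setting $f := \phi(g - \bfE[g(Z)])$, Lemma \ref{lem:stein_sol} gives $\|f\|_\sM \le C \|g\|_\sM$, Lemma \ref{lem:smooth} ensures $f \in \sM'$ (using $g \in \sM_0(C_0)$), and Lemma \ref{lem:generator} yields
\[
\mu[g(W_N)] - \bfE[g(Z)] = \mu\!\left[-f'(W)[W] + \int_0^1 f''(W)[J_t^{(2)}]\, dt\right].
\]
For each $0 \le n < N$ and a window radius $K$, I write $W = W_{(n,K)} + B^{-1/2}\Delta_n$, with $W_{(n,K)} := B^{-1/2} S_{n,K}$ and $\Delta_n := \sum_{m=0}^{K} T_{n,m}$. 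A second-order Taylor expansion of $f'(W)[\theta_n]$ about $W_{(n,K)}$, justified by the Lipschitz continuity of $f''$ in \eqref{eq:d2-lip}, produces
\[
f'(W)[\theta_n] = f'(W_{(n,K)})[\theta_n] + B^{-1/2} f''(W_{(n,K)})[\Delta_n,\theta_n] + R_n,
\]
with $|R_n| \le \tfrac12 \|f\|_\sM B^{-1}\|\Delta_n\|_\infty^2$. Multiplying by $-B^{-1/2} X_n$ and summing over $n$ decomposes the target into three pieces: a zeroth-order $E_{\mathrm{I}}$, a first-order $E_{\mathrm{II}}$, and a Taylor remainder $E_{\mathrm{III}}$.

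The engine converting $f''$-differences into $H_g$ is the integral representation $f''(w)[\theta_i,\theta_n] = -\int_0^\infty e^{-2u}\,\bfE\,g''(we^{-u}+\sigma(u)Z)[\theta_i,\theta_n]\,du$ from Lemma \ref{lem:stein_sol}: subtracting the analogous formula at $w + B^{-1/2}v$ yields $-\int_0^\infty e^{-2u} H_g(B^{1/2}w, v; e^{-u}, e^{-u}, \sigma(u), i, n)\,du$. Applied to the telescoping
\[
f''(W) - f''(W_{(n,K)}) = \sum_{k=1}^{K}\{f''(W_{(n,k-1)}) - f''(W_{(n,k)})\}
\]
with $W_{(n,k-1)} = W_{(n,k)} + B^{-1/2} T_{n,k}$, each layer is an $H_g(S_{n,k}, T_{n,k};\cdot)$-integral controlled by (A2) (cost $\rho(m)$ for gap $m = |i-n|$) when $k \le 2m$, and by (A3) (cost $\rho(k-m)$) when $k \ge 2m+1$. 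Summing per-pair over $k$ yields cost $\lesssim m\rho(m) + \sum_{j > m}\rho(j)$; summing over pairs $(n,i)$ with $|i-n|=m$ and over $m$, and using $\sum_m \sum_{j > m}\rho(j) = \sum_j j\rho(j)$, produces the $(C_2 + C_3)\sum_m m\rho(m)$ contribution to $C_N$.

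After the telescoping replaces $f''(W_{(n,K)})$ by $f''(W)$ in $E_{\mathrm{II}}$, the leading term becomes $-B^{-1}\sum_{n,i:|i-n|\le K}\sigma_{n,i}\,\mu[f''(W)[\theta_i,\theta_n]]$. To reconcile this with $\int_0^1 \mu[f''(W)[J_t^{(2)}]]\,dt$, I split the integral as $\sum_n \int_{B_n/B}^{B_{n+1}/B}$; on each subinterval, writing $J_t = \theta_n - (J_{B_n/B} - J_t)$ and applying \eqref{eq:smooth} gives $|f''(W)[J_t, J_t] - f''(W)[\theta_n,\theta_n]| \lesssim C_0 \|g\|_\sM (t - B_n/B)^{1/2}$, producing a smoothness error $\lesssim C_0 \|g\|_\sM \sum_n (\beta_n/B)^{3/2}$ that is controlled by H\"older combined with (A1) to give the $C_0 C_1^{3/2} C_\rho^{1/2}(C_\rho + \sum_j j^{1/2}\rho(j))$ term in $C_N$. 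The zeroth-order $E_{\mathrm{I}}$ is handled by the expansion $f'(w)[\theta_n] = f'(0)[\theta_n] + \int_0^1 f''(sw)[w,\theta_n]\,ds$ combined with $\mu(X_n)=0$, reducing matters to $\mu(X_n X_i f''(\cdots)[\theta_i,\theta_n])$ for $|i-n|>K$, bounded via (A1) together with the $\|f''\|$-estimate from \eqref{eq:taylor}.

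The remainder $E_{\mathrm{III}}$ together with the balancing of $K$ is the principal obstacle. The crude estimate $\mu(\|\Delta_n\|_\infty^2) \le L^2(2K+1)^2$ is too large for $K$ of order $N$, so one must express $\mu(X_n R_n)$ via a \emph{second} application of the semigroup representation, writing the Lipschitz-error $f''(w + sB^{-1/2}\Delta_n) - f''(w)$ as an $H_g$-integral against the $T_{n,m}$-layers, so that (A2) and (A3) can be invoked again to yield a $C_1 L \sum_m m\rho(m)$ contribution, while the pure edge (diagonal $i=n$ at $m=0$) absorbs into the $L^3$ term. The delicate bookkeeping consists of arranging each cross-term produced by Taylor, telescoping, and the $J_t$-to-$\theta_n$ conversion so that it lands in precisely the form required by one of (A1), (A2), (A3), or \eqref{eq:smooth}; once every piece is counted only once and the combinatorial identity $\sum_m \sum_{j>m}\rho(j) = \sum_j j\rho(j)$ is applied uniformly across $E_{\mathrm{I}}$, $E_{\mathrm{II}}$, and $E_{\mathrm{III}}$, assembling the contributions yields the bound with $C_N$ as stated.
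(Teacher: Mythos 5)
Your high-level plan has the right ingredients (Stein--semigroup setup via Lemmas \ref{lem:stein_sol}, \ref{lem:smooth}, \ref{lem:generator}; the $J_t\to\theta_n$ conversion using \eqref{eq:smooth} and (A1), which correctly mirrors Proposition \ref{prop:estim_ii}; the combinatorial identity $\sum_m\sum_{j>m}\rho(j)=\sum_j j\rho(j)$), but the central decomposition is structurally different from the paper's, and the difference is precisely where the proof breaks.

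The paper never performs a \emph{second-order} Taylor expansion of $f'$ about $W_{(n,K)}$ with a global block increment $\Delta_n=\sum_{m=0}^K T_{n,m}$. Instead, in Proposition \ref{prop:decomp} it peels $W$ off \emph{shell by shell}: $\mu(f'(W)[W])=\sum_{n,m}\mu\{\tilde X_n(f'(W_{n,m-1})-f'(W_{n,m}))[\theta_n]\}$, and applies the exact first-order identity $f'(w+h)-f'(w)=\int_0^1 f''(w+sh)[h,\cdot]\,ds$ with $h=Y_{n,m}$ (a single shell). This has the crucial structural property that, for the Taylor correction term $\xi_{i,n,m}(s)=f''(W_{n,m}+sY_{n,m})[\theta_i,\theta_n]-f''(W_{n,m})[\theta_i,\theta_n]$, the pair $(n,i)$ always satisfies $|i-n|=m$ exactly, so after the semigroup representation it is of the precise form $\mu(X_nX_iH_g(S_{n,m},T_{n,m};\cdots))$ required by (A2) with $k=m\ge m$. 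The subsequent telescoping of the recentered $f''(W_{n,m})$ to $0$ runs only over $k>m$ (terms $E_3,E_4,E_5$), and the uncentered residuals at $k\le m$ are the \emph{decoupled} products $\mu(\tilde X_n\tilde X_i)\,\mu(\xi_{i,n,k})$ collected in $E_6,E_7$, which need nothing beyond (A1) and the crude norm bound on $\xi$. Nowhere does a correlated term $\mu(X_nX_i\,H_g(S_{n,k},T_{n,k};\cdots))$ with $k<|i-n|$ occur.

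Your decomposition does not have this property, in two places. First, telescoping $f''(W)-f''(W_{(n,K)})=\sum_{k=1}^{K}\{f''(W_{(n,k-1)})-f''(W_{(n,k)})\}$ inside $E_{\mathrm{II}}$ is applied against \emph{every} pair $(n,i)$ with $0\le|i-n|\le K$; for a pair at distance $m$, the layers $k<m$ give $\mu(X_nX_iH_g(S_{n,k},T_{n,k};\cdots))$ which is a genuine correlation not covered by (A2) (which requires $k\ge m$) nor (A3) (which requires $k\ge 2m+1$), and (A1) alone cannot split the expectation. Second, and more seriously, the Taylor remainder $R_n$ has $f''(W_{(n,K)}+sB^{-1/2}\Delta_n)-f''(W_{(n,K)})$ scaled uniformly by $s$ \emph{across the whole block} $\Delta_n$; any shell-wise telescoping of this difference produces intermediate base arguments of the form $W_{(n,K)}+sB^{-1/2}\sum_{j<\ell}T_{n,j}$ (or $\,\sum_{j>\ell}T_{n,j}$), which for $K<N-1$ are \emph{not} equal to $B^{-1/2}S_{n,\ell}$ for any $\ell$, so the resulting semigroup expression is not $H_g(S_{n,\ell},T_{n,\ell};\cdots)$ and (A2)/(A3) do not apply; the phrase ``a second application of the semigroup representation'' gestures at a fix but does not produce terms of the required shape. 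Even setting $K=N-1$ (which kills $E_{\mathrm I}$) does not repair the $k<m$ problem in both $E_{\mathrm{II}}$ and $E_{\mathrm{III}}$. Your remark that the strategy requires ``balancing $K$'' is a symptom of the mismatch: the paper's decomposition contains no free window parameter and needs no optimization, precisely because the first-order per-shell Taylor aligns the shell index with the pair distance and so never generates an uncontrolled intermediate term. The handling of $E_{\mathrm I}$ for $K<N-1$ is also incorrect as written: $\mu(X_nX_if''(sW_{(n,K)})[\theta_i,\theta_n])$ with $|i-n|>K$ has $X_i$ \emph{inside} the argument of $f''$, so it cannot be bounded by (A1) times a norm of $f''$.
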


The proof of Theorem \ref{thm:prelim} is provided in the following section.
In the proof of Proposition \ref{prop:0118}, we verify that the conditions in 
Theorem \ref{thm:prelim} can be described as instances of \eqref{eq:fcb2} with at most three blocks of $X_n$'s
 (namely, \eqref{eq:fcb2} with $K = 2$ and $3$ suffices for (A1)-(A3)). 
Using Proposition \ref{prop:fcb1-k}, we then conclude that (A1)-(A3) follow from \eqref{eq:fcb}. Compare also (A1)-(A3) with the corresponding conditions (B1)-(B3) in \cite[Theorem 2.3]{LS20} for normal approximations for chaotic systems. 

\subsection{Proof of Theorem \ref{thm:prelim}}\label{sec:proof_of_prelim_thm}

\subsubsection{Decomposition of $\mu[  \cA f (W)  ]$}\label{s:52}
By \eqref{eq:1216b}, for each $g \in \sM_0$, 
$$
\mu[ g(W) ] - \bfE[  g(Z) ] = \mu[  \cA f (W)  ]= \mu \biggl\{  
- f'(W)[W] + \int_0^1  f''(W) [  J_t^{(2)} ] \, dt
\biggr\},
$$
where $f = \phi(h) \in \sM'$ with $h= g - \bfE[ g(Z)] $. Recall that 
$\sigma_{n,m} = \mu(X_n X_m)$. We then have the decomposition 
\begin{align}\label{eq:decomp}
\mu[  g(W) ]  - \bfE[ g(Z) ] = I + II
\end{align}
where 
\begin{align*}
	I = \mu \biggl\{  - f'( W )[W] + B^{-1} \sum_{n=0}^{N-1}  \sum_{m=0}^{N-1}    \sigma_{n,m}   f''(W) [ \theta_n, \theta_m ]  \biggr\}, 
\end{align*}
and 
\begin{align*}
	II = \mu \biggl\{  -
	B^{-1} \sum_{n=0}^{N-1}  \sum_{m=0}^{N-1}    \sigma_{n,m}   f''(W) [ \theta_n, \theta_m ] + 
	\int_0^1  f''(W) [  J_t^{(2)} ] \, dt
	\biggr\}.
\end{align*}
Below, we estimate $I$ and $II$ to establish the bound in Theorem \ref{thm:prelim}.
Term $I$ corresponds to estimating how close the distribution of $W$ is to that of 
$Y = B^{-1/2} \sum_{n=0}^{  N - 1  } \tilde{Z}_n \theta_n$, where 
$\tilde{Z}_n$ are Gaussian random variables, independent of $(X_n)$, having the same covariance 
structure as $(X_n)$:
$$
\bfE(   \tilde{Z}_i \tilde{Z}_j ) = \mu(X_i X_j) \quad \forall i, j \ge 0.
$$
This is made precise by \cite[Proposition 5.1]{K20}.

\subsubsection{Estimate on $II$} In the rest of the proof we write $A \lesssim B$ if there exists an absolute constant 
$C > 0$ such that $A \le C B$. Below we estimate $II$ using (A1). Note that (A2) and (A3) are not required for this estimate.

\begin{prop}\label{prop:estim_ii}
	Assuming (A1),
	\begin{align}\label{eq:estim_II-2}
		|II| \lesssim \Vert g \Vert_{\sM} B^{-3/2} N C_0 C_1^{3/2}C_\rho^{1/2}\biggl[ C_\rho
		+   \sum_{j=1}^{N-1} j^{1/2} \rho(j) \biggr],
	\end{align}
	where $C_\rho = \sum_{k = 0}^{N-1}  \rho(k)$.
\end{prop}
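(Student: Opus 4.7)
I would split $II = \mu[E_2(W) - E_1(W)]$, where
\[
E_1(W) := B^{-1}\sum_{n,m=0}^{N-1}\sigma_{n,m}\, f''(W)[\theta_n,\,\theta_m-\theta_n],
\]
obtained by using $\sum_m \sigma_{n,m}=\beta_n$ and bilinearity of $f''(W)$, and
\[
E_2(W) := \int_0^1 f''(W)[J_t^{(2)}]\,dt - B^{-1}\sum_{n=0}^{N-1}\beta_n\, f''(W)[\theta_n^{(2)}],
\]
which is a quadrature error on the partition $t_n = B_n/B$. The term $E_1$ will be controlled by applying \eqref{eq:smooth} to the increments $\theta_m-\theta_n = J_{B_m/B}-J_{B_n/B}$, while $E_2$ is handled via the $\tfrac12$-H\"older continuity of $t\mapsto f''(W)[J_t^{(2)}]$ also implied by \eqref{eq:smooth}. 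The only ingredients beyond these are (A1) and the bound $\|f\|_{\sM}\lesssim \|g\|_{\sM}$ from Lemma~\ref{lem:stein_sol}.

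For $E_1$, \eqref{eq:smooth} gives $|f''(W)[\theta_n,\theta_m-\theta_n]| \le C_0\|f\|_{\sM}B^{-1/2}|B_n-B_m|^{1/2}$. Since (A1) implies $|\beta_i|\le \sum_j|\sigma_{i,j}|\le 2C_1 C_\rho$, we also have the ``Lipschitz'' bound $|B_n-B_m| \le 2C_1 C_\rho|n-m|$. Inserting these together with $|\sigma_{n,m}| \le C_1\rho(|n-m|)$ and summing in $n,m$ yields a contribution of order $\|f\|_{\sM}\,C_0 C_1^{3/2}C_\rho^{1/2}B^{-3/2}N\sum_{k=1}^{N-1}k^{1/2}\rho(k)$.

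For $E_2$, set $\psi(t) := f''(W)[J_t^{(2)}]$; two applications of \eqref{eq:smooth} give $|\psi(s)-\psi(t)|\le 2C_0\|f\|_{\sM}|s-t|^{1/2}$. Assuming $\beta_n\ge 0$ so that $0=t_0\le\cdots\le t_N=1$ is monotone,
\[
|E_2(W)| = \Bigl|\sum_n\int_{t_n}^{t_{n+1}}[\psi(t)-\psi(t_n)]\,dt\Bigr| \le C C_0\|f\|_{\sM}\,B^{-3/2}\sum_n\beta_n^{3/2},
\]
and the crude bound $\sum_n\beta_n^{3/2} \le N(\max_n\beta_n)^{3/2}\le N(2C_1C_\rho)^{3/2}$ gives $|E_2(W)|\lesssim C_0\|f\|_{\sM}C_1^{3/2}C_\rho^{3/2}B^{-3/2}N$. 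Combining the estimates for $E_1$ and $E_2$ and using $\|f\|_{\sM}\lesssim \|g\|_{\sM}$ yields \eqref{eq:estim_II-2}.

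\textbf{Main obstacle.} The quadrature for $E_2$ above used $\beta_n\ge 0$ so that $\{t_n\}$ partitions $[0,1]$ monotonically; in general some $\beta_n$ may be negative. My plan in that case is to pass to the increasing rearrangement $\{s_j\}$ of $\{t_n\}$ with corresponding net signed weights and then exploit that the walk $n\mapsto B_n$ has increments bounded by $2C_1 C_\rho$ (so that the maximal gap satisfies $\max_j(s_{j+1}-s_j)\lesssim C_1 C_\rho/B$), together with the total-variation bound $\sum_n|\beta_n|\le 2N C_1 C_\rho$, to reproduce the same order of estimate. With that taken care of, everything else is routine given the bilinearity of $f''(W)$ and the smoothness of $f=\phi(g-\bfE[g(Z)])$ inherited from $g$.
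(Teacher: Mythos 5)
Your decomposition $II = \mu[E_2(W) - E_1(W)]$ is exactly the paper's $II = -R_1 - R_2$, and both terms are estimated in the same way: \eqref{eq:smooth} (pushed through to $f = \phi(\cdot)$ via the representation \eqref{eq:Dk_fml}) plus the bound $|B_n - B_m| \leq 2C_1 C_\rho\,|n-m|$ from (A1) for $R_1$, and the $\tfrac12$-H\"older modulus of $t\mapsto f''(W)[J_t^{(2)}]$ applied per interval for $R_2$.

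The \textbf{main obstacle} you flag is not actually an obstacle. The identity $\int_0^1 \psi(t)\,dt = \sum_n \int_{t_n}^{t_{n+1}} \psi(t)\,dt$ with $t_n = B_n/B$ is a telescoping of \emph{oriented} integrals and holds regardless of the sign of the $\beta_n$ (since $t_0=0$ and $t_N=1$). Likewise $B^{-1}\sum_n \beta_n\psi(t_n) = \sum_n \int_{t_n}^{t_{n+1}} \psi(t_n)\,dt$. For each term, $\bigl|\int_{t_n}^{t_{n+1}}[\psi(t)-\psi(t_n)]\,dt\bigr| \leq |t_{n+1}-t_n| \cdot \sup\{|\psi(t)-\psi(t_n)| : t \text{ between } t_n, t_{n+1}\} \lesssim C_0\|g\|_\sM\,|\beta_n/B|^{3/2}$, with or without monotonicity. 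So the increasing rearrangement you sketch is unnecessary; just write $|\beta_n|$ instead of $\beta_n$ throughout (and likewise $\max_n|\beta_n|$), and the argument closes as in the paper.
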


\begin{proof} Recall that $B = \sum_{n=0}^{N-1} \beta_n = \sum_{n=0}^{N-1}   \sum_{m = 0}^{N-1} \sigma_{n,m} $ and	
	decompose $II = - R_1 - R_2$, where 
	\begin{align*}
		R_1 = \mu \biggl\{ 
		B^{-1} \sum_{n=0}^{N-1}  \sum_{m=0}^{N-1}    \sigma_{n,m}   \biggl(   f''(W) [ \theta_n, \theta_m ] -  f''(W) [ \theta_n, \theta_n ] \biggr) 
		\biggr\} 
	\end{align*}	
	and 
	\begin{align*}
		R_2 =  \mu \biggl\{  
		B^{-1} \sum_{n=0}^{N-1}  \beta_n  f''(W) [ \theta_n, \theta_n ] - 
		\int_0^1  f''(W) [  J_t^{(2)} ] \, dt \biggr\}.
	\end{align*}
	We use \eqref{eq:Dk_fml} to compute $f''$ and then apply \eqref{eq:smooth} to obtain
	\begin{align*}
		|R_1| &=  \biggl| \mu \biggl\{ 
		B^{-1} \sum_{n=0}^{N-1}  \sum_{m=0}^{N-1}    \sigma_{n,m}  
		\bfE \int_0^\infty e^{-2u}  g''(  W e^{-u}  + \sigma(u) Z  )[ \theta_n, \theta_m    ] \\
		&- e^{-2u}  g''(  W e^{-u}  + \sigma(u) Z  )[ \theta_n, \theta_n   ]     \, du
		\biggr\} \biggr| \\
		&\le B^{-1} \sum_{n=0}^{N-1}  \sum_{m=0}^{N-1}   | \sigma_{n,m} |  \int_0^\infty e^{-2u} \, d u \cdot 
		C_0 \Vert g \Vert_\sM | ( B_n -  B_m ) / B  |^{1/2} \\ 
		&\le C_0 \Vert g \Vert_\sM B^{-3/2} \sum_{n=0}^{N-1}  \sum_{m=0}^{N-1}   |  \sigma_{n,m} |   |  B_n -  B_m  |^{1/2}.
	\end{align*}
	Using $\beta_n = B_{n+1} - B_n$, we see that 
	\begin{align*}
		R_2 &= \mu \biggl\{  \sum_{n=0}^{N-1}  \int_{  B_n / B }^{ B_{n+1} / B  }  f''(W) [ J^{(2)}_{ B_n /  B } ]  \, dt - 
		\sum_{n=0}^{N-1}  \int_{ B_n / B }^{ B_{n+1} / B  }  f''(W) [  J_t^{(2)} ] \, dt \biggr\} \\ 
		&= \mu \biggl\{  \sum_{n=0}^{N-1}  \int_{  B_n / B }^{  B_{n+1} / B  }  
		\bfE \int_0^\infty e^{-2u}  g''(  W e^{-u}  + \sigma(u) Z  )[ J^{(2)}_{ B_n /  B }  ] \\
		&- e^{-2u}  g''(  W e^{-u}  + \sigma(u) Z  )[ J_{ t }, J_{ t }    ]     \, du
		\, dt \biggr\}.
	\end{align*}
	Hence, by applying \eqref{eq:smooth} twice together with the symmetricity of $g''(w)$, we arrive at the estimate
	\begin{align*}
		|R_2| \lesssim \sum_{n=0}^{N-1}  C_0 \Vert g \Vert_\sM |  B_n / B - B_{n+1} / B  |^{3/2} 
		\le C_0 \Vert g \Vert_\sM    \sum_{n=0}^{N-1}  | \beta_n / B |^{3/2}.
	\end{align*}
	Finally, using (A1), we find that 
\begin{align*}
	&\sum_{n=0}^{N-1}  | \beta_n / B |^{3/2} = B^{-3/2} \sum_{n=0}^{N-1} \biggl|  \sum_{m = 0}^{N-1} \sigma_{n,m} \biggr|^{3/2}
	 \le         C_1^{3/2} B^{-3/2} \sum_{n=0}^{N-1} \biggl(  \sum_{m = 0}^{N-1} \rho(\vert n-m\vert) \biggr)^{3/2} \\
	&\lesssim 
	C_1^{3/2} B^{-3/2} \sum_{n=0}^{N-1} \biggl(  \sum_{k = 0}^{N-1}  \rho(k)   \biggr)^{3/2}
	=
	C_1^{3/2} B^{-3/2} N \biggl(  \sum_{k = 0}^{N-1}  \rho(k)   \biggr)^{3/2}
\end{align*}
and
\begin{align*}
	&\sum_{n=0}^{N-1}  \sum_{m=0}^{N-1}   |  \sigma_{n,m} |   |  B_n -  B_m  |^{1/2}
	\lesssim C_1  \sum_{n=0}^{N-1}  \sum_{m=0}^{N-1}   \rho(|n-m|)   |  B_n -  B_m  |^{1/2} \\ 
	&=  C_1     \sum_{n=0}^{N-1}  \sum_{m=0}^{N-1}  \rho( |n - m| )  \biggl|    \sum_{p=\min\{n,m\}}^{\max\{n,m\}-1}\sum_{q= 0}^{N-1}\sigma_{p,q} \biggr|^{1/2}  \\
	&\lesssim  C_1^{3/2}    \biggl(  \sum_{k = 0}^{N-1}  \rho(k)   \biggr)^{1/2}   \sum_{n=0}^{N-1}  \sum_{m=0}^{N-1} |n - m|^{1/2}  \rho( |n - m| )    \\
	&\lesssim C_1^{3/2} N \biggl(  \sum_{k = 0}^{N-1}  \rho(k)   \biggr)^{1/2}  \sum_{j=1}^{N-1} j^{1/2} \rho(j).
\end{align*}
\end{proof}

\subsubsection{Estimate on $I$} For brevity, denote
$$
\tilde{X}_n = B^{-1/2} X_n, \quad \text{and} \quad 
Y_n = \tilde{X}_n \theta_n.
$$
\begin{remark}
The specific form of $\theta_n$ plays no role in what follows. Indeed, all of the results in this section 
continue to hold if $\theta_n$ are replaced with arbitrary $\theta_n^* \in D$ such that 
$\Vert \theta_n^* \Vert_\infty \le 1$, provided that (A2) and (A3) hold with  $\theta_n^*$ in place of 
$\theta_n$.	
\end{remark}

Given integers $0 \le n < N$ and $m \in \bZ$, let $W_{n,m} = B^{-1/2} S_{n,m}$ and $Y_{n,m} =  B^{-1/2} T_{n,m}$ where $S_{n,m}$ and $T_{n,m}$ 
are defined as in \eqref{eq:Snm}.
Note that 
$$W_{n, N-1} = 0 \quad \text{and} \quad  W_{n,-1} = W.$$ 
We will estimate $I$ using (A1)-(A3) together with the following decomposition, which is a counterpart of 
\cite[Proposition 4]{S07}:

\begin{prop}\label{prop:decomp} For $s \in [0,1]$, $0 \le i, n < N$, and $m \in \bZ$, define 
	$$
	\xi_{i, n,m}(s) =  f''(   W_{n,m} + s Y_{n,m}     )[   \theta_i ,  \theta_n] \,   - f''(   W_{n,m}    )[   \theta_i ,  \theta_n]
	$$
	and
	$$
	\xi_{i, n,m} = \xi_{i, n,m}(1) =  f''(   W_{n,m-1}    )[   \theta_i ,  \theta_n] \,   - f''(   W_{n,m}  )[   \theta_i ,  \theta_n].
	$$
	Then, $I = -\sum_{i=1}^7 E_i$, 
	where $E_i = E_i(f)$ are given by
	\begin{align*}
		E_1 &= \sum_{n=0}^{N-1} \sum_{m=1}^{N-1}  \sum_{ \substack{|i - n| = m \\ 
				0 \le i < N } }  \int_0^1  \mu ( \tilde{X}_n 
		\tilde{X}_i   \xi_{i,n,m}(s)  )  \, ds  , \quad 
		E_2 = \sum_{n=0}^{N-1}      \int_0^1  \mu(   \tilde{X}_n^2  \xi_{n, n,0}(s) )  \, ds, \\
		E_3 &= \sum_{n=0}^{N-1}  \sum_{m=1}^{N-1}   \sum_{k=m + 1}^{2m}    \sum_{ \substack{|i - n| = m \\ 
				0 \le i < N } } \mu ( 
		\tilde{X}_n   \tilde{X}_i    \overline{\xi_{i, n,k}}   ), \quad 
		E_4 = \sum_{n=0}^{N-1}  \sum_{m=1}^{N-1}   \sum_{k= 2m + 1}^{N - 1}    \sum_{ \substack{|i - n| = m \\ 
				0 \le i < N } } \mu (
		\tilde{X}_n   \tilde{X}_i     \overline{\xi_{i, n,k}} ), \\ 
		E_5 &= \sum_{n=0}^{N-1}    \sum_{k= 1}^{N - 1}   \mu (
		\tilde{X}_n^2     \overline{\xi_{n, n,k}}  ), \quad 
		E_6 = - \sum_{n=0}^{N-1}  \sum_{m=1}^{N-1} \sum_{k=0}^m  \sum_{ \substack{|i - n| = m \\ 
				0 \le i < N } } \mu( \tilde{X}_n
		\tilde{X}_m  )   \mu(  \xi_{i,n,k}  ),   \\
		E_7 &= - \sum_{n=0}^{N-1}  \mu( \tilde{X}_n^2 ) \mu(  \xi_{n,n,0}  ).
	\end{align*}
	
\end{prop}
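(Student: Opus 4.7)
The plan is to establish the identity $I = -\sum_{i=1}^7 E_i$ through two successive telescoping decompositions combined with the fundamental theorem of calculus applied to Fr\'echet derivatives. First I would rewrite
\begin{align*}
I = \sum_n \mu\bigl(-\tilde X_n f'(W)[\theta_n]\bigr) + \sum_n\sum_{m=0}^{N-1}\sum_{\substack{|i-n|=m\\ 0\le i<N}} \mu(\tilde X_n\tilde X_i)\,\mu\bigl(f''(W)[\theta_n,\theta_i]\bigr),
\end{align*}
using $W = \sum_n \tilde X_n \theta_n$ and $B^{-1}\sigma_{n,i} = \mu(\tilde X_n\tilde X_i)$, and recall that for each fixed $n$ the rings $(Y_{n,m})_{m=0}^{N-1}$ partition $W$, with $W_{n,-1}=W$, $W_{n,N-1}=0$, and $W_{n,m-1}=W_{n,m}+Y_{n,m}$.

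Next, I would telescope the first term in $m$,
\begin{align*}
f'(W)[\theta_n] = f'(0)[\theta_n] + \sum_{m=0}^{N-1}\bigl(f'(W_{n,m-1}) - f'(W_{n,m})\bigr)[\theta_n],
\end{align*}
where the $f'(0)$-piece disappears because $\mu(\tilde X_n)=0$. The fundamental theorem of calculus applied to each increment gives
\begin{align*}
\bigl(f'(W_{n,m-1})-f'(W_{n,m})\bigr)[\theta_n] = \sum_{\substack{|i-n|=m\\ 0\le i<N}}\tilde X_i\int_0^1 f''(W_{n,m}+sY_{n,m})[\theta_n,\theta_i]\,ds.
\end{align*}
Splitting the integrand as $f''(W_{n,m})[\theta_n,\theta_i] + \xi_{i,n,m}(s)$ peels off precisely $E_1$ (for $m\ge 1$) plus $E_2$ (the diagonal $m=0$, $i=n$), leaving
\begin{align*}
I = -\sum_{n,m,i}\Bigl(\mu\bigl(\tilde X_n\tilde X_i\,f''(W_{n,m})[\theta_n,\theta_i]\bigr) - \mu(\tilde X_n\tilde X_i)\mu\bigl(f''(W)[\theta_n,\theta_i]\bigr)\Bigr) - (E_1+E_2).
\end{align*}

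To produce the remaining $E_i$'s I would use a second telescoping in the argument of $f''$: since $\sum_{k=0}^{m}\xi_{i,n,k} = f''(W)[\theta_n,\theta_i]-f''(W_{n,m})[\theta_n,\theta_i]$, taking expectations and substituting into the $\mu(f''(W)[\theta_n,\theta_i])$ factor above produces exactly $-(E_6+E_7)$, with $E_7$ absorbing the diagonal $m=0$, $i=n$. The leftover is a sum of expressions $\mu(\tilde X_n\tilde X_i\,\overline{f''(W_{n,m})[\theta_n,\theta_i]})$, which I would handle by the complementary telescoping $\sum_{k=m+1}^{N-1}\xi_{i,n,k} = f''(W_{n,m})[\theta_n,\theta_i]-f''(0)[\theta_n,\theta_i]$; because $f''(0)[\theta_n,\theta_i]$ is deterministic, its centered version is zero, so the leftover is rewritten as a sum of $\mu(\tilde X_n\tilde X_i\,\overline{\xi_{i,n,k}})$ over $k>m$. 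Isolating the diagonal $m=0$ gives $E_5$, and partitioning the remaining $k$-range at $k=2m$ produces $E_3$ and $E_4$. Assembling everything yields $I = -\sum_{i=1}^7 E_i$.

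The main difficulty is bookkeeping rather than analytical: one must carefully track the boundary indices so that $W_{n,-1}=W$ and $W_{n,N-1}=0$ collapse the two telescopings at the correct endpoints, and ensure that the diagonal contributions ($m=0$, $i=n$) are cleanly isolated into $E_2$, $E_5$, and $E_7$. The cutoff $k=2m$ separating $E_3$ from $E_4$ is not needed for the decomposition itself; it is a bookkeeping choice anticipating later estimates, in which the regime $k\in(m,2m]$ will be controlled by hypothesis (A2) and $k\in(2m,N-1]$ by (A3).
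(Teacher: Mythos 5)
Your proposal is correct and follows essentially the same route as the paper: telescope $f'(W)[\theta_n]$ over the rings $Y_{n,m}$, apply the fundamental theorem of calculus to peel off $E_1, E_2$, then center $f''(W_{n,m})$ and telescope again (upward to $W_{n,N-1}=0$ for $E_3,E_4,E_5$, downward to $W_{n,-1}=W$ for $E_6,E_7$). The only cosmetic difference is that you substitute the downward telescoping into $\mu(f''(W))$ directly, while the paper first rewrites the leftover as $\overline{f''(W_{n,m})}$ and a compensator before telescoping each; the bookkeeping is the same, including your observation that the $k=2m$ cutoff between $E_3$ and $E_4$ is chosen to anticipate conditions (A2) and (A3) rather than being forced by the identity.
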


\begin{proof} Recall that 
\begin{align*}
	- I = \mu \biggl\{   f'( W )[W] - B^{-1} \sum_{n=0}^{N-1}  \sum_{m=0}^{N-1}    \sigma_{n,m}   f''(W) [ \theta_n, \theta_m ]  \biggr\}.
\end{align*}
By linearity, $\mu (    f' (W)[ W ] ) = \sum_{n=0}^{N-1}   \mu (  \tilde{X}_n    f'(W) [  \theta_n  ]   )$.
	Since $\mu( \tilde{X}_n) = 0$, we have 
	$$
	\mu( \tilde{X}_n f'( W_{n, N-1} )[ \theta_n  ]  ) = \mu( \tilde{X}_n f'( 0 )[ \theta_n  ]  ) = f'( 0 )[ \theta_n ]  \mu( \tilde{X}_n) = 0.
	$$
	Hence, we can express $\mu (  f' (W)[ W ]  )$ as the following telescopic sum:
	\begin{align}
		\mu (  f' (W)[ W ]  ) &=  \sum_{n=0}^{N-1}   \mu (  \tilde{X}_n    f'(W) [  \theta_n  ]   ) 
		= \sum_{n=0}^{N-1}   \mu \biggl\{  \tilde{X}_n  \biggl(    f'(W_{n,-1}) [  \theta_n ]   -  f'(  W_{n,N-1} ) [  \theta_n  ]    \biggr)   
		\biggr\}  \notag \\ 
		&=  \sum_{n=0}^{N-1} \sum_{m=0}^{N-1}   \mu \biggl\{  \tilde{X}_n  \biggl(    f'(W_{n, m-1}) [  \theta_n  ]   -   f'(  W_{n,m} ) [  \theta_n  ]    \biggr)   
		\biggr\}. \label{eq:sunk-1}
	\end{align}
	Since $f' \in C^2$, we have 
	\begin{align*}
		f'(x + h)[ \cdot ] - f'(x)[ \cdot ] =	\int_0^1 f''( x  + s h  )[h, \cdot ] \, ds, \quad x,h \in D,
	\end{align*}
	where the integral is defined as a Bochner integral. Using the relation $W_{n,m} + Y_{n,m} = W_{n,m-1}$, 
	we find that 
	\begin{align}\label{eq:int_dif}
		f'( W_{n,m-1}  ) [ \theta_n ] - f'(W_{n,m}) [ \theta_n  ] = \int_0^1 f''(   W_{n,m} + s Y_{n,m}    )[  Y_{n,m} , \theta_n] \,  ds.
	\end{align}
	Substituting \eqref{eq:int_dif} into \eqref{eq:sunk-1},
	\begin{align*}
		\mu (  f' (W)[ W ] ) 
		&= 
		\sum_{n=0}^{N-1} \sum_{m=0}^{N-1}   \mu \biggl\{  \tilde{X}_n   \int_0^1 f''(   W_{n,m} + s Y_{n,m}     )[  Y_{n,m},  \theta_n] \,  ds
		\biggr\} \\
		&=  \sum_{n=0}^{N-1} \sum_{m=0}^{N-1}   \mu \biggl\{  \tilde{X}_n   \int_0^1 f''(   W_{n,m} + s Y_{n,m}     )[  Y_{n,m},  \theta_n] \,   - f''(   W_{n,m}   )[  Y_{n,m},  \theta_n ] \, ds
		\biggr\}  \\
		&+ \sum_{n=0}^{N-1} \sum_{m=0}^{N-1} 
		\mu \biggl\{  \tilde{X}_n   f''(   W_{n,m}    )[  Y_{n,m},  \theta_n]  \biggr\}\\
		&= E_1 + E_2 +  \sum_{n=0}^{N-1} \sum_{m=0}^{N-1}  \sum_{ \substack{|i - n| = m \\ 
				0 \le i < N } } 
		\mu \biggl\{  \tilde{X}_n   \tilde{X}_i   f''(   W_{n,m}    )[   \theta_i, \theta_n  ]  \biggr\}.
	\end{align*}
	
	Recalling $\sigma_{n,m} = \mu(X_n X_m)$, we see that 
	\begin{align*}
		&\mu \biggl\{  B^{-1} \sum_{n=0}^{N-1}  \sum_{m=0}^{N-1}    \sigma_{n,m}   f''(W) [ \theta_n, \theta_m ]  \biggr\} 
		=  \sum_{n=0}^{N-1}  \sum_{m=0}^{N-1}  \sum_{ \substack{|i - n| = m \\ 
				0 \le i < N } } 
		\mu \biggl\{   \mu( \tilde{X}_n
		\tilde{X}_i  )  f''(W) [  \theta_i , \theta_n ]  \biggr\}.
	\end{align*}
	Thus, what remains from $-I$ after subtracting $E_1 + E_2$ is 
	\begin{align}
		&-I - E_1 - E_2 \notag  \\
		&=\sum_{n=0}^{N-1}  \sum_{m=0}^{N-1}  \sum_{ \substack{|i - n| = m \notag  \\ 
				0 \le i < N } } \mu \biggl\{ 
		\tilde{X}_n   \tilde{X}_i   f''(   W_{n,m}    )[   \theta_i, \theta_n  ] - \mu( \tilde{X}_n
		\tilde{X}_i  )   f''(W) [ \theta_i, \theta_n ]  
		\biggr\} \\
		&=\sum_{n=0}^{N-1}  \sum_{m=0}^{N-1}  \sum_{ \substack{|i - n| = m  \label{eq:sunk-2} \\ 
				0 \le i < N } } \mu \biggl\{ 
		\tilde{X}_n   \tilde{X}_i   \overline{f''(   W_{n,m}    )[   \theta_i, \theta_n  ] } \biggr\} \\
		&- \sum_{n=0}^{N-1}  \sum_{m=0}^{N-1}  \sum_{ \substack{|i - n| = m \\ 
				0 \le i < N } } \mu( \tilde{X}_n
		\tilde{X}_m  )  \biggl(   \mu(   f''(W) [ \theta_i, \theta_n ] ) -   \mu(   f''(W_{n,m}) [ \theta_i, \theta_n ]  )  \biggr). \label{eq:sunk-3}
	\end{align}
	Since $\overline{f''(   W_{n,N-1}    )[   \theta_i,  \theta_n  ] }  = 0$, we can write
	\eqref{eq:sunk-2} as a telescopic sum:
	\begin{align*}
		\eqref{eq:sunk-2}
		&= \sum_{n=0}^{N-1}  \sum_{m=0}^{N-1}  \sum_{ \substack{|i - n| = m \\ 
				0 \le i < N } } \mu \biggl\{ 
		\tilde{X}_n   \tilde{X}_i  (  \overline{f''(   W_{n,m}    )[    \theta_i ,  \theta_n  ] }   
		-  \overline{f''(   W_{n,N-1}    )[    \theta_i ,  \theta_n  ] } ) 
		\biggr\} \\
		&= \sum_{n=0}^{N-1}  \sum_{m=0}^{N-1}   \sum_{k=m + 1}^{N-1}    \sum_{ \substack{|i - n| = m \\ 
				0 \le i < N } } \mu \biggl\{ 
		\tilde{X}_n   \tilde{X}_i  (  \overline{f''(   W_{n,k-1}   )[    \theta_i ,  \theta_n  ] }   
		-  \overline{f''(   W_{n,k}    )[    \theta_i,  \theta_n ] } ) \biggr\}  \\
		&= E_3 + E_4 + E_5.
	\end{align*}
	Finally, by writing 
	$$
	f''(W) [  \theta_i,  \theta_n ]  -    f''(W_{n,m}) [  \theta_i,  \theta_n ]  
	= \sum_{k=0}^m (   f''(W_{n,k-1} ) [  \theta_i,  \theta_n ]  -    f''(W_{n,k}) [  \theta_i ,  \theta_n ]       ),
	$$
	we find that 
	\begin{align*}
		\eqref{eq:sunk-3} 
		&= - \sum_{n=0}^{N-1}  \sum_{m=0}^{N-1} \sum_{k=0}^m  \sum_{ \substack{|i - n| = m \\ 
				0 \le i < N } } \mu( \tilde{X}_n
		\tilde{X}_m  )  \biggl(   \mu(  f''(W_{n,k-1}) [  \theta_i ,  \theta_n ] ) -   \mu(   f''(W_{n,k}) [  \theta_i,  \theta_n ]  )  \biggr) \\
		&= E_6 + E_7.
	\end{align*}
\end{proof}

Applying (A1)-(A3) to estimate each  $E_i$ separately for $1 \le i \le 7$, we obtain the following:

\begin{prop}\label{prop:estim_I} For any $g \in \sM$,
\begin{align}\label{eq:estim_I_stein}
I \le \sum_{i=1}^7 |E_i| \lesssim 
	B^{-3/2} N  L^3 \Vert g \Vert_{\sM} + 
	(C_1 L  + C_2 + C_3) B^{-3/2} N  \Vert g \Vert_\sM  \sum_{m=1}^{N-1} m \rho(m).
\end{align}
\end{prop}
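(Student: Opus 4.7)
The plan is to bound each $|E_i|$ for $1 \le i \le 7$ separately, using the hypotheses (A1)--(A3) together with the trivial Lipschitz estimate $\Vert \xi_{i,n,k}\Vert_\infty \lesssim \Vert g\Vert_\sM L B^{-1/2}$ that follows from \eqref{eq:taylor} applied to $f$ and from \eqref{eq:phi_bound}. The crucial preparatory step is to rewrite the increments $\xi_{i,n,m}(s)$ and $\xi_{i,n,k}$ in the exact form required by (A2) and (A3). Using the identity \eqref{eq:Dk_fml} for $f = \phi(g - \bfE[g(Z)])$ together with $W_{n,m} + s Y_{n,m} = B^{-1/2}(S_{n,m} + s T_{n,m})$ and $W_{n,k-1} = W_{n,k} + Y_{n,k}$, one obtains
$$\xi_{i,n,m}(s) = -\int_0^\infty e^{-2u} H_g\bigl(S_{n,m}, T_{n,m}; e^{-u}, s e^{-u}, \sigma(u), i, n\bigr)\, du,$$
and an analogous formula for $\xi_{i,n,k}$, in which $(s e^{-u})$ is replaced by $e^{-u}$ and $(S_{n,m}, T_{n,m})$ by $(S_{n,k}, T_{n,k})$. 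Since centering with respect to $\mu$ commutes with the integral in $u$, $\overline{\xi_{i,n,k}}$ is obtained by replacing $H_g$ with $\overline{H_g}$.

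With this representation in hand, I would bound $|E_1|$ by invoking (A2) with $k = m$ (which satisfies $m \le k$ with equality), producing a factor $B^{-3/2}\Vert g\Vert_\sM \rho(m)$; summing over $n$, over the at most two indices $i$ with $|i-n|=m$, and over $m\in[1,N-1]$, and using $1 \le m$, yields the desired $\sum_{m=1}^{N-1} m\rho(m)$. Terms $E_4$ and $E_5$ are handled similarly by (A3): for $E_4$ the restriction $k \ge 2m+1$ matches the hypothesis and yields $\rho(k-m)$; setting $j = k-m$ and swapping sums gives $\sum_{m}\sum_{j>m}\rho(j) \le \sum_j j\rho(j)$. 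For $E_5$, one has $i=n$ and $m=0$, and (A3) with $m=0$ supplies $\rho(k)$, which after summation is dominated by $\sum_m m\rho(m)$. Terms $E_2$ and $E_7$ yield to the trivial Lipschitz bound alone, since $\Vert Y_{n,0}\Vert_\infty \le L B^{-1/2}$ combines with $|\tilde X_n|^2 \le L^2 B^{-1}$ to produce $B^{-3/2}$ per term. Finally, $E_6$ follows by combining (A1) applied to $\mu(\tilde X_n \tilde X_i)$ with the trivial bound on $\mu(\xi_{i,n,k})$, with the sum over $k\in[0,m]$ contributing the combinatorial factor $m$.

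The main obstacle is $E_3$: its range $m+1 \le k \le 2m$ lies outside the regime of (A3), while its length is $m$, so summability cannot come from $\rho$ alone. To overcome this, I would decompose
$$\mu(\tilde X_n \tilde X_i \overline{\xi_{i,n,k}}) = \mu(\tilde X_n \tilde X_i \xi_{i,n,k}) - \mu(\tilde X_n \tilde X_i)\,\mu(\xi_{i,n,k}),$$
estimate the first summand by (A2) (which is applicable because $k\ge m+1 > m$) to obtain a bound of order $C_2 B^{-3/2}\Vert g\Vert_\sM \rho(m)$, and the second by combining (A1) with the Lipschitz bound $|\mu(\xi_{i,n,k})| \lesssim \Vert g\Vert_\sM L B^{-1/2}$, yielding $C_1 L \Vert g\Vert_\sM B^{-3/2}\rho(m)$. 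Summing over the $m$ admissible values of $k$ and the two choices of $i$ then supplies the missing factor $m$, matching the other contributions. Adding all seven estimates produces \eqref{eq:estim_I_stein}.
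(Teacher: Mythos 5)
Your proposal is correct and follows essentially the same route as the paper's own proof: you first rewrite $\xi_{i,n,m}(s)$ and $\xi_{i,n,k}$ via \eqref{eq:Dk_fml} as integrals of $H_g$, then bound $E_1$ by (A2), $E_4,E_5$ by (A3), $E_2,E_7$ by the trivial $\sM$-norm Lipschitz estimate, $E_6$ by (A1) plus the trivial bound on $\mu(\xi_{i,n,k})$, and—crucially—handle $E_3$ by splitting $\overline{\xi_{i,n,k}}$ into the uncentered term (bounded by (A2), using $k>m$) and the product term $\mu(\tilde X_n\tilde X_i)\mu(\xi_{i,n,k})$ (bounded by (A1) and the Lipschitz estimate), exactly as in the paper; the bookkeeping with $\rho$, $m$, and the number of indices $i$ is also consistent with the paper's estimates.
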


The proof of Proposition \ref{prop:estim_I} is similar to \cite[Theorem 2.1]{LS20} and therefore is deferred until Appendix \ref{sec:proof_estimates_Ei}.

\subsubsection{Estimate on $\mu[g(W)] - \bfE[g(Z)]$}\label{s:55}

Combining \eqref{eq:decomp}, \eqref{eq:estim_II-2}, and \eqref{eq:estim_I_stein}, we arrive at the upper bound
\begin{align*}
	&|  \mu[g(W)] - \bfE[g(Z)] | \le I + II \\
	&\lesssim B^{-3/2} N  L^3 \Vert g \Vert_{\sM} + 
	(C_1 L  + C_2 + C_3) B^{-3/2} N  \Vert g \Vert_\sM  \sum_{m=1}^{N-1} m \rho(m) \\
	&+ \Vert g \Vert_{\sM}C_0 C_1^{3/2} B^{-3/2} N C_\rho^{1/2}\biggl[ C_\rho 
	+  \sum_{j=1}^{N-1} j^{1/2} \rho(j) \biggr] \\
	&\lesssim 
	B^{-3/2} N \Vert g \Vert_{\sM} \biggl[ 
	L^3 + (C_1 L  + C_2 + C_3) \sum_{m=1}^{N-1} m \rho(m)+C_0 C_1^{3/2}   C_\rho^{1/2}\biggl[ C_\rho 
	+  \sum_{j=1}^{N-1} j^{1/2} \rho(j) \biggr]
	\biggr],
\end{align*}
where $C_\rho = \sum_{j=0}^{N-1} \rho(j)$.
This completes the proof of Theorem \ref{thm:prelim}.

\subsection{Verification of the conditions of Theorem \ref{thm:prelim} under \eqref{eq:fcb}}\label{s:56}
We derive the final upper bound in Theorem \ref{thm:main} by verifying that \eqref{eq:fcb} implies (A1)-(A3). 

Note that, using \eqref{eq:d2-lip} and the definition of $\Vert \cdot \Vert_{\sM}$ in Proposition 
\ref{prop:M_norm}, we obtain the following bounds for any $w_1, w_2, \hat{w}_1, \hat{w}_2 \in D$, $s,t,u \in [0,1]$ and $0 \le i,n < N$:
\begin{align}\label{eq:bounds_Hg}
\begin{split}
	&| H_g(w_1, w_2; s,t,u, i, n) | \lesssim \Vert g \Vert_{\sM} B^{-1/2} \Vert w_2 \Vert_\infty, \\
	&| H_g(w_1, w_2; s,t,u, i, n) - H_g( \hat{w}_1 , \hat{w}_2; s,t,u, i, n)  | \\
	&\lesssim \Vert g 
	\Vert_{\sM} B^{-1/2} ( \Vert w_1 - \hat{w}_1 \Vert_\infty + \Vert w_2 - \hat{w}_2 \Vert_\infty ).
\end{split}
\end{align}

\begin{prop}\label{prop:0118} Assume that $(X_n)_{0 \le n < N}$ satisfies \eqref{eq:fcb}. 
	Then, (A1)-(A3) in Theorem \ref{thm:prelim} hold with
	\begin{align*}
		\rho(n) = \hat{R}(n) , \quad C_1  = C C_* L^2  , \quad C_i = C ( C_*  + 1) (L + 1)^3, \quad i = 2,3,
	\end{align*}
	where $C > 0$ is an absolute constant, $\hat{R}(n) = \max_{n \le j \le N} R(j)$ for $n \ge 1$, and $\hat{R}(0) = 1$. Consequently, for any 
	$g \in \sM_0(C_0)$,
	$$
	| \mu [ g(W_N) ] - \bfE[g(Z)] |  \le C  C_{N}  B^{-3/2} N \Vert g \Vert_{\sM},
	$$
	where $C > 0$ is an absolute constant,
	$$
	C_{N} = L^3 + ( C_*  + 1) (L + 1)^3 \hat{R}_3(N)
	+ C_0 L^3 C_*^{3/2}  \hat{R}_1(N) ^{1/2}(\hat{R}_1(N)+  \hat{R}_2(N)),
	$$
	and
	$$
	\hat{R}_1(N) = \sum_{j=0}^{N-1} \hat{R}(j), \quad \hat{R}_2(N) = \sum_{j=0}^{N-1} j^{1/2} \hat{R}(j), \quad 
	 \hat{R}_3(N) = \sum_{j=0}^{N-1} j \hat{R}(j).
	$$
\end{prop}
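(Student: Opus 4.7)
The proposition asserts two things: (i) that \eqref{eq:fcb} implies conditions (A1)--(A3) of Theorem \ref{thm:prelim} with $\rho = \hat{R}$ and the asserted constants $C_1, C_2, C_3$; and (ii) that the stated error bound is the corresponding instance of Theorem \ref{thm:prelim}. Since (ii) is a routine substitution once (i) is in hand, the plan concentrates on verifying (A1)--(A3).

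For (A1), I would apply \eqref{eq:fcb} with $I_1 = \{\min(n,m)\}$, $I_2 = \{\max(n,m)\}$ and the separately Lipschitz function $F(x,y) = xy$ on $[-L,L]^2$, which satisfies $\Vert F \Vert_{\Lip} \le L^2 + L$. Because $\mu(X_n) = 0$, the factorized integral $\int F \, d(\nu_{I_1} \otimes \nu_{I_2})$ vanishes, so \eqref{eq:fcb} yields $|\mu(X_n X_m)| \le C C_* L^2 R(|n-m|) \le C C_* L^2 \hat{R}(|n-m|)$, giving (A1) with $C_1 \le C C_* L^2$.

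For (A2) and (A3), the shared preliminary step is to express the quantity of interest as $\int \tilde{F} \, d\nu_I$ where
\[
I = \{n, i\} \cup \{j \in \bZ_+ \cap [0, N-1] : |j - n| \ge k\}
\]
and $\tilde{F}$ is a separately Lipschitz function of $(X_j)_{j \in I}$. Using both bounds in \eqref{eq:bounds_Hg} together with $\Vert T_{n,k} \Vert_\infty \le 2L$, one checks that $\Vert \tilde{F} \Vert_\infty \le C L^3 B^{-1/2} \Vert g \Vert_{\sM}$ and that the Lipschitz constant of $\tilde{F}$ in each coordinate is bounded by $C L^2 B^{-1/2} \Vert g \Vert_{\sM}$, whence $\Vert \tilde{F} \Vert_{\Lip} \le C (L+1)^3 B^{-1/2} \Vert g \Vert_{\sM}$. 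For (A2) with $m \ge 1$, I split $I$ into three ordered blocks $I_1 = \{j \in I : j < n\}$, $\{n\}$, $I_3 = \{j \in I : j > n\}$, both gaps being $\ge m$. By \eqref{eq:fcb2}, the integral against the product measure $\nu_{I_1} \otimes \nu_{\{n\}} \otimes \nu_{I_3}$ factorizes and contains $\mu(X_n) = 0$, hence is zero; the decoupling error is $\le 2 C_* \Vert \tilde{F} \Vert_{\Lip} \hat{R}(m)$. The degenerate case $m = 0$ is handled by the trivial bound $|\mu(X_n^2 H_g)| \le L^2 \Vert H_g \Vert_\infty \le C L^3 B^{-1/2} \Vert g \Vert_{\sM}$, which involves no $C_*$ and accounts for the $+1$ in $C_2$. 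Combining yields (A2) with $C_2 \le C (C_* + 1)(L+1)^3$.

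For (A3), the centering $\overline{H_g}$ turns the quantity into the covariance $\mu(X_n X_i H_g) - \mu(X_n X_i) \mu(H_g)$, and the hypothesis $k \ge 2m+1$ makes the middle block $\{i, n\}$ separated from the support of $H_g$ by gaps $\ge k - m \ge m + 1$. A three-block \eqref{eq:fcb2} with middle block $\{i, n\}$ and outer blocks $I_1 = \{j \in I : j \le n - k\}$, $I_3 = \{j \in I : j \ge n + k\}$ replaces $\mu(X_n X_i H_g)$ by $\mu(X_n X_i) \int H_g \, d(\nu_{I_1} \otimes \nu_{I_3})$ at cost $\le 2 C_* \Vert \tilde{F} \Vert_{\Lip} \hat{R}(k-m)$. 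A further two-block \eqref{eq:fcb} between $I_1$ and $I_3$, whose gap is $\ge 2k \ge k-m$, then replaces the latter by $\mu(X_n X_i) \mu(H_g)$ at cost $\le L^2 \cdot C_* \Vert H_g \Vert_{\Lip} R(2k) \le C C_* (L+1)^3 B^{-1/2} \Vert g \Vert_{\sM} \hat{R}(k-m)$, using $\Vert H_g \Vert_{\Lip} \le C (L+1) B^{-1/2} \Vert g \Vert_{\sM}$. Adding the two gives (A3) with $C_3 \le C (C_* + 1)(L+1)^3$. Substituting $C_1, C_2, C_3$ and $\rho = \hat{R}$ into Theorem \ref{thm:prelim} then yields the claimed expression for $C_N$. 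The main technical obstacle is the careful bookkeeping of $\Vert \tilde{F} \Vert_{\Lip}$: one must simultaneously use both the pointwise and variable-Lipschitz bounds on $H_g$ from \eqref{eq:bounds_Hg}, together with the product structure of $X_n X_i H_g$, to obtain the $(L+1)^3 B^{-1/2} \Vert g \Vert_{\sM}$ growth that ultimately produces the $(L+1)^3$ factors in $C_2$ and $C_3$.
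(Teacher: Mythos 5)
Your proposal is correct and follows essentially the same route as the paper's proof: for (A1) you use a two-block \eqref{eq:fcb} on $F(x,y)=xy$ exploiting $\mu(X_n)=0$; for (A2) you factor $\tilde F = \varphi\cdot G$ with $\varphi(x_i,x_n)=x_ix_n$, bound $\Vert\tilde F\Vert_{\Lip}\lesssim (L+1)^3 B^{-1/2}\Vert g\Vert_\sM$ via \eqref{eq:bounds_Hg}, apply three-block \eqref{eq:fcb2} with middle block $\{n\}$ (so the product measure kills the integral via $\mu(X_n)=0$), and treat $m=0$ trivially; and for (A3) you use the same two-stage decoupling that the paper expresses as $R_1+R_2$ (first pulling $\{i,n\}$ out with a three-block bound costing $\hat R(k-m)$, then decoupling $I_1$ from $I_3$ with a two-block bound costing $R(2k)\le\hat R(k-m)$). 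The block choices, Lipschitz bookkeeping, and use of $\hat R$ as a dominating nonincreasing majorant of $R$ all match the paper.
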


\begin{proof} (A1): Given integers $0 \le n < m < N$, define:
	$$
	I = \{n,m\}, \quad  I_1 = \{ n \}, \quad I_2 = \{ m \}, \quad F(x,y) = xy.
	$$
	Then, $\Vert F \Vert_{ \text{Lip} } \lesssim L^2$ and, since $\mu(X_n)  = 0$,
	\begin{align*}
		\mu(X_n X_m) &= \int_M F(X_n(x), X_m(x))  \, d \mu(x) - \iint_{M \times M}F(X_n(x), X_m(y))  \, d \mu(x) \, d \mu(y) \\
		&= \int F \, d \nu_{I} 
		- \int F d ( \nu_{I_1} \otimes \nu_{I_2} ).
	\end{align*}
	Thus, it follows from \eqref{eq:fcb} that
	\begin{align*}
		| \mu(X_n X_m) | \lesssim  C_* L^2 R(n - m).
	\end{align*}
	for all $0 \le n , m < N$ with $R(0):=1$.
	
	\noindent (A2): In what follows, for any subset $J \subset \bZ_+$, we denote by $x_J$ 
	a general vector $(x_{n_1}, \ldots x_{n_{|J|} } )$ in $[-L, L]^{|J|}$ where $n_1 < \ldots < n_{|J|}$ is an enumeration
	of the elements of $J$ in increasing order. 
	
	Let $0 \le i,k,n,m < N$ be integers with
	$|i -n | = m > 0$ and $m \le k < N$. Given $g \in \sM_0$, $s,t,u \in [0,1]$, we define 
	\begin{align}\label{eq:def_F}
		F( x_I ) =  \varphi(x_i, x_n) G( x_{  J_{0, n - k }   },  x_{  J_{n + k, N - 1} }  ),
	\end{align}
	where $I = \{i,n\} \cup J_{0, n - k } \cup J_{n + k, N - 1} $, 
	$J_{ i_1, i_2 } = [i_1, i_2] \cap \bZ_+$, 
	$$
	\varphi(x_i, x_n) = x_ix_n \quad \text{and} \quad 
	G( x_{  J_{0, n - k }   },  x_{  J_{n + k, N - 1} }  ) = H_g \biggl(  \sum_{ \substack{ 0 \le i < N \\  |i - n| > k  } }   \theta_i(t)  x_i,  
	\sum_{ \substack{ 0 \le i < N \\  |i - n| = k  } }   \theta_i(t)  x_i;  s,t,u,i, n
	\biggr).
	$$
	Using \eqref{eq:bounds_Hg}, we find that 
	$$
	\Vert G \Vert_{\Lip} \lesssim \Vert g \Vert_{\sM} B^{-1/2} L + \Vert g \Vert_{\sM} B^{-1/2}.
	$$
	Therefore, 
	\begin{align}\label{eq:F_lip}
		\Vert F \Vert_{ \text{Lip}  } \lesssim  (L + 1)^3 \Vert g \Vert_{\sM} B^{-1/2}.
	\end{align}
	Suppose $i = n + m$. Recall that \eqref{eq:fcb} implies
	 \eqref{eq:fcb2} corresponding to a functional correlation bound
	 with an arbitrary number of index sets $K$. In this case, we set 
	\begin{align*}
		&K = 3, \quad  I_1 = J_{0, n-k}, \quad  I_2 = \{ n \},  \quad I_3 = \{ i\}  \cup J_{ n + k, N - 1 }.
	\end{align*}
	Since $\mu(X_n) = 0$, we have 
	$$
	\int F \, d ( \nu_{I_1} \otimes \nu_{I_2} \otimes \nu_{I_3} ) = 0.
	$$
	Now, an application of \eqref{eq:fcb2} yields
	\begin{align}
		&\biggl|\mu \biggl\{  X_n X_i   H_g(  S_{n,k}, T_{n,k} ; s,t,u,i, n)  \biggr\} \biggr| = \biggl| \int F \,  d \nu_{I}   -   \int F \,  d ( \nu_{I_1} \otimes \nu_{I_2} \otimes \nu_{I_3} )    \biggr| \notag \\
		&\lesssim C_*  (L + 1)^3 \Vert g \Vert_{\sM} B^{-1/2} ( R(k) + R(m)  ) \notag  \\
		&\lesssim  (C_* + 1 ) (L + 1)^3 \Vert g \Vert_{\sM} B^{-1/2} \hat{R}(m). \label{eq:fcb_appli_1}
	\end{align}
	If $i = n - m$, we set
	\begin{align*}
		&K = 3, \quad  I_1 = J_{0, n-k} \cup \{ i\}, \quad  I_2 = \{ n \},  \quad I_3 = J_{ n + k, N - 1 }
	\end{align*}
	and again obtain \eqref{eq:fcb_appli_1} by an application of \eqref{eq:fcb2}. Finally, 
	if $m = 0$, \eqref{eq:fcb_appli_1} follows directly from $\Vert F \Vert_\infty \lesssim (L + 1)^3 \Vert g \Vert_{\sM} B^{-1/2}$.

	\noindent (A3): The verification of (A3) is similar to that of (A2), so we only provide an outline. 
	Let $0 \le i,k,n,m < N$ be integers with $|i -n | = m$, $2m + 1 \le k < N$.
	Given $g \in \sM_0$, $s,t,u \in [0,1]$, we define $F$ as in \eqref{eq:def_F}. Further, let 
	\begin{align*}
		&I = \{i,n\} \cup J_{0, n - k } \cup J_{n + k, N - 1}, \quad I_1 =  J_{0, n - k }, \quad I_2 = \{i,n\}, \quad  I_3 = J_{n + k, N - 1}.
	\end{align*}
	Then, 
	$$
	\mu \biggl\{  X_n X_i   \overline{H_g(  S_{n,k}, T_{n,k} ; s,t,u,i, n)}  \biggr\} = \int F \, d \nu_{I} - \int  \varphi  \, d \nu_{ I_2  } 
	\int G \, d \nu_{ I_1 \cup I_3  } = R_1 + R_2,
	$$
	where
	$$
	R_1 = \int F \, d \nu_I  -  \int  \varphi  \, d \nu_{ I_2 } 
	\int G \, d (  \nu_{ I_1  } \otimes  \nu_{  I_3 } ),
	$$
	and 
	$$
	R_2 =   \int  \varphi  \, d \nu_{ I_2 }  \biggl(  \int G \, d (  \nu_{ I_{1}  } \otimes  \nu_{ I_{3}  } ) - \int G d \nu_{I_1 \cup I_3 }    \biggr).
	$$
	Since
	$$
	\int  \varphi  \, d \nu_{ I_2 } 
	\int G \, d (  \nu_{ I_{1}  } \otimes  \nu_{ I_{3}  } ) =  \int F  \, d(  \nu_{I_{1}} \otimes   \nu_{I_{2}} \otimes   \nu_{I_{3}}  ),
	$$
	we can apply \eqref{eq:fcb2} separately to $R_1$ and $R_2$, resulting in the bound
	$$|R_i| \lesssim (C_* + 1 ) (L + 1)^3 \Vert g \Vert_{\sM} B^{-1/2} \hat{R}(k - m), \quad i = 1,2.$$	
\end{proof}

\section{Proof of Lemma \ref{lem:fcb_pikovsky}}\label{sec:proof_pikovsky}

Let $T$ be the map defined in \eqref{eq:pikovsky} with parameter $\gamma > 1$. Recall that $\lambda$ 
denotes the Lebesgue measure on $M = [-1,1]$ normalized to probability. Let $\cL$ denote 
the transfer operator associated with $(T, \lambda)$.

\noindent\textbf{Convention:} Throughout this section, $C_\gamma$ denotes a generic positive 
constant depending only on $\gamma$. The value of $C_\gamma$ may change from one display 
to the next.

\subsection{Preliminary} In this section, we gather some basic (well-known) properties of Pikovsky maps that will be used in the proof of Lemma~\ref{lem:fcb_pikovsky}. In particular, we will rely on the distortion bounds and mixing estimates stated in \eqref{eq:dist} and \eqref{eq:ml_pikovsky}, respectively.

Denote by $g_{-} : (-1,1) \to M_{-}$ and $g_{+} : (-1,1) \to M_{+}$ the left and right inverse branches of $T$, respectively, where $M_{-} = (-1,0)$ and $M_{+} = (0,1)$.

Following \cite{CLM23}, for each $n \ge 1$, define
\begin{align*}
	\Delta_n^{-} = T^{-1} ( \Delta_{n-1}^{-} ) \cap M_{-}  \quad \text{and} \quad 
	\Delta_n^{+} = T^{-1} ( \Delta_{n-1}^{+} ) \cap M_{+},
\end{align*}
where 
\begin{align*}
	\Delta_0^{-} = ( g_{-}(0), 0  ) = T^{-1}(M_{+}) \cap M_{-} \quad \text{and} \quad \Delta_0^{+} = ( 0, g_{+}(0)  ) = T^{-1}(M_{-}) \cap M_{+}.
\end{align*}
Then, $\{ \Delta_n^{-}  \}_{n \ge 1}$ is a (mod $\lambda$) partition of $M_{-}$, $\{ \Delta_n^{+}  \}_{n \ge 1}$ is a (mod $\lambda$) partition of $M_{+}$, and $T$ maps $\Delta_n^{\pm}$ bijectively onto $\Delta_{n-1}^{\pm}$. Moreover, we have 
$\Delta^+_\ell = ( x_{\ell }^+, x_{\ell + 1}^+  )$ and $\Delta^-_\ell = ( x_{\ell + 1 }^-, x_{\ell}^-  )$,
where $x_{\ell}^{+} =  g_{+}^{\ell}(0)$ and $x_{\ell}^{-} =  g_{-}^{\ell}(0)$.

Next, for each $n \ge 1$ define 
\begin{align*}
	\delta_n^{-} = T^{-1} (   \Delta_{n-1}^{+}   ) \cap \Delta_0^{-} \quad \text{and}  \quad 
	\delta_n^{+} = T^{-1} (   \Delta_{n-1}^{-}   ) \cap \Delta_0^{+}.
\end{align*}
Then, $\{ \delta_n^{-}  \}_{n \ge 1}$  and $\{ \delta_n^{+}  \}_{n \ge 1}$ are (mod $\lambda$) partitions of 
$\Delta_0^{-}$ and $\Delta_0^{+}$, respectively. Moreover, $\delta_n^{-} = (y_{n-1}^{-}, y_n^{-} )$ 
and $\delta_n^{+} = (y_{n}^{+}, y_{n-1}^{+})$ where 
$y_n^{-} = g_{-}( x_{n-1}^{+} )$ and $y_n^{+} = g_{+}( x_{n-1}^{-} )$.
Note that the following maps are all bijective:
\begin{align*}
	&T : \delta_n^{-} \to \Delta_{n-1}^{+}, \quad T : \delta_n^{+} \to \Delta_{n-1}^{-}, \\ 
	&T^n : \delta_n^{-} \to \Delta_{0}^{+},  \quad T^n : \delta_n^{+} \to \Delta_{0}^{-}.
\end{align*}

By \cite[Lemma 2]{CHMV10}, the following estimates hold:
\begin{align}\label{eq:estim_partition}
	\begin{split}
	&1 - x_n^{+} \le C_\gamma n^{- \frac{1}{\gamma - 1}}, \quad x_n^{-} + 1 \le C_\gamma n^{- \frac{1}{\gamma - 1}}, \\
	&|\Delta_n^{\pm}| \le C_\gamma 
	n^{ - \frac{\gamma}{\gamma - 1}  }, \quad |y_n^{\pm}| \le C_\gamma n^{ - \frac{\gamma}{\gamma - 1}  }, \\
	&|\delta_n^{\pm}| \le |y_n^{\pm}| +  |y_{n+1}^{\pm}| \le C_\gamma n^{ - \frac{\gamma}{\gamma - 1}  }.
	\end{split}
\end{align}

Set $Y = \Delta_0^{-} \cup  \Delta_0^{+}$, and define the return time function
$$
\tau(x) = \inf \{  n \ge 1 \: : \:  T^n(x) \in Y \}, \quad x \in M.
$$
Define a partition of $M$ by 
\begin{align*}
	\cP = \{  \delta_{n}  \}_{n \ge 1} 
	\cup \{  \Delta_{n}  \}_{n \ge 1},
\end{align*}
where $\delta_n := \delta_{n}^{-} \cup \delta_{n}^{+} $ and $\Delta_n := \Delta_{n}^{-} \cup  \Delta_{n}^{+}$.
For each $a \in \cP$, the restriction $\tau |_a$ is a constant function whose value we denote by 
$\tau(a) \in \{1,2,\ldots\}$. Let $F_a : a \to Y$ be the first return map $F_a := T^{ \tau(a) }$, and let 
$m$ denote Lebesgue measure on $Y$, normalized to probability.

\begin{lem} There exists a constant $\fD > 0$ depending only on $\gamma$ such that the following holds for all $0 \le m < n$: Let $a \in \cP$ with $\tau(a) = n$. Then, for all $x, x' \in a$:
	\begin{align}\label{eq:dist}
		| 
		\log (T^{ n - m })' (  T^m )(x) - \log (T^{ n - m })' (  T^m )(x')
		| \le \fD | T^n(x) - T^n(x') |. 
	\end{align}
\end{lem}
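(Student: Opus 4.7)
The plan is to telescope the log-derivative and reduce \eqref{eq:dist} to a summable estimate. Setting $y = T^m(x)$, $y' = T^m(x')$, and $N = n-m$, the desired inequality reads
$$
|\log(T^N)'(y) - \log(T^N)'(y')| \le \fD\, |T^N(y) - T^N(y')|.
$$
Writing $\log(T^N)'$ as $\sum_{j=0}^{N-1} \log T' \circ T^j$ and applying the mean value theorem to each summand gives the upper bound $\sum_{j=0}^{N-1} H_j\, d_j$, where $H_j = \sup_{z \in T^{m+j}(a)} |T''(z)/T'(z)|$ and $d_j = |T^j y - T^j y'|$. Since $T^{N-j}$ is a monotone $C^2$ diffeomorphism on $T^{m+j}(a)$, MVT also yields $d_j = |T^N y - T^N y'|/(T^{N-j})'(\eta_j)$ for some $\eta_j \in T^{m+j}(a)$, reducing the claim to verifying
$$
\sum_{j=0}^{N-1} \frac{H_j}{(T^{N-j})'(\eta_j)} \le \fD
$$
uniformly in the data.

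The analysis splits according to whether the orbit meets the singular point $x=0$. If $a = \Delta_n^{\pm}$, or $a = \delta_n^{\pm}$ with $m \ge 1$, then each $T^{m+j}(a)$ is one of the intervals $\Delta_{n-m-j}^{\pm}$, a neighborhood of a neutral fixed point. The relations \eqref{eq:pikovsky} yield local expansions $T'(x) = 1 + \tfrac12(1-|x|)^{\gamma-1} + O((1-|x|)^{2(\gamma-1)})$ and $T''(x) = \mp\tfrac{\gamma-1}{2}(1-|x|)^{\gamma-2}(1+o(1))$ near $\pm 1$; combined with $1-|x| \sim \ell^{-1/(\gamma-1)}$ on $\Delta_\ell^{\pm}$ from \eqref{eq:estim_partition}, this gives $H_j \le C_\gamma (n-m-j)^{-(\gamma-2)/(\gamma-1)}$. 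For the denominator, I would expand $(T^{N-j})'(\eta_j) = \prod_{i=0}^{N-j-1} T'(T^i \eta_j)$ using the one-step asymptotic above and evaluate $\sum_{\ell=1}^{N-j} \log(1 + c/\ell) \approx \tfrac{\gamma}{\gamma-1}\log(N-j)$ to obtain the matching lower bound $(T^{N-j})'(\eta_j) \ge c_\gamma (n-m-j)^{\gamma/(\gamma-1)}$. The ratio $H_j/(T^{N-j})'(\eta_j) \le C_\gamma' (n-m-j)^{-2}$ is then summable, giving the required uniform bound.

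The remaining case is $a = \delta_n^{\pm}$ with $m=0$, where the $j=0$ term involves the singular point. On the branch $(0,1/(2\gamma))$ the map takes the explicit form $T(x) = (2\gamma x)^{1/\gamma} - 1$, giving $T''(x)/T'(x) = (1/\gamma - 1)/x$ and $T'(x) = 2(2\gamma x)^{1/\gamma-1}$. Combined with $|z| \sim n^{-\gamma/(\gamma-1)}$ on $\delta_n^{\pm}$ from \eqref{eq:estim_partition}, this yields $H_0 \sim n^{\gamma/(\gamma-1)}$ and $T'(\eta_0) \sim n$. Since $T(a) = \Delta_{n-1}^{\mp}$, the previous paragraph applied to $\Delta_{n-1}^{\mp}$ gives $(T^{N-1})'(T\eta_0) \ge c_\gamma n^{\gamma/(\gamma-1)}$, so that $H_0/(T^N)'(\eta_0) \le C_\gamma n^{-1}$ is bounded; the terms with $j \ge 1$ reduce to the neutral-fixed-point analysis above after applying $T$ once.

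The main technical obstacle is the lower bound on the product expansion $(T^{N-j})'(\eta_j) \ge c_\gamma (n-m-j)^{\gamma/(\gamma-1)}$, which is a weak form of the distortion statement being proved and must be established without circularity. I would derive it directly from the monotone recurrence $y_{\ell+1} = y_\ell - y_\ell^\gamma/(2\gamma)$ for $y_\ell = 1 - x_\ell^{+}$, producing matched asymptotics $y_\ell^{\gamma-1} = \frac{2\gamma}{(\gamma-1)\ell}(1+o(1))$ with explicit error terms independent of the starting point. Propagating these through $\log T'(x) = \tfrac12(1-|x|)^{\gamma-1} + O((1-|x|)^{2(\gamma-1)})$ gives $\log(T^{N-j})'(\eta_j) = \tfrac{\gamma}{\gamma-1}\log(n-m-j) + O(1)$ uniformly in $n,j,\eta_j$, with the $O(1)$ absorbed into $\fD = \fD(\gamma)$.
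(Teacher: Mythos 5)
Your argument only addresses the case where $x$ and $x'$ lie in the \emph{same} connected component of $a$, but the atoms of $\cP$ are disconnected: $\Delta_n = \Delta_n^- \cup \Delta_n^+$ and $\delta_n = \delta_n^- \cup \delta_n^+$. If $x \in \Delta_n^+$ and $x' \in \Delta_n^-$ (or analogously for $\delta_n$), then $T^{m+j}(x)$ and $T^{m+j}(x')$ lie near $+1$ and $-1$ respectively, and the interval joining them contains the singular point $0$. The mean-value step $|\log T'(T^{m+j}x) - \log T'(T^{m+j}x')| \le H_j\, d_j$ fails here: the intermediate point $\xi_j$ produced by MVT need not lie in $T^{m+j}(a)$, and $\sup|T''/T'|$ over the actual interval $[T^{m+j}x', T^{m+j}x]$ is not finite near $0$. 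Your phrasing ``if $a = \Delta_n^{\pm}$'' tacitly replaces the atom by one of its halves, which is not what the lemma asks for. The paper flags precisely this as its ``case (2)'' and resolves it by the odd symmetry $T(-z) = -T(z)$: since $(T^{n-m})'(-z) = (T^{n-m})'(z)$, one may replace $x'$ by $-x' \in \Delta_n^+$, apply the same-component estimate to $x$ and $-x'$, and then use $|T^n(x) - T^n(-x')| = |T^n(x) + T^n(x')| \le |T^n(x) - T^n(x')|$ (valid because $T^n(x)$ and $T^n(x')$ have opposite signs). That one-line reduction must be added before your argument can be considered complete.

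Apart from that gap, your same-component analysis is a legitimate and considerably more explicit route than the paper takes. The paper simply cites \cite[Proposition 3.10]{CLM23} for the distortion of first-return branches and the symmetric reflection trick of \cite[Remark 1]{CHMV10}; you instead carry out the standard telescoping-plus-backward-MVT reduction to $\sum_j H_j/(T^{N-j})'(\eta_j)$ and verify summability from the one-step asymptotics near $\pm1$ and near $0$. The exponent bookkeeping is right: $H_j \lesssim \ell^{(2-\gamma)/(\gamma-1)}$ and $(T^{N-j})'(\eta_j) \gtrsim \ell^{\gamma/(\gamma-1)}$ with $\ell = n-m-j$ give a ratio $\lesssim \ell^{-2}$, summable, and the $m=0$, $a=\delta_n^{\pm}$ singular term is correctly handled by balancing $H_0 \sim n^{\gamma/(\gamma-1)}$ against $T'(\eta_0)(T^{N-1})'(T\eta_0) \gtrsim n^{1+\gamma/(\gamma-1)}$. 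You also correctly identify the potential circularity in bounding $(T^{N-j})'$ from below and sketch a non-circular deduction from the boundary recurrence. This is more work than the citation route but is self-contained; once the cross-component symmetry step is supplied, the proof is sound.
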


\begin{proof} There are two cases: (1) $x, x' \in \delta_n^{\sigma}$ or $x, x' \in \Delta_n^{\sigma}$ where 
	$\sigma \in \{-, +\}$; (2) $x \in \delta_n^{\sigma_1}, x' \in \delta_n^{\sigma_2}$ or 
	$x \in \Delta_n^{\sigma_1}, x' \in \Delta_n^{\sigma_2}$ where $\sigma_1 \neq \sigma_2 \in \{-, +\}$. 
	The first case follows directly from \cite[Proposition 3.10]{CLM23}. The second case can be 
	treated as  in \cite[Remark 1]{CHMV10}, by exploiting symmetricity of the map \eqref{eq:pikovsky}.
\end{proof}

For a map $\psi : Y \to [0, \infty)$, define 
$$
|\psi |_{\LL} = \sup_{y \neq y' \in Y} \frac{| \log \psi(y) - \log \psi(y')  |}{|y - y'|},
$$
where we adopt the conventions $\log 0 = - \infty$ and $\log 0 - \log 0 = 0$. In the sequel, for 
a nonnegative measure $\mu$ on $Y$, we often
write $|\mu|_{\LL}$ to denote $|d \mu / d m|_{\LL}$, 
with the convention that the density of $\mu$ is fixed.

\begin{prop}\label{prop:gm} The first return map $F_a$ is a full-branch mixing Gibbs--Markov map. More precisely, the following hold for some constants $\Lambda > 1$ and $K, \delta_\# > 0$ depending only on $\gamma$.
\begin{itemize}
	\item[(i)] $F_a$ is bijective, and
	\begin{align}\label{eq:unif_exp}
	d( F_a(y), F_a(y')  ) \ge \Lambda d(y,y') \quad \forall y,y' \in a, \quad a \subset Y, \: a \in \cP.
	\end{align}
	Further, $F_a$ is non-singular with log-Lipschitz Jacobian:
	\begin{align}\label{eq:loglip}
	\zeta = \frac{d (F_a)_* (m|_a)}{dm}
	\quad \text{satisfies} \quad
	|\zeta|_{\LL} \leq K
	.
	\end{align}
	\item[(ii)] $(T^n)_*m(Y) \ge \delta_\#$ for all $n \ge 1$. 
\end{itemize}
In addition, $\tau$ is integrable with respect to $m$.
\end{prop}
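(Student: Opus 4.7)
The plan is to verify the listed Gibbs--Markov properties one at a time, extracting them from the partition-and-image structure in the preliminaries together with the distortion bound \eqref{eq:dist} and the size estimates \eqref{eq:estim_partition}. Bijectivity and the full-branch property $F_a(a) = Y$ will assemble directly from what the preliminaries already state: since $T^n$ is a bijection of each half $\delta_n^\pm$ onto $\Delta_0^\mp$, combining the halves gives $F_{\delta_n}(\delta_n) = \Delta_0^- \cup \Delta_0^+ = Y$, and injectivity is automatic because the two halves map to disjoint halves of $Y$.

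For the uniform expansion \eqref{eq:unif_exp} with $a = \delta_n$, I would note that $\delta_n \subset \Delta_0^- \cup \Delta_0^+$, on which a short computation from \eqref{eq:pikovsky} gives $T'(\pm 1/(2\gamma)) = 2$ and $T'(x) \to \infty$ as $x \to 0$, so $T'|_{\Delta_0^\pm} \ge 2$; for the subsequent iterates $T^k(x) \in \Delta_{n-k}^\mp$ with $1 \le k \le n-1$, an analogous computation gives $T'|_{\Delta_j^\pm} \ge 1$ for $j \ge 1$. The chain rule then yields $(T^n)'(x) \ge 2$ uniformly for $x \in \delta_n$ and all $n \ge 1$, establishing \eqref{eq:unif_exp} with $\Lambda = 2$. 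The log-Lipschitz property \eqref{eq:loglip} falls out of \eqref{eq:dist} with $m=0$: writing $\zeta(y) = 1/(T^n)'(x)$ with $x = F_{\delta_n}^{-1}(y)$,
\begin{equation*}
|\log \zeta(y) - \log \zeta(y')| = |\log(T^n)'(x') - \log(T^n)'(x)| \le \fD |T^n(x) - T^n(x')| = \fD |y - y'|,
\end{equation*}
giving $K = \fD$. Topological mixing of $F$ on $Y$ is an immediate consequence of the full-branch property, and combined with bounded distortion it yields the Gibbs--Markov mixing in the usual way.

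For the final assertions, Kac's lemma applied to the ergodic probability-preserving system $(T,\lambda)$ gives $\int_Y \tau \, dm = 1/\lambda(Y) < \infty$, so $\tau$ is $m$-integrable. For item (ii) I would use a renewal-theoretic argument: set $u_n = (T^n)_* m(Y) = m(\{x \in Y : T^n(x) \in Y\})$ and $f_n = m(\delta_n)$, decompose $\{T^n(x)\in Y\}$ by the first return time $\tau_1(x) = k \in \{1, \dots, n\}$, and use $m$-invariance of $F$ together with bounded distortion of $F_{\delta_k} : \delta_k \to Y$ (from \eqref{eq:loglip}) to obtain a two-sided comparison
\begin{equation*}
c_1 \sum_{k=1}^{n} f_k u_{n-k} \le u_n \le c_2 \sum_{k=1}^{n} f_k u_{n-k}
\end{equation*}
for positive constants $c_1, c_2$ depending only on $K$. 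Since $\sum f_n = 1$ with finite positive mean $\sum n f_n = 1/\lambda(Y)$ and $f_1 > 0$ excludes periodicity, the classical renewal theorem applied to $v_n = c_1 \sum_{k=1}^n f_k v_{n-k}$, $v_0 = 1$, gives $v_n \to c_1 \lambda(Y) > 0$; combined with the trivial positivity $u_n > 0$ for each $n \ge 1$, this delivers $\delta_\# := \inf_{n \ge 1} u_n > 0$. The main obstacle will be justifying the bounded-distortion comparison cleanly, since one must compare the Jacobians of the full-branch bijections $F_{\delta_k} : \delta_k \to Y$ against $m$ uniformly in $k$.
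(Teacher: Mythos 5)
Your treatment of item (i) and of the integrability of $\tau$ is sound and agrees with the paper's: \eqref{eq:unif_exp} is read off from \eqref{eq:pikovsky} (note the statement restricts to $a \subset Y$, so only $a = \delta_n$ is relevant, as you use), \eqref{eq:loglip} follows from \eqref{eq:dist} with $m=0$, and integrability of $\tau$ follows either by Kac's lemma as you do or by the direct computation $\int \tau\,dm = \sum_n m(\tau \ge n) \le \sum_n(|y_n^-|+|y_n^+|) < \infty$ via \eqref{eq:estim_partition}, which is what the paper does.

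The proposed argument for item (ii), however, has a genuine gap. You derive the two-sided convolution bound $c_1\sum_{k=1}^n f_k u_{n-k} \le u_n \le c_2\sum_{k=1}^n f_k u_{n-k}$ with $c_1 < 1 < c_2$, and then assert that the classical renewal theorem applied to the recursion $v_n = c_1\sum_{k=1}^n f_k v_{n-k}$, $v_0=1$, gives $v_n \to c_1\lambda(Y) > 0$. This is not what the renewal theorem says: with $c_1 < 1$, the sequence $\{c_1 f_k\}$ is a \emph{defective} (sub-probability) increment distribution, $\sum_k c_1 f_k = c_1 < 1$, and the associated renewal sequence $v_n$ tends to $0$, not to a positive limit. (Equivalently, a direct induction shows nothing like $u_n \ge c_1 w_n$ with $w$ the honest renewal sequence: try $n=2$ and you already need $c_1 \ge 1$.) So the inequality alone does not deliver $\inf_n u_n > 0$. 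The obstacle you flag at the end — uniformity in $k$ of the distortion comparison for $F_{\delta_k} : \delta_k \to Y$ — is actually not a problem, since \eqref{eq:dist} provides a branch-uniform distortion constant $\fD$; the real difficulty is converting a two-sided convolution \emph{inequality} into a uniform lower bound. The paper sidesteps this entirely by verifying the hypotheses of \cite[Proposition 2.1]{KL24} — namely a polynomial tail bound for $(T^j)_*m(\tau\ge N)$ (obtained from \eqref{eq:estim_partition}) and $m(\tau=1) > 0$ — which encapsulates precisely this kind of uniform positivity for Gibbs--Markov first-return maps. To repair your route one would need either an approximate renewal \emph{equality} with a controllable additive error, an operator renewal-theoretic argument (e.g.\ a spectral gap for $\mathcal L_F$), or an appeal to mixing of $(T,\lambda)$ giving $u_n \to \lambda(Y) > 0$.
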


\begin{proof} \textbf{(i):} \eqref{eq:unif_exp} is easy to see by inspecting the definition \eqref{eq:pikovsky}, while 
\eqref{eq:loglip} is a direct consequence of \eqref{eq:dist}.

\noindent \textbf{(ii):} By \eqref{eq:estim_partition}, for each $j,N \ge 1$, we have 
$$
(T^j)_*m(  \tau \ge N  ) \le \frac{(T^j)_* \lambda (  \tau \ge N  )}{  \lambda(Y) }
= \frac{ \lambda (  \tau \ge N  )}{  \lambda(Y) }
\le C_\gamma N^{ - 1 / ( \gamma - 1 ) }.
$$
Moreover, $m(  \tau = 1 ) \ge m( \delta_1^{-} ) \ge C_\gamma > 0$. Thus, (ii) follows\footnote{This result is applied in the case of the constant sequence $T_1 = T_2 = \ldots = T$.
} by \cite[Proposition 2.1]{KL24}.

Finally, using \eqref{eq:estim_partition}, we compute 
$$
\int \tau d m = \sum_{n=1}^\infty m ( \tau \ge n ) = \sum_{n=1}^\infty m \biggl(  \bigcup_{\ell \ge n} 
\delta_{\ell}^{ \pm }
\biggr) \le \sum_{n=1}^\infty (  |y_{n}^{-}| + |y_{n}^{+}| ) < \infty. 
$$
\end{proof}

It follows 
 from Proposition \ref{prop:gm} 
(see
\cite[Proposition 3.4]{KL21} and 
\cite[Proposition 3.1]{KKM19}) that there exist constants $0 < K_1 < K_2$ depending only 
on $\gamma$, such that whenever $\mu$ is a nonnegative measure on $Y$ with
$| \mu |_{\LL} \le K_2$, then for each $a \in \cP$, $a \subset Y$, 
\begin{align}\label{eq:K1}
|  (  F_a  )_*(  \mu |_{ a } )  | \le K_1.
\end{align}
Moreover, $K_1,K_2$ can be chosen arbitrarily large. Fix such $K_1,K_2$. Then, we have the 
following mixing estimate:

\begin{lem}\label{lem:mixing_pikovsky} Let $\mu$ be a probability measures on $M$ with density $h = d \mu / d \lambda$. Suppose 
that the following hold:
\begin{align}\label{eq:regular_measure}
|   (  T^{ n } )_*(  \mu |_{ \{ \tau = n  \} } )  |_{\LL} \le K_1 \quad \forall n \ge 1,
\end{align}
and 
\begin{align}\label{eq:tail_bound}
	\mu(  \tau \ge n  ) \le C_\beta n^{- \beta } \quad \forall n \ge 1,
\end{align}
where $\beta \le \gamma/( \gamma - 1)$.
Then,
\begin{align}\label{eq:ml_pikovsky}
\Vert \cL^n (h)  - 1  \Vert_{ L^1( \lambda ) } \le C n^{  - \beta },
\end{align}
where $C$ is a constant depending only on $K_1,K_2, C_\beta$ and $\gamma, \beta$.
\end{lem}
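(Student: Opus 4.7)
The plan is a coupling argument in the style of Young's tower construction, using the first return map $F = T^\tau : Y \to Y$ from Proposition \ref{prop:gm} as the base dynamics. First I would decompose the starting measure by first return time, writing $\mu = \sum_{k \ge 1} \mu|_{\{\tau = k\}}$. The push-forwards $\nu_k = (T^k)_*(\mu|_{\{\tau = k\}})$ are supported on $Y$ with log-Lipschitz densities of constant $\le K_1$ by hypothesis \eqref{eq:regular_measure}, and by the cone invariance \eqref{eq:K1} their further images under the transfer operator $\cL_F$ of $F$ remain in the cone $\{\rho \ge 0 : |\rho|_{\LL} \le K_2\}$.

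Next I would invoke the classical Gibbs--Markov mixing theorem for $F$: since $F$ is full-branch, uniformly expanding \eqref{eq:unif_exp}, with log-Lipschitz Jacobian \eqref{eq:loglip}, there exist $\theta_0 \in (0,1)$ and $C' > 0$ depending only on $K_1, K_2$ and $\gamma$ such that $\|\cL_F^j \rho_1 - \cL_F^j \rho_2\|_{L^1(m)} \le C' \theta_0^j$ for any two probability densities $\rho_1, \rho_2$ in the cone. Equivalently, one has a coupling on the base whose failure probability after $j$ iterates is $\le C' \theta_0^j$. I would lift this coupling to $M$ via the Young tower representation, so that two $Y$-points matched at some return time follow identical $T$-trajectories from then on.

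Choosing the cutoff $N = \lfloor n/2 \rfloor$, the mass $\mu(\tau > N) \le C_\beta N^{-\beta} = O(n^{-\beta})$ is absorbed directly into the total-variation error between $(T^n)_*\mu$ and $\lambda$. The complementary mass has entered $Y$ by time $N$; coupling it to the invariant distribution gives a base-level error $O(\theta_0^{cN})$ after roughly $cN$ returns (for some $c > 0$), plus a tower-tail discrepancy of order $\lambda(\tau > n - N) = O(n^{-1/(\gamma-1)})$ by \eqref{eq:estim_partition}. The hypothesis $\beta \le \gamma/(\gamma-1)$ is exactly what ensures that this natural tower tail is subsumed by the $O(N^{-\beta})$ term. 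Combining all three contributions yields $\|\cL^n h - 1\|_{L^1(\lambda)} = O(n^{-\beta})$.

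The main obstacle is the last step: controlling the random number of returns to $Y$ during $n$ iterates of $T$, so that the lifted coupling succeeds at the claimed polynomial rate rather than at the slower rate a naive bound would give. I would handle this by combining the tail estimate \eqref{eq:estim_partition} with the Kac identity relating the $F$-invariant density $\hat h$ on $Y$ to $\lambda$ on $M$, following the template already applied to LSV maps in \cite{L17}; the present Pikovsky setting differs only in that the return-map partition $\cP$ contains both tangential and singular branches, but \eqref{eq:dist} and Proposition \ref{prop:gm} allow the same argument to go through verbatim.
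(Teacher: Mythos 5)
The paper does not prove this from scratch: it cites \cite[Theorem 2.6]{KL24}, checking that (NU:1--5) from that reference follow from Proposition~\ref{prop:gm}, that \eqref{eq:regular_measure} and \eqref{eq:tail_bound} make $\mu$ a regular measure in the terminology there, and that $m(\tau\ge n)\le C_\gamma n^{-\gamma/(\gamma-1)}$ by \eqref{eq:estim_partition}. Your sketch instead proposes a from-scratch coupling argument on the induced tower, so the approaches are genuinely different. Unfortunately, your error accounting contains a mistake that breaks the conclusion.

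The problem is the claimed additive error ``$\lambda(\tau>n-N)=O(n^{-1/(\gamma-1)})$'' together with the assertion that $\beta\le\gamma/(\gamma-1)$ makes this term subsumed by the $O(N^{-\beta})$ contribution. Since $\gamma>1$ one has $\gamma/(\gamma-1)=1+1/(\gamma-1)>1/(\gamma-1)$, so the hypothesis of the lemma allows $\beta\in\bigl(1/(\gamma-1),\gamma/(\gamma-1)\bigr]$; for such $\beta$ we have $n^{-1/(\gamma-1)}\gg n^{-\beta}$, and your proposed bookkeeping yields only $\|\cL^n h-1\|_{L^1(\lambda)}=O\bigl(n^{-\min(\beta,\,1/(\gamma-1))}\bigr)$, which is strictly weaker than \eqref{eq:ml_pikovsky} on most of the permitted range of $\beta$. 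The constraint $\beta\le\gamma/(\gamma-1)$ in the lemma is a comparison with the return-time tail of $m$ on the base $Y$, namely $m(\tau\ge n)\lesssim n^{-\gamma/(\gamma-1)}$, not with the ``height'' tail $\lambda(\tau>n)\sim n^{-1/(\gamma-1)}$ on $M$. In a correct coupling argument the mass of $(T^n)_*\mu$ sitting in tall columns at time $n$ is not lost: it originates from base returns at earlier times and, once the base is coupled, it is matched level-by-level against the corresponding Lebesgue mass in tall columns, so it does not contribute $\lambda(\tau>n-N)$ as a deficit. What actually enters as the polynomial error are $\mu(\tau>n)$ and convolution/renewal terms governed by $m(\tau\ge\cdot)$, and this is where the requirement $\beta\le\gamma/(\gamma-1)$ comes from. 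Your outline thus has the right ingredients (decomposition by first return, cone invariance, exponential Gibbs--Markov base mixing, lifting to the tower) but the naive ``sum three errors'' step omits the renewal/matching analysis that is precisely where the polynomial rate is decided; without that step the proof fails. The remark that the argument ``goes through verbatim'' from \cite{L17} for LSV maps also cannot be accepted as stated for the same reason: the delicate part is exactly this tower-level matching, and it has to be carried out (or, as the paper does, delegated to \cite[Theorem~2.6]{KL24}).
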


\begin{proof} The desired estimate follows as an immediate consequence of \cite[Theorem 2.6]{KL24}, applied in the special case $T_1 = T_2 = \ldots = T$. To see this, it suffices to observe the following:
\begin{itemize}
	\item In the terminology of \cite{KL24}, \eqref{eq:regular_measure} and \eqref{eq:tail_bound} together mean that that $\mu$ is a regular measure with tail bound $r(n) := C_\beta n^{- \beta }$.
	\item Assumptions (NU:1-5) in \cite[Section 2.1]{KL24} are satisfied by Proposition \ref{prop:gm}.
	\item By \eqref{eq:estim_partition}, 
	$m( \tau \ge n ) \le C_\gamma  n^{ -  \gamma/( \gamma - 1)  }$.
\end{itemize}
\end{proof}

\begin{remark} Estimate \eqref{eq:ml_pikovsky} is used only in \eqref{eq:ml_appli} to obtain
$\Vert \cL^m ( v  ) - 1 \Vert_{L^1(\lambda)} = O(m^{ - 1 / ( \gamma - 1 ) })$ for the function 
$v = \cL^{\ell_p}(h_{ \ell_0 \ldots \ell_p })$ where $h_{ \ell_0 \ldots \ell_p }$ is defined in \eqref{eq:h_ls}.
Alternatively, the latter estimate follows from \cite[Theorem~2]{Y99} combined with the results of \cite{CM06}, 
since, in the notation of \cite{Y99}, $v$ lifts to a function in $\cC_\beta^+(\Delta)$.
\end{remark}

\subsection{Cylinders} Define 
$$
L(n) = \sup \{ p \ge 0  \: : \: \tau^p \le n \},
$$
where $\tau^0 = 0$ and 
$
\tau^j = \tau^{j-1} + \tau \circ T^{  \tau^{j-1} }
$ for $j \ge 1$. This quantity
represents the number of returns to $Y$ by time $n$. Given 
$n \ge 1$, we consider the (mod $\lambda$) partition of $M$ into the cylinder sets
\begin{align}\label{eq:cylinder}
	C_n(\ell_0, \ldots, \ell_p) = \{  \tau_1 = \ell_0, \ldots, \tau_{p} = \ell_{p} \} \cap \{L(n) = p\},
\end{align}
where $1 \le \ell_0 < \ldots < \ell_{p - 1} \le n < \ell_{p}$ if $1 \le p \le n$, and $\ell_0 > n$ if $p = 0$. Here,
$\ell_0 < \ldots < \ell_{p-1}$ enumerate returns to $Y$ by time $n$, and $\ell_{p}$ is the first return time 
after $n$. By definition, we have $\tau^{ L(n)  } \le n < \tau^{ L(n) + 1 }$.
We can express $C_n(\ell_0, \ldots, \ell_{p})$ using elements of the partition $\cP$ as follows:
\begin{align*}
	C_{n}(\ell_0, \ldots, \ell_{p}) &=  ( \delta_{\ell_0} \cup \Delta_{\ell_0} ) \cap  \bigcap_{j=0}^{p-1}  T^{- \ell_j } \delta_{\ell_{j+1} - \ell_j}, \quad \text{for $p > 0$},  \\
	C_{n}( \ell_ 0 ) &= \delta_{\ell_0} \cup \Delta_{\ell_0}, \quad \text{for $p = 0$}.
\end{align*}

Using \eqref{eq:estim_partition} and  \eqref{eq:dist}, we can control the size of 
cylinder sets:

\begin{prop} For all $p \ge 1$:
	\begin{align}\label{eq:cylinder_estim-1}
		\begin{split}
			&\lambda(C_{n}(\ell_0)) \le  C_\gamma \ell_0^{  - \gamma / ( \gamma - 1  ) }, \\
			&\lambda(  C_{n}(\ell_0, \ldots, \ell_p)  ) \le  \min\{  \Lambda^{-1},  C_\gamma \ell_0^{ - \gamma /( \gamma - 1 )  } \}
			\prod_{j=0}^{p-1} \min \{   \Lambda^{-1}, C_\gamma ( \ell_{j+1} - \ell_j  )^{ - \gamma /( \gamma - 1 )  } \}.
		\end{split}
	\end{align}
	In particular, for all $p \ge 0$, 
	\begin{align}\label{eq:cylinder_estim-2}
		\lambda (  C_{n}(\ell_0, \ldots, \ell_p)  )  \le C n^{ - \gamma /( \gamma - 1 )  }.
	\end{align}
\end{prop}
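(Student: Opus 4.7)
The plan is to bound $\lambda(C_n(\ell_0,\ldots,\ell_p))$ by combining three ingredients: the cell-size estimates \eqref{eq:estim_partition}, the uniform expansion \eqref{eq:unif_exp} of the first-return map (which gives $\lambda(a)\le \lambda(Y)/\Lambda\lesssim \Lambda^{-1}$ for every $a\in\cP$), and the bounded distortion \eqref{eq:dist}. The base case $p=0$ is immediate: $C_n(\ell_0)=\delta_{\ell_0}\cup\Delta_{\ell_0}$, and both pieces have $\lambda$-measure at most $C_\gamma\ell_0^{-\gamma/(\gamma-1)}$ by \eqref{eq:estim_partition}; the companion bound $\lambda(a)\le C\Lambda^{-1}$ follows from the bijectivity $F_a:a\to Y$ and $|F_a(y)-F_a(y')|\ge\Lambda|y-y'|$.

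For $p\ge 1$, observe that $C_n(\ell_0,\ldots,\ell_p)$ decomposes into two pieces $C'$ according to whether the initial element of $\cP$ containing $C'$ is $\delta_{\ell_0}$ or $\Delta_{\ell_0}$; on either piece, the subsequent sequence of visited elements of $\cP$, namely $a_{j+1}\ni T^{\ell_j}(C')$ with $\tau(a_{j+1})=\ell_{j+1}-\ell_j$, is uniquely determined (within $Y$, only the single cell $\delta_k\in\cP$ has first-return time $k$). On such a piece the chain rule reads
\begin{equation*}
(T^{\ell_p})'(x)=(T^{\ell_0})'(x)\prod_{j=0}^{p-1}(T^{\ell_{j+1}-\ell_j})'(T^{\ell_j}(x)).
\end{equation*}
For every $a\in\cP$ with $\tau(a)=m$, bounded distortion \eqref{eq:dist} together with $\lambda(T^m(a))\asymp 1$ gives $(T^m)'|_a\asymp 1/\lambda(a)$, so each factor above is $\gtrsim\max\{\Lambda,m^{\gamma/(\gamma-1)}\}$ by \eqref{eq:estim_partition} and the base case. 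Iterating \eqref{eq:dist} blockwise and using \eqref{eq:unif_exp} on subsequent blocks to bound the telescoping sum $\sum_j|T^{\ell_j}(x)-T^{\ell_j}(x')|\le 2\sum_k\Lambda^{-k}<\infty$ shows that $T^{\ell_p}$ itself has bounded distortion on $C'$ uniformly in $p$. A change of variables with $\lambda(T^{\ell_p}(C'))\le\lambda(Y)$ then yields
\begin{equation*}
\lambda(C')\le \frac{C}{\inf_{C'}(T^{\ell_p})'}\le C\min\{\Lambda^{-1},C_\gamma\ell_0^{-\gamma/(\gamma-1)}\}\prod_{j=0}^{p-1}\min\{\Lambda^{-1},C_\gamma(\ell_{j+1}-\ell_j)^{-\gamma/(\gamma-1)}\},
\end{equation*}
and summing over the two pieces gives \eqref{eq:cylinder_estim-1}.

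For \eqref{eq:cylinder_estim-2}, set $k_0=\ell_0$ and $k_j=\ell_j-\ell_{j-1}$ for $1\le j\le p$. Since $\sum_{j=0}^p k_j=\ell_p>n$, pigeonhole yields some $k_{j^*}>n/(p+1)$. Retaining the polynomial bound $C_\gamma k_{j^*}^{-\gamma/(\gamma-1)}\le C_\gamma(p+1)^{\gamma/(\gamma-1)}n^{-\gamma/(\gamma-1)}$ at the $j^*$-th factor of \eqref{eq:cylinder_estim-1} and bounding the remaining $p$ factors by $\Lambda^{-1}$ gives
\begin{equation*}
\lambda(C_n(\ell_0,\ldots,\ell_p))\le C_\gamma(p+1)^{\gamma/(\gamma-1)}\Lambda^{-p}n^{-\gamma/(\gamma-1)}\lesssim n^{-\gamma/(\gamma-1)},
\end{equation*}
since $(p+1)^{\gamma/(\gamma-1)}\Lambda^{-p}$ is bounded uniformly in $p\ge 0$.

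The main technical step is verifying bounded distortion for the composed map $T^{\ell_p}$ on $C'$ uniformly in $p$: this requires summing the blockwise distortion contributions $\fD|T^{\ell_{j+1}}(x)-T^{\ell_{j+1}}(x')|$ via the backward contraction supplied by \eqref{eq:unif_exp} on later blocks, which in turn relies on the full-branch Gibbs--Markov structure from Proposition \ref{prop:gm}(i). Once this is in hand, both \eqref{eq:cylinder_estim-1} and \eqref{eq:cylinder_estim-2} follow from a routine chain-rule plus pigeonhole calculation.
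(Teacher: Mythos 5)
Your proof is correct, and it reaches \eqref{eq:cylinder_estim-1} and \eqref{eq:cylinder_estim-2} by the same three ingredients the paper relies on: the cell-size estimates \eqref{eq:estim_partition}, uniform block expansion \eqref{eq:unif_exp}, and the per-block distortion bound \eqref{eq:dist}, assembled via the chain rule and a pigeonhole argument. What differs is the assembling step. The paper iterates the contraction estimate \eqref{eq:ind_cylinder}
\begin{equation*}
|T^{\ell_j}(x)-T^{\ell_j}(x')| \le \min\bigl\{\Lambda^{-1},\,C_\gamma\lambda(\delta_{\ell_{j+1}-\ell_j})\bigr\}\,|T^{\ell_{j+1}}(x)-T^{\ell_{j+1}}(x')|
\end{equation*}
from $j=p-1$ down to $j=-1$, which directly bounds the diameter (and hence, componentwise, the Lebesgue measure) of the cylinder without ever invoking distortion of the composed map $T^{\ell_p}$. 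You instead establish uniformly bounded distortion for $T^{\ell_p}$ on the cylinder via the telescoping sum $\sum_j\Lambda^{-j}$ and then apply a change of variables. That works, and it yields the slightly stronger byproduct that $T^{\ell_p}$ has $p$-uniform distortion on cylinders; but it is a detour, since the change-of-variables estimate $\lambda(C')\le \lambda(T^{\ell_p}(C'))\cdot\sup_{C'}1/(T^{\ell_p})'$ holds without any distortion control for $T^{\ell_p}$, and $\sup_{C'}1/(T^{\ell_p})'$ factors through the chain rule into a product of per-block suprema, each already controlled by per-block distortion \eqref{eq:dist} alone. So the global distortion claim, while not wrong, is not load-bearing in your argument. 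Two small further remarks: your pigeonhole step correctly leaves $p$ factors bounded by $\Lambda^{-1}$, giving $(p+1)^{\gamma/(\gamma-1)}\Lambda^{-p}$; and you correctly carry the exponent $\gamma/(\gamma-1)$ from \eqref{eq:estim_partition} throughout, where the paper's proof displays $1/(\gamma-1)$ in a few places (evidently typographical, as the stated bounds have $\gamma/(\gamma-1)$).
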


\begin{proof} The first upper bound in \eqref{eq:cylinder_estim-1} follows readily from \eqref{eq:estim_partition}. To show the second bound in \eqref{eq:cylinder_estim-1},
let $x, x' \in C_{n}(\ell_0, \ldots, \ell_p)$, $p \ge 1$. For $-1 \le j < p$ with $\ell_{-1} = 0$, we have 
	\begin{align*}
	| T^{\ell_j} (x) - T^{\ell_j}  ( x' ) | \le \sup_{ \xi \in Y} \frac{1}{ (  T^{ \ell_{j+1}  - \ell_j }  )' \circ ( T^{ \ell_{j+1} - \ell_j } |_{\delta_{ \ell_{j+1} - \ell_j }  } )^{-1}(\xi) }
	| T^{\ell_{j+1}} (x) - T^{\ell_{j+1}} (x') |.
	\end{align*}
	Combining \eqref{eq:unif_exp}, \eqref{eq:dist}, we find that 
	$$
	\sup_{ \xi \in Y} \frac{1}{ (  T^{ \ell_{j+1} - \ell_j }  )' \circ ( T^{ \ell_{j+1} - \ell_j } |_{\delta_{ \ell_{j+1} - \ell_j }  } )^{-1}(\xi) } \le 
	\min\{ \Lambda^{-1}, C_\gamma \lambda(  \delta_{ \ell_{j+1} - \ell_j } )   \}.
	$$
	Further, by \eqref{eq:estim_partition}, we have that 
	$\lambda(  \delta_{ \ell_{j+1} - \ell_j } ) \le C_\gamma ( \ell_{j+1} - \ell_j )^{ - 1 / (  \gamma - 1 )}$. 
	Therefore, 
	\begin{align}\label{eq:ind_cylinder}
		\begin{split}
			&| T^{\ell_j} (x) - T^{\ell_j}  ( x' ) |  \\
			&\le \min \{  \Lambda^{-1},  C_\gamma  ( \ell_{j+1} - \ell_j )^{ - 1 / (  \gamma - 1 )} \} 
			| T^{\ell_{j+1}} (x) - T^{\ell_{j+1}} (x') |.
		\end{split}
	\end{align}
	The second upper bound in \eqref{eq:cylinder_estim-1} follows by iterating 
	\eqref{eq:ind_cylinder} $p + 1$ times.	
	
	It remains to verify \eqref{eq:cylinder_estim-2}. For $p = 0$, this follows from
	the first bound in \eqref{eq:cylinder_estim-1}, since $\ell_p > n$.
	If $p > 0$, observe that for a given cylinder 
	$C_{n}(\ell_0, \ldots, \ell_p)$, we must have either  $\ell_0 \ge n/(p+1)$ or 
	$\ell_{j+1} - \ell_j \ge n/(p+1)$ for some $0 \le j < p$. Thus, by the second upper bound 
	in \eqref{eq:cylinder_estim-1}, 
	$$
	\lambda (  C_{n}(\ell_0, \ldots, \ell_p)  ) \le C_\gamma \Lambda^{ - ( p - 1) } (p + 1)^{  1 / ( \gamma - 1)  }
	n^{ - 1 / ( \gamma - 1)} \le C_\gamma n^{ - 1 / ( \gamma - 1)}.
	$$
\end{proof}

\begin{prop}\label{prop:image_cylinder} If $0 \le \ell < n$, then
	\begin{align}\label{eq:cylinder_image}
	\lambda( T^{\ell}( C_n(\ell_0, \ldots, \ell_p) ) ) \le C_\gamma (n - \ell)^{ - \gamma / ( \gamma - 1) }.
	\end{align}
	
\end{prop}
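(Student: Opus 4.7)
The plan is to show that for any $0 \le \ell < n$ the forward image $T^\ell(C_n(\ell_0, \ldots, \ell_p))$ is contained in a single cylinder at time $n - \ell$, and then invoke the bound \eqref{eq:cylinder_estim-2}.

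First, I identify the unique index $j_0 \in \{0, 1, \ldots, p\}$ such that $\ell_{j_0 - 1} \le \ell < \ell_{j_0}$, using the convention $\ell_{-1} := 0$; such a $j_0$ exists because $0 \le \ell < n < \ell_p$. Next, for any $x \in C_n(\ell_0, \ldots, \ell_p)$, the successive return times of the orbit of $x$ to $Y$ are, by definition, $\ell_0 < \ell_1 < \cdots < \ell_p$. Setting $y := T^\ell(x)$, the successive return times of the orbit of $y$ to $Y$ are therefore $\ell_{j_0} - \ell < \ell_{j_0+1} - \ell < \cdots < \ell_p - \ell$, each a positive integer. Since $\ell_{p-1} \le n < \ell_p$, I get $\ell_{p-1} - \ell \le n - \ell < \ell_p - \ell$, so $L(n - \ell)(y) = p - j_0$. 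This yields the containment
\begin{equation*}
T^\ell(C_n(\ell_0, \ldots, \ell_p)) \subset C_{n - \ell}(\ell_{j_0} - \ell, \ell_{j_0 + 1} - \ell, \ldots, \ell_p - \ell),
\end{equation*}
and applying \eqref{eq:cylinder_estim-2} to the right-hand side produces
\begin{equation*}
\lambda(T^\ell(C_n(\ell_0, \ldots, \ell_p))) \le C_\gamma (n - \ell)^{-\gamma/(\gamma - 1)},
\end{equation*}
as desired.

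The argument is essentially bookkeeping, so no step poses a genuine obstacle. The only mild subtlety is distinguishing the boundary case $\ell = \ell_{j_0 - 1}$ (where $y \in Y$, so that $\tau^1(y)$ is interpreted as the first return starting from $Y$) from the generic case $\ell \in (\ell_{j_0 - 1}, \ell_{j_0})$ (where $y \notin Y$ but first enters $Y$ at time $\ell_{j_0} - \ell$); in both cases the claimed containment holds and the same cylinder on the right-hand side works. No additional estimation is required beyond \eqref{eq:cylinder_estim-2}.
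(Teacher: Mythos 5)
Your proof is correct and follows essentially the same approach as the paper: identify the index $j_0$ so that $T^\ell$ maps the cylinder $C_n(\ell_0,\ldots,\ell_p)$ into the coarser cylinder $C_{n-\ell}(\ell_{j_0}-\ell,\ldots,\ell_p-\ell)$, then invoke \eqref{eq:cylinder_estim-2}. The only cosmetic difference is that the paper treats $p=0$ with $\ell>0$ as a separate case, observing the tighter containment $T^\ell(C_n(\ell_0))\subset\Delta_{\ell_0-\ell}$ and invoking \eqref{eq:estim_partition} directly, whereas your uniform convention $\ell_{-1}=0$ subsumes this case into the general argument; both yield the same bound.
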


\begin{proof} The case $\ell = 0$ is clear from \eqref{eq:cylinder_estim-2}. If $p = 0$ and $\ell > 0$, 
we have $T^{ \ell } (  C_{n}(\ell_0) ) \subset  \Delta_{\ell_0 - \ell}$, so the desired bound 
follows from \eqref{eq:estim_partition}. Now, consider the case $p > 0$. We observe that 
	\begin{align*}
		&T^\ell ( C_{n}(\ell_0, \ldots, \ell_p )  ) \\
		&\subset 
		\begin{cases}
			\{  \tau^1 = \ell_{j+1} - \ell, \ldots,  \tau^{ p + 1 - j } = \ell_p - \ell   \}, & \ell_j \le \ell < \ell_{j+1}, \: 0 \le j < p, \\
			\{  \tau^{1 } = \ell_0 - \ell, \ldots,   \tau^{p+1} = \ell_p - \ell \}, &\ell < \ell_0,
		\end{cases}\\
		&=\begin{cases}
			C_{n - \ell}( \ell_{j + 1} - \ell, \ldots, \ell_p - \ell ), & \ell_j \le \ell < \ell_{j+1}, \: 0 \le j < p, \\
			C_{n - \ell}( \ell_{0} - \ell, \ldots, \ell_p - \ell ), &\ell < \ell_0,
		\end{cases}
	\end{align*}
	where $\ell_{p-1} - \ell  \le n - \ell$ and 
	$\ell_p - \ell > n - \ell$. Therefore, \eqref{eq:cylinder_image} follows by 
	\eqref{eq:cylinder_estim-2}.
\end{proof}

\subsection{Verification of \eqref{eq:fcb}} We are now in a position to prove Lemma~\ref{lem:fcb_pikovsky}.
Let $0 \le i_1 < \ldots < i_n$ and $1 \le m < n$ be integers. Let $F : [ - \Vert f \Vert_\infty, \Vert f \Vert_\infty ]^n \to \bR$ be a separately Lipschitz continuous function, 
and define $H : M^2 \to \bR$ by
$$
H(x,y) = F( X_{i_1}(x), \ldots, X_{ i_m  }(x),   X_{ i_{ m + 1 }  }(y),  \ldots, X_{i_n}(y) ).
$$
Set
$$
i_* = \round{( i_{m + 1}  - i_m )/ 3} + i_m,
$$
and, without loss of generality, assume that $i_{m + 1}  - i_m  > 6$.
To establish Lemma \ref{lem:fcb_pikovsky}, we need to show that
\begin{align}
	\label{eq:toshow_fcb}
	\begin{split}
	&\biggl| \int_{  M  } H (x,x) \, d \lambda(x)  - \iint_{  M^2  } H (x,y) \, d \lambda(x) \, d \lambda(y) \biggr| \\ 
	&\le C_\gamma (  \Vert f \Vert_{ \Lip } + 1 )\Vert F \Vert_{ \Lip } (  i_{m + 1}  - i_m  )^{ - 1 / (  \gamma - 1 ) }.
	\end{split}
\end{align}
The proof consists of three steps.

\noindent\textbf{Step 1:} We decompose 
$$
M = \bigcup_{p = 0}^{ i_* } \bigcup_{\ell_0 < \ldots < \ell_p} C_{ i_* }(\ell_0, \ldots, \ell_p),
$$
and 
approximate 
$
x \mapsto H(x, y)
$
by a constant function on each cylinder. Here, the second union is taken over all integers 
$\ell_0 < \ldots < \ell_p$ as in \eqref{eq:cylinder}. 
For simplicity, in the sequel we omit the subscript 
$i_*$ and write $C(\ell_0, \ldots, \ell_p) = C_{ i_*}(\ell_0, \ldots, \ell_p)$.

For any $x,x' \in C(\ell_0, \ldots, \ell_p)$, $y \in M$:
\begin{align*}
	|H(x,y) - H(x',y)| &\le \sum_{j = 1}^m  \Vert F  \Vert_{ \Lip }  |  f(  T^{ i_j} x ) - f(  T^{ i_j} x' )   | \\
	&\le C_\gamma   \Vert F  \Vert_{ \Lip } \Vert f \Vert_{ \Lip }   \sum_{ \ell = 1 }^{ i_m  }  \lambda( T^{ \ell }  
	C(\ell_0, \ldots, \ell_p)
	 ) \\
	 &\le C_\gamma   \Vert F  \Vert_{ \Lip } \Vert f \Vert_{ \Lip }   \sum_{ \ell = 1 }^{ i_m  }  ( i_* - \ell  )^{ - \gamma / ( \gamma - 1) } \\
	 &\le C_\gamma   \Vert F  \Vert_{ \Lip } \Vert f \Vert_{ \Lip } (  i_{m+1} - i_m )^{ - 1/( \gamma - 1)  },
\end{align*}
where \eqref{eq:cylinder_image} was used in the second-to-last inequality. Therefore, 
\begin{align}\label{eq:step_1}
	\begin{split}
	&\int_{  M  } H (x,x) \, d \lambda(x)  - \iint_{  M^2  } H (x,y) \, d \lambda(x) \, d \lambda(y) \\ 
	&= \sum_{p=0}^{i_*} \sum_{\ell_0 < \ldots < \ell_p} \int_{ C(\ell_0, \ldots, \ell_p) } \biggl[
	H( c_{\ell_0 \cdots \ell_p}, x )
	-  \int   H( c_{\ell_0 \cdots \ell_p}, y ) \, d \lambda (y)  \biggr] \, d\lambda(x) \\ 
	&+O(    \Vert F  \Vert_{ \Lip } \Vert f \Vert_{ \Lip } (  i_{m+1} - i_m )^{ - 1/( \gamma - 1)  }  ),
	\end{split}
\end{align}
where the constant in the error term depends only on $\gamma$.

\noindent\textbf{Step 2:} We discard from the sum \eqref{eq:step_1} cylinders that 
are very small and approximate the error. To this end, note that for any $\ell > i_*$,
$$
\{  \tau^{ L(i_*) + 1 } > \ell  \} \subset \{  \tau(  T^{i_*} )  > \ell - i_* \} = T^{-i_*} \{ \tau > \ell - i_*  \}.
$$
Let 
$$
\ell_\# = i_{m} + \round{  2 ( i_{m+1}  - i_m  ) /3  }.
$$
Since $T$ preserves $\lambda$,
\begin{align}\label{eq:step2-1}
	\begin{split}
	&\sum_{p=0}^{i_*} \sum_{\ell_0 < \ldots < \ell_p, \, \ell_p > \ell_\#} 
	\lambda ( C(\ell_0, \ldots, \ell_p)  ) = \lambda ( \{   \tau^{ L(i_*) + 1 } > \ell_\# \}) \\
	&\le 
	(T^{n_*})_*\lambda(   \tau > \ell_\# - i_*  )\\
	& = \lambda( \tau >  \round{ 2( i_{m + 1}  - i_m ) / 3} -  \round{ ( i_{m + 1}  - i_m ) / 3}   ) \\
	& \le C_\gamma (  i_{m+1} - i_m )^{-1 / (\gamma - 1)},
	\end{split}
\end{align}
where \eqref{eq:estim_partition} was used in the last inequality. 

Combining \eqref{eq:step_1} and \eqref{eq:step2-1}, we arrive at the estimate
\begin{align}\label{eq:step_2-2}
	\begin{split}
		&\int_{  M  } H (x,x) \, d \lambda(x)  - \iint_{  M^2  } H (x,y) \, d \lambda(x) \, d \lambda(y) \\ 
		&= \sum_{p=0}^{i_*} \sum_{\ell_0 < \ldots < \ell_p, \, \ell_p \le \ell_{\#}} \int_{ C(\ell_0, \ldots, \ell_p) } \biggl[
		H( c_{\ell_0 \cdots \ell_p}, x )
		-  \int   H( c_{\ell_0 \cdots \ell_p}, y ) \, d \lambda (y)  \biggr] \, d\lambda(x) \\ 
		&+O(    \Vert F  \Vert_{ \Lip } (  \Vert f \Vert_{ \Lip }  + 1) (  i_{m+1} - i_m )^{ - 1/( \gamma - 1)  }  ).
	\end{split}
\end{align}

\noindent\textbf{Step 3:} Fix $0 < \ell_0 < \ldots < \ell_{p-1} \le i_* < \ell_p$ where $\ell_p \le \ell_\#$. 
(If $p = 0$, then $\ell_0 \le \ell_\#$.)
We replace $\lambda |_{  C(\ell_0, \ldots, \ell_p) } / \lambda (C(\ell_0, \ldots, \ell_p) )$ with 
$\lambda$ in the remaining integral and control the error using 
\eqref{eq:ml_pikovsky}. Set 
\begin{align}\label{eq:h_ls}
h_{  \ell_0 \cdots \ell_p } = \frac{ \mathbf{1}_{  C( \ell_0 \cdots \ell_p ) } }{ \lambda( C(\ell_0 \cdots \ell_p) ) },
\end{align}
and denote by $\tilde{H}( c_{\ell_0 \cdots \ell_p}, x  )$ the function that satisfies 
$\tilde{H}( c_{\ell_0 \cdots \ell_p},  T^{ i_{m + 1} } x  ) = H( c_{\ell_0 \cdots \ell_p},  x  )$.
Then,
\begin{align*}
	\cI &:= \int_{ C(\ell_0, \ldots, \ell_p) } \biggl[
	H( c_{\ell_0 \cdots \ell_p}, x )
	-  \int_M   H( c_{\ell_0 \cdots \ell_p}, y ) \, d \lambda (y)  \biggr] \, d\lambda(x) \\
	&=  \lambda(  C(\ell_0, \ldots, \ell_p) ) \int_M  \biggl[
	\tilde{H}( c_{\ell_0 \cdots \ell_p}, x )
	-  \int_M   H( c_{\ell_0 \cdots \ell_p}, y ) \, d \lambda (y)  \biggr] \circ T^{ i_{m+1} } 
	\cdot h_{  \ell_0 \cdots \ell_p } (x)  \, d \lambda (x) \\
	&=  \lambda(  C(\ell_0, \ldots, \ell_p)  ) \int_M  \biggl[
	\tilde{H}( c_{\ell_0 \cdots \ell_p}, x )
	-  \int_M   H( c_{\ell_0 \cdots \ell_p}, y ) \, d \lambda (y)  \biggr] \circ T^{ i_{m+1} } 
	\cdot ( h_{  \ell_0 \cdots \ell_p } - 1)(x)  \, d \lambda (x) \\
	&=  \lambda(  C(\ell_0, \ldots, \ell_p) ) \int_M  \biggl[
	\tilde{H}( c_{\ell_0 \cdots \ell_p}, x )
	-  \int_M   H( c_{\ell_0 \cdots \ell_p}, y ) \, d \lambda (y)  \biggr] 
	\cdot  \cL^{ i_{m+1} } ( h_{  \ell_0 \cdots \ell_p } - 1)(x)  \, d \lambda (x).
\end{align*}
Consequently,
\begin{align*}
	\cI \le   \Vert F \Vert_{ \Lip }   \lambda(  C(\ell_0, \ldots, \ell_p) )  
	\Vert    \cL^{ i_{m+1} - \ell_p } \cL^{  \ell_p } ( h_{  \ell_0 \cdots \ell_p } ) - 1  \Vert_{L^1( \lambda )}.
\end{align*}
The density $\cL^{\ell_p} h_{  \ell_0 \cdots \ell_p }$ is supported on $Y$, and
$$
\cL^{\ell_p} h_{  \ell_0 \cdots \ell_p }(y) = \frac{1}{\lambda( C(\ell_0, \ldots, \ell_p) )} \frac{1}{ (
	T^{ \ell_p } )' y_{  \ell_0 \cdots \ell_p } },
$$
where $y_{  \ell_0 \cdots \ell_p }$ is the unique preimage of $y \in Y$ in 
$C(\ell_0, \ldots, \ell_p)$ under $T^{\ell_p}$. Let $y, z \in Y$.
Using the chain rule, we see that 
\begin{align*}
	&|  \log \cL^{\ell_p} h_{  \ell_0 \cdots \ell_p }(y) - \log \cL^{\ell_p} h_{  \ell_0 \cdots \ell_p }(z) | \\
	&\le | \log (T^{\ell_0})' y_{  \ell_0 \cdots \ell_p } - (T^{\ell_0})' z_{  \ell_0 \cdots \ell_p }  | \\
	&+ \sum_{j=1}^{p} | \log  (T^{ \ell_j - \ell_{j-1} }  )'( T^{\ell_{j-1}}  y_{  \ell_0 \cdots \ell_p }  )  - \log 
	(T^{ \ell_j - \ell_{j-1} }  )'( T^{\ell_{j-1}}  z_{  \ell_0 \cdots \ell_p }  )   |,
\end{align*}
where the sum vanishes if $p = 0$. For $x \in \{  y_{  \ell_0 \cdots \ell_p } ,  z_{  \ell_0 \cdots \ell_p }   \}$, we have that 
$T^{\ell_{j-1}}  x  \in \delta_{ \ell_{j} - \ell_{j-1} }$ if $1 \le j \le p$, 
and $x \in \Delta_{\ell_0} \cup \delta_{\ell_0}$. Therefore, using \eqref{eq:dist} and \eqref{eq:unif_exp}, 
we obtain 
\begin{align*}
	&|  \log \cL^{\ell_p} h_{  \ell_0 \cdots \ell_p }(y) - \log \cL^{\ell_p} h_{  \ell_0 \cdots \ell_p }(z) | \\
	&\le  C_\gamma \sum_{j=0}^p |  T^{\ell_j} ( y_{  \ell_0 \cdots \ell_p }  ) -  T^{\ell_j} ( z_{  \ell_0 \cdots \ell_p }  )   | \le C_\gamma \sum_{j = 0}^p  \Lambda^{  - p + j } |y - z | 
	\le C_\gamma |y - z|.
\end{align*}
We conclude that for sufficiently large $K_2 > 0$ depending only on $\gamma$, the 
measure 
$$
\mu:= ( T^{ \ell_p} )_* ( \lambda |_{  C(\ell_0, \ldots, \ell_p) } ) / C(\ell_0, \ldots, \ell_p)
$$ 
satisfies $| \mu |_{\LL} \le K_2$. In particular, since $\mu$ is a probability measure, 
$d \mu / d m$ is bounded, so that 
\eqref{eq:estim_partition} implies the tail bound 
$$\mu( \tau \ge n ) \le C_\gamma m( \tau \ge n) \le C_\gamma n^{ - 1 / (\gamma - 1)}.$$
Moreover, from \eqref{eq:K1} we see that $\mu$ satisfies \eqref{eq:regular_measure}.
Now, by
\begin{align}\label{eq:ml_appli}
\begin{split}
&\Vert    \cL^{ i_{m+1} - \ell_p } \cL^{  \ell_p } ( h_{  \ell_0 \cdots \ell_p } ) - 1  \Vert_{L^1( \lambda )} 
\le C_\gamma (   i_{m+1} - \ell_p )^{  - 1 / ( \gamma - 1)  } \\
&\le C_\gamma (   i_{m+1} - \ell_\# )^{  - 1 / ( \gamma - 1)  } \le C_\gamma (   i_{m+1} - i_m )^{  
	- 1 / ( \gamma - 1) }.
\end{split}
\end{align}
We arrive at the upper bound
\begin{align}\label{eq:estim_I}
|\cI| \le  \Vert F \Vert_{ \Lip } \lambda(  C( \ell_0, \ldots, \ell_p)  )   
C_\gamma (   i_{m+1} - i_m )^{  - 1 / ( \gamma - 1) }.
\end{align}

\noindent\textbf{Conclusion:} Assembling \eqref{eq:step_2-2} and \eqref{eq:estim_I}, 
\begin{align*}
	&\biggl| 
	\int_{  M  } H (x,x) \, d \lambda(x)  - \iint_{  M^2  } H (x,y) \, d \lambda(x) \, d \lambda(y)
	\biggr| \\
	&\le   \sum_{p=0}^{i_*} \sum_{\ell_0 < \ldots < \ell_p, \, \ell_p \le \ell_{\#}}  \Vert F \Vert_{ \Lip } \lambda(  C( \ell_0, \ldots, \ell_p)  )   C_\gamma (   i_{m+1} - i_m )^{  - 1 / ( \gamma - 1) }.  \\
	&+ C_\gamma
	 \Vert F  \Vert_{ \Lip } (  \Vert f \Vert_{ \Lip }  + 1) (  i_{m+1} - i_m )^{ - 1/( \gamma - 1)  } \\
	 &\le C_\gamma   \Vert F  \Vert_{ \Lip } (  \Vert f \Vert_{ \Lip }  + 1) (  i_{m+1} - i_m )^{ - 1/( \gamma - 1)  }.
\end{align*}
This completes the proof of \eqref{eq:toshow_fcb}.

\subsection*{Data availability} No datasets were generated or analyzed in this study.

\appendix 

\section{Proof of Proposition \ref{prop:estim_I}}\label{sec:proof_estimates_Ei}

\noindent ($E_1$): Let $|i - n| = m$, $0 \le i < N$, $0 \le n,m < N$, $s \in [0,1]$.  By \eqref{eq:Dk_fml},
\begin{align*}
	&\mu ( X_n 
	X_i   \xi_{i,n,m}(s)  )  \\ 
	&= \mu \biggl\{  
	X_n X_i ( 
	f''(   W_{n,m} + s Y_{n,m}     )[   \theta_i  ,  \theta_n ] \,   - f''(   W_{n,m}    )[   \theta_i ,  \theta_n]
	)
	\biggr\} \\
	&= -    \int_0^\infty  e^{-2u}   \mu \biggl\{  X_n X_i \bfE \biggl(  g''(  (W_{n,m} + s Y_{n,m}) e^{-u} + \sigma(u) Z )[\theta_i, \theta_n] \\
	&-  g''(  W_{n,m} e^{-u} + \sigma(u) Z )[\theta_i, \theta_n] 
	\biggr)  \biggr\} \, d u \\
	&= -   \int_0^\infty  e^{-2u}    \mu \biggl\{  X_n X_i   H_g(  S_{n,m}, T_{n,m} ; e^{-u},se^{-u}, \sigma(u),n)  \biggr\}   \, d u
\end{align*}
where $H_g$ is defined as in \eqref{eq:Hg}. Hence, by (A2), 
\begin{align*}
	|E_1| &= \biggl| \sum_{n=0}^{N-1} \sum_{m=1}^{N-1}  \sum_{ \substack{|i - n| = m \\ 
			0 \le i < N } }  \int_0^1  \mu ( \tilde{X}_n 
	\tilde{X}_i   \xi_{i,n,m}(s)  )  \, ds  \biggr| \\ 
	&\le \sum_{n=0}^{N-1} \sum_{m=1}^{N-1}  \sum_{ \substack{|i - n| = m \\ 
			0 \le i < N } }  \int_0^1  
	B^{-1}  \int_0^\infty  e^{-2u}   \biggl| \mu \biggl\{  X_n X_i   H_g(  S_{n,m}, T_{n,m} ; e^{-u},se^{-u}, \sigma(u),n)  \biggr\} \biggr|   \, d u
	\, ds \\
	&\lesssim C_2 B^{-3/2} N \sum_{m=1}^{N-1} \rho(m).
\end{align*}

\noindent  ($E_2$): Recalling \eqref{eq:phi_bound}, we have that 
\begin{align*}
	|E_2| &= \biggl|  \sum_{n=0}^{N-1}      \int_0^1   \mu( |  \tilde{X}_n^2  \xi_{n, n,0}(s) |)   \, ds  \biggr| \\
	&\le B^{-1} \sum_{n=0}^{N-1}      \int_0^1  \mu(   | X_n^2   (   f''(   W_{n,0} + s Y_{n}    )[   \theta_n  ,  \theta_n ] \,   - f''(   W_{n,0}    )[   \theta_n  ,  \theta_n ] )  | )  \, ds  \\
	&\lesssim B^{-3/2} N  L^3 \Vert g \Vert_{\sM}.
\end{align*}

\noindent ($E_3$): As in the case of $E_1$, using \eqref{eq:Dk_fml} we see that 
\begin{align}\label{eq:repr_mu}
\mu ( X_n  X_i    \overline{\xi_{i, n,k}}   ) = - \int_0^\infty  e^{-2u}    \mu \biggl\{  X_n X_i   \overline{H_g(  S_{n,k}, T_{n,k} ; e^{-u},e^{-u}, \sigma(u),i, n)}  \biggr\}   \, d u.
\end{align}
Hence,
\begin{align*}
	&|E_3|  \\
	&=  \biggl| \sum_{n=0}^{N-1}  \sum_{m=1}^{N-1}   \sum_{k=m + 1}^{2m}    \sum_{ \substack{|i - n| = m \\ 
			0 \le i < N } } \mu ( 
	\tilde{X}_n   \tilde{X}_i    \overline{\xi_{i, n,k}}   ) \biggr|  \\
	&\le B^{-1} \sum_{n=0}^{N-1}  \sum_{m=1}^{N-1}   \sum_{k=m + 1}^{2m}   
	\sum_{ \substack{|i - n| = m \\ 
			0 \le i < N } }
	\int_0^\infty  e^{-2u}   \biggl|  \mu \biggl\{  X_n X_i   H_g(  S_{n,k}, T_{n,k} ; e^{-u},e^{-u}, \sigma(u),i, n)  \biggr\}  \biggr|  \, d u  \\
	&+ B^{-1} \sum_{n=0}^{N-1}  \sum_{m=1}^{N-1}   \sum_{k=m + 1}^{2m}   
	\sum_{ \substack{|i - n| = m \\ 
			0 \le i < N } }
	\int_0^\infty  e^{-2u}   \biggl|  
	\mu \biggl\{  H_g(  S_{n,k}, T_{n,k} ; e^{-u},e^{-u}, \sigma(u),i, n) \biggr\} \mu \biggl\{  X_n X_i   \biggr\}   \biggr|  \, d u  \\
	&= E_{3,1} + E_{3,2}.
\end{align*}
By (A2),
$$
E_{3,1}  \lesssim C_2 B^{-3/2} N  \Vert  g \Vert_{\sM} \sum_{m=1}^{N-1} m \rho(m).
$$
Further, 
\begin{align*}
E_{3,2} &\lesssim 
B^{-3/2}  L  \Vert g \Vert_\sM   \sum_{n=0}^{N-1}  \sum_{m=1}^{N-1}  m   
\sum_{ \substack{|i - n| = m \\ 
		0 \le i < N } }  |  \mu \{  X_n X_i   \}   | 
	\lesssim C_1 B^{-3/2} N   L  \Vert g \Vert_\sM  \sum_{m=1}^{N-1} m \rho(m),
\end{align*}
where \eqref{eq:bounds_Hg} and (A1) were used to obtained the first 
and second inequality, respectively.
Consequently,
$$
|E_3| \lesssim (C_1 L  + C_2) B^{-3/2} N  \Vert g \Vert_\sM  \sum_{m=1}^{N-1} m \rho(m).
$$

\noindent ($E_4$): Combining \eqref{eq:repr_mu} with (A3) yields 
\begin{align*}
	|E_4| &= \biggl| \sum_{n=0}^{N-1}  \sum_{m=1}^{N-1}   \sum_{k= 2m + 1}^{N - 1}    \sum_{ \substack{|i - n| = m \\ 
			0 \le i < N } } \mu (
	\tilde{X}_{n}   \tilde{X}_i     \overline{\xi_{i, n,k}} ) \biggr| \\
	&\le B^{-1} \sum_{n=0}^{N-1}  \sum_{m=1}^{N-1}   \sum_{k= 2m + 1}^{N - 1}    \sum_{ \substack{|i - n| = m \\ 
			0 \le i < N } } 
	\int_0^\infty  e^{-2u}   \biggl|   \mu \biggl\{  X_n X_i   \overline{H_g(  S_{n,k}, T_{n,k} ; e^{-u}, e^{-u}, \sigma(u),i, n)}  \biggr\}   \biggr| \, d u \\
	&\lesssim B^{-3/2} N  C_3 \Vert g \Vert_{\sM} \sum_{m=1}^{N-1} \sum_{k=2m + 1}^{N-1}  \rho(k -m)
	\lesssim B^{-3/2} N  C_3 \Vert g \Vert_{\sM} \sum_{m=1}^{N-1} m \rho (m).
\end{align*}

\noindent ($E_5$),  ($E_6$), ($E_7$): Similarly, we derive the following estimates for the remaining 
three terms. Their verification is left to the reader.
\begin{align*}
	|E_5| &=  \biggl| \sum_{n=0}^{N-1}    \sum_{k= 1}^{N - 1}   \mu (
	\tilde{X}_n^2     \overline{\xi_{n, n,k}}  ) \biggr| \lesssim B^{-3/2} N C_3 \Vert g \Vert_\sM 
	\sum_{k=1}^{N-1} \rho(k),\\
	|E_6| &= \biggl| 
	\sum_{n=0}^{N-1}  \sum_{m=1}^{N-1} \sum_{k=0}^m  \sum_{ \substack{|i - n| = m \\ 
			0 \le i < N } } \mu( \tilde{X}_n
	\tilde{X}_m  )   \mu(  \xi_{i,n,k}  )
	\biggr| \lesssim C_1 B^{-3/2} N L \Vert g \Vert_\sM   \sum_{m=1}^{N-1} m \rho(m), \\ 
	|E_7| &= \biggl| \sum_{n=0}^{N-1}  \mu( \tilde{X}_n^2 ) \mu(  \xi_{n,n,0}  ) \biggr|  
	\lesssim   B^{-3/2} N  L^3 \Vert g \Vert_{\sM}.
\end{align*}

The desired estimate in Proposition \ref{prop:estim_I} follows by combining the above 
estimates on $E_i$, $1 \le i \le 7$.

\bigskip
\bigskip
\bibliography{barbour}{}
\bibliographystyle{plainurl}

\vspace*{\fill}

\end{document}